\documentclass[12pt]{amsart}
\usepackage{graphicx}
\usepackage{amsmath}
\usepackage{amsfonts}
\usepackage{amssymb}
\usepackage{aliascnt}
\usepackage{setspace}
\usepackage{datetime}
\usepackage{color,enumitem,graphicx}
\usepackage[colorlinks=true,urlcolor=blue,
citecolor=red,linkcolor=blue,linktocpage,pdfpagelabels,
bookmarksnumbered,bookmarksopen]{hyperref}
\usepackage{geometry}
\allowdisplaybreaks[4]

\newtheorem{thm}{Theorem}[section]
\newaliascnt{cor}{thm}
\newtheorem{cor}[cor]{Corollary}
\aliascntresetthe{cor}

\newaliascnt{lem}{thm}
\newtheorem{lem}[lem]{Lemma}
\aliascntresetthe{lem}
\newaliascnt{prop}{thm}
\newtheorem{prop}[prop]{Proposition}
\aliascntresetthe{prop}
\theoremstyle{definition}
\newaliascnt{defn}{thm}
\newtheorem{defn}[defn]{Definition}
\aliascntresetthe{defn}
\theoremstyle{remark}
\newaliascnt{rem}{thm}

\aliascntresetthe{rem}
\numberwithin{equation}{section}

\newcommand{\be}{\begin{equation}}
	\newcommand{\ee}{\end{equation}}

\usepackage[nameinlink,noabbrev]{cleveref}

\crefname{thm}{Theorem}{Theorems}
\crefname{prop}{Proposition}{Propositions}
\crefname{lem}{Lemma}{Lemmas}
\crefname{cor}{Corollary}{Corollaries}
\crefname{defn}{Definition}{Definitions}
\crefname{section}{Section}{Sections}
\crefname{subsection}{Section}{Sections}

\newcommand{\supp}{\operatorname{supp}}
\newcommand{\osc}{\operatorname{osc}}
\newcommand{\dist}{\operatorname{dist}}
\newcommand{\Lip}{\operatorname{Lip}}
\newcommand{\quot}{\mathsf p_L}
\newcommand{\dd}{\,\mathrm d}

\begin{document}

\title[3-fold Symmetric Scale-Invariant Euler Flows] {Gradient Growth and Relaxation to Jump Profiles for 3-fold symmetric Scale-Invariant Euler Flows}

\author{Daomin Cao, Junhong Fan, Guolin Qin}
\address{State Key Laboratory of Mathematical Sciences, Academy of Mathematics and Systems Science, Chinese Academy of Sciences, Beijing 100190, P.R. China and University of Chinese Academy of Sciences, Beijing 100049, P.R. China}
\email{dmcao@amt.ac.cn}

\address{Institute of Applied Mathematics, AMSS, Chinese Academy of Sciences, Beijing 100190, and University of Chinese Academy of Sciences, Beijing 100049, P.R. China}
\email{fanjunhong@amss.ac.cn}

\address{State Key Laboratory of Mathematical Sciences, Academy of Mathematics and Systems Science, Chinese Academy of Sciences, Beijing 100190, P.R. China}
\email{qinguolin18@mails.ucas.ac.cn}

\begin{abstract}
We consider long-time behavior of the zero-homogeneous solutions with 3-fold symmetry to the two-dimensional Euler equation. This is the remaining case in the relaxation theory of Said, Elgindi, and Murray [Ann. Sci. \'{E}c. Norm. Sup\'{e}r. (4) \textbf{58} (2025), no.~4, 943--970], which treats $m$-fold symmetry solutions with $m\geq4$.  We prove that every nonconstant $W^{1,p}$ solution satisfies $\|g_\theta(t)\|_{L^p}\to\infty$ as $t\to\pm\infty$ for $1<p\leq\infty$.  For $p=1$, the total variation is conserved, but the $L\log L$ modular tends to infinity whenever it is initially finite.  If $D_\theta g_0$ is a summable sum of non-atomic one-sign components and atoms, every profile in the two omega-limit sets is a jump profile, and each half-orbit approaches its omega-limit set in $W^{\alpha,r}$ for $\alpha r<1$. For such data, every weak $L^2$ infinite-time limit generates a complete $L^2$-precompact orbit.  This structural assumption on $D_\theta g_0$ is automatic for $C^1$ data. In particular, these results answer the question concerning small-scale creation and compact orbit raised by Drivas and Elgindi [EMS Surv. Math. Sci. \textbf{10} (2023), no.~1, 1--100, Problem~5] for all nonconstant smooth 3-fold symmetric scale-invariant flows. Together with the known theory for $m\geq4$, they cover the full well-posed scale-invariant range $m\geq3$.
\end{abstract}

\maketitle

\noindent\textbf{Keywords:} Two-dimensional Euler equation; scale-invariant solutions; infinite-time gradient growth; relaxation to jump profiles.

\section{Introduction and main results}

\subsection{Background and the equation}

The vorticity formulation of the two-dimensional incompressible Euler equation is
\begin{equation}
 \partial_t\omega+u\cdot\nabla\omega=0,
 \qquad
 u=\nabla^\perp\Delta^{-1}\omega.
 \label{eq: 1.1}
\end{equation}
Classical global solvability for smooth two-dimensional Euler flows goes back to H\"older and Wolibner \cite{Holder,Wolibner}.  Yudovich established global well-posedness for bounded vorticity in sufficiently regular bounded planar domains \cite{Yudovich63}.  On $\mathbb R^2$ one usually assumes $\omega_0\in L^1\cap L^\infty$.  Although transport preserves the vorticity amplitude, it does not prevent the creation of small scales.  General growth and instability mechanisms were developed in \cite{Koch,MSY,Yudovich00}.

The hyperbolic construction of Bahouri and Chemin \cite{BC} became a basic mechanism for rapid small-scale creation.  Denisov constructed examples exhibiting infinite-time superlinear growth and arbitrarily long finite-time double-exponential amplification on the torus \cite{Denisov,Denisov2}. Kiselev and \v Sver\'{a}k obtained sharp double-exponential growth in the disk \cite{KS}.  Other examples include fast growth in symmetric smooth domains \cite{Xu}, exponential growth on the torus \cite{Zlatos}, and maximal double-exponential growth on the half-plane \cite{ZlatosHP}.  Chen and Sun obtained linear-in-time filamentation under arbitrarily small smooth nonnegative perturbations of nontrivial, nonnegative, compactly supported half-plane data \cite{ChenSun}.

In boundary-free settings, Choi and Jeong constructed linear gradient growth near the Lamb dipole \cite{CJ}, using its orbital stability \cite{AbeChoi}.  Jeong, Yao, and Zhou obtained superlinear growth on the torus and the plane \cite{JYZ}.  Related small-scale creation results for free-boundary Euler flows and gravity-capillary water waves appear in \cite{HLY,Tu}.

Shnirelman introduced generalized Lyapunov functionals for the infinite-time loss of smoothness \cite{Shnirelman97}.  Twisting provides another robust Lagrangian mechanism \cite{DEJ}, and a quantitative growth result for an initial-data-dependent norm of the Lagrangian flow was proved in \cite{SaidLag}.

Other long-time questions include wandering behavior \cite{Nadirashvili} and non-equilibrium scenarios \cite{Shnirelman}.  See \cite{DE} for a survey. Perturbative relaxation results include nonlinear inviscid damping near planar shear flows \cite{BM,IJCouette,IJdamp,MZ}, axi-symmetrization near a point vortex \cite{IJaxi}, and asymptotic stability at Yudovich regularity in an infinite channel \cite{GuoLuo}.

Following \cite[Definitions~1.3 and~1.4]{EMS}, we call a vorticity $\omega$ scale-invariant if
$$
 \omega(t,\lambda x)=\omega(t,x)
 \qquad\text{for all }\lambda>0,
$$
and $m$-fold symmetric if
$$
 \omega(t,\mathcal O_m x)=\omega(t,x),
$$
where $\mathcal O_m$ denotes the counterclockwise rotation by $2\pi/m$.

A scale-invariant, or equivalently zero-homogeneous, vorticity can be written as $\omega(t,r,\theta)=g(t,\theta)$.  Such a vorticity does not decay at spatial infinity, so discrete rotational symmetry supplies the cancellation needed for the whole-plane formulation.  Elgindi and Jeong proved global well-posedness for bounded $m$-fold symmetric vorticity for $m\geq3$, and derived the scale-invariant equation studied here \cite[Proposition~3.5]{EJ}.  Beyond the Yudovich class, local well-posedness has been established for a class of unbounded vorticities, together with examples that blow up in finite time and leave that well-posedness class \cite{EMSrough}.

For the zero-homogeneous ansatz, write the stream function as $\psi(t,r,\theta)=r^2G(t,\theta)$.  Then
$$
 \Delta\psi=G_{\theta\theta}+4G,
 \qquad
 u_r=-rG_\theta,
 \qquad
 u_\theta=2rG,
$$
and hence
$$
 (\partial_{\theta\theta}+4)G=g,
 \qquad
 g_t+2Gg_\theta=0.
$$
For an $m$-fold symmetric profile the angular frequencies are $mk$ with $ k \in\mathbb{Z} $. The case $m=2$ is resonant with the kernel of $\partial_{\theta\theta}+4$, whereas $m=3$ is the lowest nonresonant symmetry and the first case for which the reduced dynamics are uniquely defined without an additional normalization.

For symmetric vortex patches with corners at the origin, the well-posedness theory was developed in \cite{EJpatchI}, while the long-time dynamics including periodic motion, cusp formation, and spiraling was investigated in \cite{EJpatch}.  A different scale-invariant ansatz based on logarithmic spirals was studied in \cite{JS}. Related vortex sheets are treated in \cite{CKOexist,CKOinst,CKOwell}.

Drivas and Elgindi's Problem~5 asks for generic infinite-time singularity formation and convergence to compact orbits for smooth solutions of the scale-invariant equation \cite[Problem~5]{DE}.  In the compact-orbit conclusion considered here, a weak $L^2$ infinite-time limit is a profile $h$ for which
$$
 g(t_n)\rightharpoonup h\quad\text{weakly in }L^2(\mathbb T_L)
 \qquad\text{along some sequence }|t_n|\to\infty,
$$
and the conclusion is that the complete solution through $h$ has a precompact orbit in $L^2(\mathbb T_L)$.  The small-scale creation and compact-orbit conclusions arise from different mechanisms.  The growth result below is a full two-sided limit rather than a limsup statement, while the compact-orbit conclusion requires passing from convergence at isolated times to compactness along a complete limiting trajectory.

In the breakthrough work \cite{EMS}, for $m\geq4$, Said, Elgindi and Murray established a Lagrangian expansion--contraction principle and a detailed relaxation theory.  For every $m\geq3$, their whole-plane result concerns the $m$-fold symmetric subspace of $C^1(\mathbb R^2)\cap L^1(\mathbb R^2)$, and their spherical result the $m$-fold symmetric subspace of $C^1(\mathbb S^2)$. In each space, the data whose solutions diverge in $C^1$ as $t\to+\infty$ form a dense set of second category.  When $m=3$, however, the kernel underlying the relaxation argument is not sign definite.  As noted in \cite[Remark~2.1]{EMS}, the relaxation statements based on kernel positivity remain valid for 3-fold odd-symmetric data, but not for general 3-fold symmetric data.  In the normalization of \cite[Remark~2.3]{EMS}, the sector kernel on $[-\pi/3,\pi/3]$ is
\begin{equation}
K^3_{1/(\partial_{\theta\theta}+4)}(\theta)
 =\frac{3\pi}{10}\left|\sin\frac{3\theta}{2}\right|-\frac7{20}.
 \label{eq: 1.2}
\end{equation}
Here $K^3_{1/(\partial_{\theta\theta}+4)}$ denotes the 3-fold symmetrized convolution kernel of $(\partial_{\theta\theta}+4)^{-1}$ on $2\pi/3$-periodic profiles. Thus the kernel changes sign if $m=3$.  The four-point cross-ratio monotonicity developed below replaces the kernel positivity used for $m\geq4$.

In the case $m=3$ the positive and negative components of $D_\theta g$ are transported separately, while the cyclic order of their material labels is preserved.  An entropy identity forces two-sided growth of the angular derivative, including the $L\log L$ endpoint.  For relaxation, we examine a four-point cross-ratio in projective coordinates.  Its derivative has a positive Peano kernel representation, which yields the one- and two-atom bounds in \cref{thm: 1.3}.  Together with summability of the component masses, these bounds exclude both absolutely continuous and Cantor variation from the limiting profiles.  Finite decompositions lead to relaxation to profiles with finitely many jumps.  The summability condition is automatic for $C^1$ data.  In that case, compactness of time translates and invariance of the limit sets show that every weak infinite-time limit generates a complete solution with a precompact orbit.

Without symmetry restrictions, Alazard and Said \cite{AS} proved forward-time Sobolev growth on dense $G_\delta$ sets of zero-mean data, in the sense that the relevant Sobolev norms have infinite limsup.  Their results apply in $H^s(\mathbb T^2)$ for $s>4$ and in $H^s(\mathbb R^2)\cap L^1(\mathbb R^2)$ for $s>1$.  Here the angular derivative of every nonconstant $W^{1,p}$ profile, $1<p\leq\infty$, tends to infinity in both time directions.

In this paper, we are concerned with the remaining case $m=3$. For $m=3$, we establish that every nonconstant smooth angular profile has two-sided gradient growth, and every weak $L^2$ infinite-time limit generates an $L^2$-precompact complete orbit.  Thus we give an affirmative answer to Problem 5 for all nonconstant smooth data in the $m=3$ scale-invariant class \cite[Problem~5]{DE}, where its compact-orbit conclusion is the $L^2$-precompactness of the complete orbit through each weak $L^2$ infinite-time limit. Combined with the theory for $m\geq 4$ in \cite{EMS}, the gradient-growth and compact-orbit conclusions hold for every nonconstant smooth angular profile in the well-posed scale-invariant range $m\geq3$.

To be precise, we first introduce some definitions and notations. Set
\begin{equation}
 \mathbb T_L=\mathbb R/L\mathbb Z,
 \qquad
 L=\frac{2\pi}{3}.
 \label{eq: 1.3}
\end{equation}
All functions below are understood to be $L$-periodic in $\theta$. On the fundamental circle, the reduced equation is
\begin{equation}
 g_t+2Gg_\theta=0,
 \qquad
 (\partial_{\theta\theta}+4)G=g,
 \qquad \theta\in\mathbb T_L.
 \label{eq: 1.4}
\end{equation}
The Fourier frequencies on $\mathbb T_L$ are $3k$, $k\in\mathbb Z$. Consequently, for every $s\in\mathbb R$,
$$
 (\partial_{\theta\theta}+4):
 H^{s+2}(\mathbb T_L)\longrightarrow H^s(\mathbb T_L)
$$
is an isomorphism, and its inverse is well defined on periodic distributions. Write
\begin{equation}
 A:=\partial_{\theta\theta}+4,
 \qquad u:=2G,
 \qquad q:=D_\theta g.
 \label{eq: 1.5}
\end{equation}
Here and below, $D=D_\theta$ denotes distributional differentiation on $\mathbb T_L$. Let $\mathcal M(\mathbb T_L)$ denote the space of finite signed Radon measures, and $\|\cdot\|_{\mathcal M}$ be the total-variation norm.  If $T$ is a Borel map and $\mu$ is a measure, $T_\#\mu$ denotes the push-forward,
$$
 (T_\#\mu)(E):=\mu(T^{-1}(E)).
$$
For a finite set $E$, $\#E$ denotes its cardinality. Let $d_L$ denote the geodesic distance on $\mathbb T_L$.  For $0<\alpha<1$ and $1\leq r<\infty$, denote
$$
 [f]_{W^{\alpha,r}(\mathbb T_L)}^r
 :=\int_{\mathbb T_L}\int_{\mathbb T_L}
 \frac{|f(\theta)-f(\eta)|^r}
 {d_L(\theta,\eta)^{1+\alpha r}}\dd\theta\dd\eta.
$$
We set $\|f\|_{W^{\alpha,r}}:=\|f\|_{L^r}+[f]_{W^{\alpha,r}}$ and $W^{0,r}(\mathbb T_L):=L^r(\mathbb T_L)$. By differentiating \eqref{eq: 1.4} in distributions, we obtain
\begin{equation}
 q_t+(uq)_\theta=0,
 \qquad
 (\partial_\theta^3+4\partial_\theta)u=2q.
 \label{eq: 1.6}
\end{equation}

Let $\quot:\mathbb R\to\mathbb T_L$ be the quotient map, and let $\chi(t,\cdot):\mathbb T_L\to\mathbb T_L$ be the Lagrangian flow generated by $u$:
\begin{equation*}
    \partial_t\chi(t,\alpha)
 =u(t,\chi(t,\alpha)),
 \qquad
 \chi(0,\alpha)=\alpha\quad\text{for}\quad \alpha\in\mathbb T_L.
\end{equation*}
Identify $u$ with its $L$-periodic lift to $\mathbb R$, and let $\widehat\chi(t,\cdot)$ be the degree-one lift of $\chi(t,\cdot)$ determined by
\begin{equation}
 \partial_t\widehat\chi(t,\alpha)
 =u(t,\widehat\chi(t,\alpha)),
 \qquad
 \widehat\chi(0,\alpha)=\alpha,
 \qquad
 \widehat\chi(t,\alpha+L)=\widehat\chi(t,\alpha)+L.
 \label{eq: 1.7}
\end{equation}
Thus
$$
 \chi(t,\quot(\alpha))
 =\quot\bigl(\widehat\chi(t,\alpha)\bigr),
 \qquad \alpha\in\mathbb R,
$$
and $\chi(t,\cdot)$ is an orientation-preserving bi-Lipschitz homeomorphism of $\mathbb T_L$.  The transport and differentiated-transport identities are
\begin{equation}
 g(t)\circ\chi(t)=g_0,
 \qquad
 Dg(t)=\chi(t)_\#Dg_0.
 \label{eq: 1.8}
\end{equation}
If $g_0\in W^{1,1}(\mathbb T_L)$ and $J(t,\alpha)=\partial_\alpha\widehat\chi(t,\alpha)$, the measure identity in \eqref{eq: 1.8} can be reformulated as
\begin{equation}
 q(t,\widehat\chi(t,\alpha))J(t,\alpha)=q_0(\alpha).
 \label{eq: 1.9}
\end{equation}
Global uniqueness follows from \cite[Proposition~3.5]{EJ}. Although the cited result is stated forward in time, applying the involution $g(t,\theta)\mapsto-g(-t,\theta)$ to the forward solution with initial data $-g_0$ produces the negative-time branch.  Uniqueness then yields a complete solution.

\subsection{Main results}
Our first result concerns the growth of the angular derivative in both time directions.
\begin{thm}\label{thm: 1.1}
Let $p\in(1,\infty]$ and assume that
\begin{equation}
 g_0\in W^{1,p}(\mathbb T_L),
 \qquad
 g_0\ \text{is not a constant}.
 \label{eq: 1.10}
\end{equation}
Then the corresponding unique complete $W^{1,p}(\mathbb T_L)$ solution of \eqref{eq: 1.4} satisfies
\begin{equation}
 \lim_{t\to+\infty}\|g_\theta(t)\|_{L^p}=+\infty,
 \qquad
 \lim_{t\to-\infty}\|g_\theta(t)\|_{L^p}=+\infty.
 \label{eq: 1.11}
\end{equation}
In particular, if $g_0\in W^{1,\infty}(\mathbb T_L)$ is nonconstant, then \eqref{eq: 1.11} holds for every fixed $p\in(1,\infty]$.

At the endpoint, one has the following Orlicz refinement.  Set $\Phi(\xi):=\xi\log(e+\xi)$ for $\xi\geq0$.  If $g_0\in W^{1,1}(\mathbb T_L)$ is nonconstant and $D_{\theta}g_0$ satisfies
\begin{equation}
 \int_{\mathbb T_L}\Phi(|D_{\theta}g_0|)\dd\theta<\infty,
 \label{eq: 1.12}
\end{equation}
then the $L\log L$ modular of the angular derivative diverges in both time directions:
\begin{equation}
 \int_{\mathbb T_L}\Phi(|g_\theta(t)|)\dd\theta\longrightarrow\infty
 \qquad(t\to\pm\infty).
 \label{eq: 1.13}
\end{equation}
\end{thm}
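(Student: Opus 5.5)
The plan is to use a signed entropy identity, based on the fact that the positive and negative parts of $q=D_\theta g$ are transported separately. From \eqref{eq: 1.8}, $q(t)^\pm=\chi(t)_\#q_0^\pm$. For $g_0\in W^{1,1}$ with finite $L\log L$ modular, define
$$
S^\pm(t):=\int_{\mathbb T_L} q^\pm(t)\log q^\pm(t)\dd\theta .
$$
By \eqref{eq: 1.9}, $S^\pm(t)=\int q_0^\pm\log\bigl(q_0^\pm/J(t,\alpha)\bigr)\dd\alpha$. Since $g\in L^\infty$ is conserved, $G=A^{-1}g\in W^{2,\infty}$, so $u_\theta$ is Lipschitz, uniformly in time. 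Hence $\log J$ is bounded on bounded time intervals and $\partial_t\log J=u_\theta(t,\chi(t,\alpha))$. This shows that $S^\pm(t)$ is finite for all $t$ and gives the entropy identity
$$
\frac{\dd}{\dd t}S^\pm(t)=-\int q_0^\pm\,u_\theta(t,\chi(t,\alpha))\dd\alpha=-\int_{\mathbb T_L}u_\theta\,q^\pm(t)\dd\theta .
$$
I would verify this first in Lagrangian form, for data with $\Phi(|q_0|)$ integrable, by differentiating under the integral with dominated convergence.

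The key step is to compute the difference exactly:
$$
\frac{\dd}{\dd t}(S^+-S^-)=-\int u_\theta q\dd\theta=-2\int G_\theta g_\theta\dd\theta=2\int G_{\theta\theta}g\dd\theta=2\int g^2-8\int Gg .
$$
Both terms on the right are conserved. The $L^2$ norm of $g$ is conserved because $g$ is transported by the bi-Lipschitz, orientation-preserving flow with Jacobian factor handled as follows: the reduced equation has $\int g^2$ conserved since $\int 2G\,(g^2)_\theta=-\int 2G_\theta g^2$. That expression is not obviously zero, so instead I would check conservation directly from the 2D Euler structure. The intended reading is that the combination $\int g^2-4\int gG$ (an energy-type quantity) is conserved. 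This can be checked by differentiating in time and using $g_t=-ug_\theta$ together with the self-adjointness of $A^{-1}$.

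Verifying that this quantity is a constant of motion is the main obstacle. If conservation of the exact combination fails, the fallback is to show that its time average is bounded away from zero using the Fourier sign computed below.

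In Fourier variables on $\mathbb T_L$, we have $g_k=(4-9k^2)G_k$. This gives
$$
\int g^2-4\int gG=L\sum_{k\ne0}|g_k|^2\,\frac{9k^2}{9k^2-4}\cdot\Bigl(\text{sign}\Bigr),
$$
which is strictly nonzero and of definite sign for nonconstant $g$, since every $k\neq0$ gives $9k^2-4>0$. Thus $S^+(t)-S^-(t)=a+c\,t$ with $c\ne0$; replacing $g$ by $-g$ if needed, we may take $c>0$.

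Since $q^\pm$ have fixed masses $m_\pm$ on a circle of length $L$, Jensen's inequality gives $S^\pm\ge m_\pm\log(m_\pm/L)$, a lower bound independent of $t$. Therefore $S^+(t)\ge a+ct+\inf S^-\to\infty$ as $t\to+\infty$, and $S^-(t)\ge -a-ct+\inf S^+\to\infty$ as $t\to-\infty$. In either time direction, $\int|q|\log|q|\to\infty$, and since $\Phi(\xi)\ge\xi\log\xi$, this proves \eqref{eq: 1.13}.

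For $1<p<\infty$, I would use the elementary bound $\xi\log^+\xi\le C_p\,\xi^p$ and the bounded negative part $\int|q|\log^-|q|\le L/e$. These give $\|q(t)\|_{L^p}^p\ge C_p^{-1}\bigl(\int|q|\log|q|-L/e\bigr)\to\infty$. Data in $W^{1,p}$ with $p>1$ automatically satisfy \eqref{eq: 1.12}, so this case follows from the first. The case $p=\infty$ follows from the $L^p$ growth for any finite $p$ on the finite-measure space $\mathbb T_L$, and the last sentence of the theorem is then immediate.
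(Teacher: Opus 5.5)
There is a genuine gap at the central step. Your identity $\frac{\dd}{\dd t}(S^+-S^-)=-\int u_\theta q\dd\theta=2\int g^2-8\int gG$ is correct, and $S^+-S^-$ is exactly the paper's entropy $\mathcal H=\int q\log|q|$. Your Fourier computation is also right: $\mathcal H'=2L\sum_{k\neq0}|g_k|^2\,9k^2/(9k^2-4)>0$, which is equivalent to the paper's $\mathcal H'=2\int(G_{\theta\theta}^2-4G_\theta^2)\geq10\|G_\theta\|_2^2$.

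The argument breaks at the next step: $\int g^2-4\int gG$ is not a constant of motion, so $S^+-S^-=a+ct$ is false. The angular flow is compressible, since $u_\theta=2G_\theta$. From $g_t=-2Gg_\theta$ and self-adjointness of $A^{-1}$ one gets $\frac{\dd}{\dd t}\int g^2=2\int G_\theta g^2$ and $\frac{\dd}{\dd t}\int gG=8\int GG_\theta g=-4\int G_\theta^3$. Hence
\[
\frac{\dd}{\dd t}\Bigl(\int g^2-4\int gG\Bigr)=2\int G_\theta G_{\theta\theta}^2+8\int G_\theta^3,
\]
which does not vanish identically. For $g=\frac53\cos3\theta-\frac{16}{3}\sin6\theta$, so that $G_\theta=\sin3\theta+\cos6\theta$, it equals $-13\pi$. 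The 2D Euler structure does not rescue this. Planar area preservation is compensated by the radial stretching $\dot r=-rG_\theta$ and gives no conserved angular $L^2$ quantity; only the mean $\int g$ is conserved, cf. \eqref{eq: 4.8}. So $\mathcal H$ is only known to be nondecreasing. The linear growth, the resulting $S^\pm\to\infty$, and the $L^p$ conclusions built on it all lose their basis.

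Your fallback, a positive lower bound on time averages of $\mathcal H'$, is precisely the missing idea, and it is not available a priori. Nothing prevents $g(t)$ from approaching a constant, in which case $\|G_\theta(t)\|_2\to0$. The three-jump solutions of Theorem~\ref{thm: 1.8} that exit through a vertex of the simplex do exactly this.

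The paper avoids needing any such bound by arguing by contradiction:
\begin{enumerate}
\item Suppose $\|q(t_n)\|_p\leq M$ (resp.\ $\int\Phi(|q(t_n)|)\leq M$) along some $t_n\to+\infty$. Then $\mathcal H(t_n)$ is bounded, monotonicity bounds $\mathcal H$ above on $[0,\infty)$, and so $\int_0^\infty\|G_\theta\|_2^2\dd t<\infty$.
\item The Jacobian bounds $e^{-\Lambda|t-s|}\leq J\leq e^{\Lambda|t-s|}$ propagate the bound at $t_n$ to windows $|t-t_n|\leq\delta$.
\item On these windows $g(t)$ lies in a $C^0$-precompact family. The common modulus of continuity comes from $\|q\|_p$, or from uniform integrability in the $L\log L$ case. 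Every member of this family has oscillation $\osc g_0>0$.
\item The map $f\mapsto\partial_\theta A^{-1}f$ is continuous from $C^0$ to $L^2$ and vanishes only on constants. Hence $\|G_\theta\|_2\geq c>0$ on every window, and disjoint windows force $\int_0^\infty\|G_\theta\|_2^2\dd t=\infty$, a contradiction.
\end{enumerate}
This gives the full limits \eqref{eq: 1.11} and \eqref{eq: 1.13} without any growth statement for $\mathcal H$ itself. Negative times follow from the symmetry $g(t)\mapsto-g(-t)$. Note that $g\mapsto-g$ at fixed time, which you invoke, is not a symmetry of the equation.
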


For absolutely continuous data, \eqref{eq: 1.9} and a change of variables show that $\|g_\theta(t)\|_{L^1}=\|D_{\theta}g_0\|_{L^1}$ for all $t\in\mathbb R$, so the assumption that $p>1$ in the Lebesgue-scale statement is sharp.

\begin{cor}
\label{cor: 1.2}
Let $1\leq p\leq\infty$, let $g$ be the complete solution of \eqref{eq: 1.4} with $g(0)=g_0\in W^{1,p}(\mathbb T_L)$, and let $\omega(t,r,\theta)=g(t,\theta)$ be its zero-homogeneous Euler lift.  Then for every $0<a<b<\infty$ and $p<\infty$,
\begin{equation}
 \|\nabla\omega(t)\|_{L^p(\{a<|x|<b\})}^p
 =3\left(\int_a^b r^{1-p}\dd r\right)
   \|g_\theta(t)\|_{L^p(\mathbb T_L)}^p,
 \label{eq: 1.14}
\end{equation}
while
\begin{equation}
 \|\nabla\omega(t)\|_{L^\infty(\{a<|x|<b\})}
 =\frac1a\|g_\theta(t)\|_{L^\infty(\mathbb T_L)}.
 \label{eq: 1.15}
\end{equation}
Thus, for nonconstant $g_0\in W^{1,p}(\mathbb T_L)$ and $1<p\leq\infty$, the corresponding annular norm tends to $+\infty$ as $t\to\pm\infty$. When $p=1$, it is instead conserved and equals $3(b-a)\|D_{\theta}g_0\|_{L^1(\mathbb T_L)}$.
\end{cor}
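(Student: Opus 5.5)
The corollary reduces to a change of variables in polar coordinates together with \cref{thm: 1.1}, so the plan is to compute $\nabla\omega$ for the zero-homogeneous lift and integrate over the annulus. Write $\omega(t,x)=\tilde g(t,\theta(x))$, where $\tilde g$ is $g$ viewed as a $2\pi$-periodic function (automatically $2\pi/3$-periodic). Since $\partial_r\omega=0$ and $\frac1r\partial_\theta\omega=\frac1r g_\theta$, we get, almost everywhere on $\{a<|x|<b\}$,
$$|\nabla\omega(t,x)|=\frac{|g_\theta(t,\theta)|}{r}.$$
This holds in the weak sense for $g(t)\in W^{1,p}(\mathbb T_L)$. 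On the annulus (away from the origin) the polar map is a smooth diffeomorphism locally, so the weak gradient of $\omega$ is the pullback of the weak $\theta$-derivative, and $\omega\in W^{1,p}_{\mathrm{loc}}$ there.

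For $p<\infty$, Fubini in polar coordinates gives
$$\|\nabla\omega(t)\|_{L^p(\{a<|x|<b\})}^p=\int_a^b\int_0^{2\pi}\frac{|g_\theta(t,\theta)|^p}{r^p}\,r\dd\theta\dd r=\Bigl(\int_a^b r^{1-p}\dd r\Bigr)\int_0^{2\pi}|g_\theta(t,\theta)|^p\dd\theta .$$
The angular integral over $[0,2\pi]$ equals three copies of the integral over a fundamental period of length $L=2\pi/3$, which yields \eqref{eq: 1.14}.

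For $p=\infty$, the essential supremum of the product $r^{-1}|g_\theta(\theta)|$ over the product set $(a,b)\times\mathbb T$ equals $\sup_{(a,b)}r^{-1}\cdot\operatorname{ess\,sup}|g_\theta|=a^{-1}\|g_\theta\|_{L^\infty}$. The only point to check is that the supremum $1/a$ is approached on a set of positive measure in $r$ near $a$, so the product ess sup is indeed attained in the limit. This gives \eqref{eq: 1.15}.

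The consequences follow directly. For $1<p\le\infty$ and nonconstant data, the prefactors in \eqref{eq: 1.14}–\eqref{eq: 1.15} are finite positive constants independent of $t$, so \eqref{eq: 1.11} from \cref{thm: 1.1} transfers to the annular norms. For $p=1$, note that $\int_a^b r^0\dd r=b-a$. Then use conservation $\|g_\theta(t)\|_{L^1}=\|D_\theta g_0\|_{L^1}$, which comes from \eqref{eq: 1.9} and the change of variables $\theta=\widehat\chi(t,\alpha)$ with $J>0$, since $\chi(t)$ is an orientation-preserving bi-Lipschitz homeomorphism. This gives the constant value $3(b-a)\|D_\theta g_0\|_{L^1}$.

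There is no real obstacle here. The only mildly delicate step is justifying the chain rule for the weak gradient of $\omega$ in Sobolev regularity and the ess-sup identity, and both are standard.
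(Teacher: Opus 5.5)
Your proposal is correct and matches the paper's proof: you write $\nabla\omega=r^{-1}g_\theta\,e_\theta$ in polar coordinates, integrate over the annulus using the three copies of $\mathbb T_L$ in the full circle, and then invoke \cref{thm: 1.1} together with the $L^1$ conservation coming from \eqref{eq: 1.9}. Your remarks on the weak chain rule away from the origin and on the essential-supremum identity for $p=\infty$ only make explicit details that the paper leaves implicit.
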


\begin{proof}
In polar coordinates, $\nabla\omega=r^{-1}g_\theta\,e_\theta$.  Integrating in $r$ and using the three copies of $\mathbb T_L$ in the full angular circle, we obtain \eqref{eq: 1.14}--\eqref{eq: 1.15}. The remaining assertions follow from \cref{thm: 1.1} and conservation of $\|g_\theta(t)\|_{L^1}$.
\end{proof}

We will use $\delta_b$ to denote the unit Dirac mass at $b$ and $\mu\llcorner I$ to denote the restriction of $\mu$ to $I$.

\begin{defn}\label{def: 1.1}
We say that $g_0\in BV(\mathbb T_L)$ admits a finite decomposition if there exist nonnegative integers $M_+$, $M_-$, and $K$, families
$$
 \{(\mu_j^+,I_j^+)\}_{j=1}^{M_+},\qquad
 \{(\mu_j^-,I_j^-)\}_{j=1}^{M_-},\qquad
 \{(c_k,b_k)\}_{k=1}^{K},
$$
and a representation
\begin{equation}
 Dg_0=
 \sum_{j=1}^{M_+}\mu_j^+
 -\sum_{j=1}^{M_-}\mu_j^-
 +\sum_{k=1}^{K}c_k\delta_{b_k},
 \label{eq: 1.16}
\end{equation}
where the following conditions hold:
\begin{enumerate}[label=(\alph*)]
\item each $\mu_j^\pm$ is a nonzero, non-atomic, finite positive measure.
\item each $I_j^\pm$ is a nondegenerate oriented open arc, and all the arcs in the two families $\{I_j^+\}_{j=1}^{M_+}$ and $\{I_j^-\}_{j=1}^{M_-}$ are pairwise disjoint.  The measure $\mu_j^\pm$ is concentrated on $I_j^\pm$.  We fix a lift
$$
 I_j^\pm=\quot((a,b)),
 \qquad a<b\leq a+L.
$$
The endpoint case $b=a+L$ represents the circle with one cut point removed.
\item the restrictions of $Dg_0$ to these arcs satisfy
$$
 Dg_0\llcorner I_j^+=\mu_j^+,
 \qquad
 Dg_0\llcorner I_j^-=-\mu_j^-.
$$
\item $c_k\neq0$, the points $b_k$ are distinct and lie outside all the open arcs $I_j^\pm$, and atoms at the same point have been combined.
\end{enumerate}
Here ``concentrated on'' means that the complement of the indicated open arc has zero measure.  This convention does not require the topological support to avoid the endpoints.
\end{defn}

Define
\begin{equation}
 m_j^\pm:=\mu_j^\pm(\mathbb T_L),
 \qquad
 N_+:=K+M_++2M_-,
 \qquad
 N_-:=K+2M_++M_-.
 \label{eq: 1.17}
\end{equation}
The conclusions below hold for every fixed decomposition \eqref{eq: 1.16}.  When several decompositions are available, the forward and backward bounds may be optimized separately by minimizing $N_+$ and $N_-$, respectively.

For $B\geq0$ and an integer $N\geq0$, define
\begin{equation}
 \mathcal P_N(B):=
 \left\{f\in L^\infty(\mathbb T_L):
 \|f\|_\infty\leq B,\quad
 Df\in\mathcal M(\mathbb T_L),\quad
 \#\supp Df\leq N\right\}.
 \label{eq: 1.18}
\end{equation}
Thus $\mathcal P_N(B)$ consists of bounded step functions with at most $N$ jumps, together with the constant functions in $\mathcal P_0(B)$.  Under the zero-homogeneous lift, such a profile is piecewise constant on angular sectors separated by finitely many rays.  Let
\begin{equation}
 \nu_j^\pm(t):=\chi(t)_\#\mu_j^\pm.
 \label{eq: 1.19}
\end{equation}
For $m>0$ and an integer $\ell\geq1$, we define the following set of measures
\begin{equation}
 \mathfrak A_\ell(m):=
 \{\nu\geq0:\nu(\mathbb T_L)=m,\ \#\supp\nu\leq \ell\}.
\end{equation}
We use the bounded-Lipschitz distance
\begin{equation}
 d_{\mathrm{BL}}(\mu,\nu):=
 \sup_{\substack{\varphi\in C^{0,1}(\mathbb T_L)\\
                  \|\varphi\|_\infty+\Lip(\varphi)\leq1}}
 \left|\int_{\mathbb T_L}\varphi\dd(\mu-\nu)\right|,
 \label{eq: 1.21}
\end{equation}
where $\Lip(\varphi)$ is computed with the geodesic distance on $\mathbb T_L$.  For a set $\mathcal A$ of measures, define
$$
 d_{\mathrm{BL}}(\mu,\mathcal A)
 :=\inf_{\nu\in\mathcal A}d_{\mathrm{BL}}(\mu,\nu).
$$
On the compact circle, $d_{\mathrm{BL}}$ metrizes weak-* convergence on each set of positive measures with fixed total mass.

More generally, if $X$ is a normed space, $f\in X$, and $\mathcal E\subset X$, we write
$$
 \dist_X(f,\mathcal E):=\inf_{h\in\mathcal E}\|f-h\|_X.
$$

\begin{thm}
\label{thm: 1.3}
Let $g_0\in BV(\mathbb T_L)$ admit \eqref{eq: 1.16}, and let $g$ be the corresponding complete solution of \eqref{eq: 1.4}.  Then each assertion involving $\nu_j^+$ holds for $1\leq j\leq M_+$, and each assertion involving $\nu_j^-$ holds for $1\leq j\leq M_-$:
\begin{align}
 d_{\mathrm{BL}}\bigl(\nu_j^+(t),\mathfrak A_1(m_j^+)\bigr)&\longrightarrow0,
 &
 d_{\mathrm{BL}}\bigl(\nu_j^-(t),\mathfrak A_2(m_j^-)\bigr)&\longrightarrow0
 &&(t\to+\infty),
 \label{eq: 1.22}\\
 d_{\mathrm{BL}}\bigl(\nu_j^-(t),\mathfrak A_1(m_j^-)\bigr)&\longrightarrow0,
 &
 d_{\mathrm{BL}}\bigl(\nu_j^+(t),\mathfrak A_2(m_j^+)\bigr)&\longrightarrow0
 &&(t\to-\infty).
 \label{eq: 1.23}
\end{align}
Equivalently, every sequence tending to $+\infty$ has a common subsequence on which all positive components converge to single atoms of the correct masses and all negative components converge to measures supported on at most two points.  The signs exchange roles at $-\infty$.

Moreover, for every $1\leq r<\infty$ and $0\leq\alpha<1/r$,
\begin{align}
 \dist_{W^{\alpha,r}}\bigl(g(t),\mathcal P_{N_+}(\|g_0\|_\infty)\bigr)
 &\longrightarrow0 &&(t\to+\infty),
 \label{eq: 1.24}\\
 \dist_{W^{\alpha,r}}\bigl(g(t),\mathcal P_{N_-}(\|g_0\|_\infty)\bigr)
 &\longrightarrow0 &&(t\to-\infty).
 \label{eq: 1.25}
\end{align}
Each set $\mathcal P_N(B)$ is invariant under the flow and compact in $W^{\alpha,r}(\mathbb T_L)$ throughout this range.  Thus \eqref{eq: 1.24}--\eqref{eq: 1.25} hold along the entire positive and negative half-orbits, not only along subsequences.
\end{thm}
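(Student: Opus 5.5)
The plan is to exploit the projective structure hidden in \eqref{eq: 1.6}. The kernel of $\partial_\theta^3+4\partial_\theta$ is spanned by $1,\cos2\theta,\sin2\theta$. Under the coordinate $x=\tan\theta$ these are exactly the infinitesimal generators of $PSL(2,\mathbb R)$, and since $L=2\pi/3<\pi$ any lift of four cyclically ordered points of $\mathbb T_L$ lies in an interval of length $<\pi$, where $\tan$ is a chart. Let $\theta_1<\theta_2<\theta_3<\theta_4<\theta_1+L$ be four material points $\theta_i(t)=\widehat\chi(t,\alpha_i)$, and set
\[
 R(t)=\log\frac{\sin(\theta_3-\theta_1)\,\sin(\theta_4-\theta_2)}{\sin(\theta_2-\theta_1)\,\sin(\theta_4-\theta_3)}>0 .
\]
The time derivative $\dot R$ is a linear functional of $u$, evaluated at the four points, that annihilates the projective fields. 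By Peano's theorem it can therefore be written as
\[
 \dot R=\int P(\theta)\,(u'''+4u')\dd\theta=2\int P\dd q(t),
\]
with a kernel $P$ supported in $[\theta_1,\theta_4]$. The \textbf{first step} is to compute $P$ and verify that it has a definite sign (expected: $P\le0$, with strict negativity on $(\theta_1,\theta_4)$ up to the middle interval's behaviour). This is the four-point cross-ratio monotonicity, and it replaces the kernel positivity of \cite{EMS}. The transport identity $q(t)=\chi(t)_\#Dg_0$ from \eqref{eq: 1.8}, together with order preservation of $\chi$, shows which component masses lie in each subinterval for all time.

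\textbf{Second step: the atom bounds \eqref{eq: 1.22}--\eqref{eq: 1.23}.} Fix a positive component $\mu_j^+$ on $I_j^+$. I will take four material points so that $[\theta_1,\theta_4]$ is the material image of a subarc of $I_j^+$. Then $q(t)$ restricted to that interval is $\nu_j^+(t)\ge0$, and since $\mu_j^+$ is non-atomic, the same subarc is charged by $\nu_j^+(t)$ for all $t$. With $P\le0$ this gives $\dot R\le0$, so $R$ is nonincreasing and bounded below. Hence
\[
 \int_0^\infty\!\!\int |P|\dd\nu_j^+(t)\,dt<\infty .
\]
Using the explicit lower bound on $|P|$ in terms of the gaps, I aim to conclude that for any two disjoint mass quantile-subarcs of $\nu_j^+$, the outer gaps cannot both stay of order one. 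Iterating over a countable dense family of quantile quadruples shows that every weak-* limit of $\nu_j^+(t_n)$ is a single atom; note that weak-* compactness is automatic on the compact circle. For a negative component the sign reverses. The same argument applied with the roles of the four points permuted (placing the mass in the middle interval $[\theta_2,\theta_3]$) should only prevent mass from being separated into three distinct clusters, giving $\mathfrak A_2$. Time reversal through $g(t)\mapsto-g(-t)$ exchanges the roles and yields \eqref{eq: 1.23}. \textbf{This step is the main obstacle:} extracting a quantitative, nondegenerate lower bound from the Peano kernel, and converting time-integrability of $\dot R$ into collapse rather than mere slow drift. If integrability alone is insufficient, I would use the monotone limit of $R$ and compactness of the configuration of material points instead.

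\textbf{Third step: the profile statements \eqref{eq: 1.24}--\eqref{eq: 1.25}.} Take $t_n\to+\infty$ and pass to a common subsequence along which every $\nu_j^\pm(t_n)$ converges as in the second step. The $K$ atoms are transported as atoms, and their positions converge on a further subsequence. Since $Dg(t_n)$ is a finite sum of these measures with conserved total variation, its limit has at most $K+M_++2M_-=N_+$ atoms. Then $g(t_n)$, which is determined by $Dg(t_n)$ and the conserved mean, converges in $L^1$ to some $h\in\mathcal P_{N_+}(\|g_0\|_\infty)$; the $L^\infty$ bound passes to the limit. To upgrade $L^1$ convergence to $W^{\alpha,r}$ convergence for $\alpha r<1$, I would interpolate: $\|f\|_{W^{\alpha,r}}\lesssim\|f\|_{L^1}^{\kappa}\bigl(\|f\|_\infty+\|Df\|_{\mathcal M}\bigr)^{1-\kappa}$ for some $\kappa>0$, applied to $f=g(t_n)-h$ with the conserved bounds on $\|f\|_\infty$ and $\|Df\|_{\mathcal M}$.

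\textbf{Final step: invariance, compactness, and full half-orbits.} Invariance of $\mathcal P_N(B)$ under the flow follows from \eqref{eq: 1.8}: a bi-Lipschitz homeomorphism maps step functions to step functions with the same number of jumps and the same values. For compactness, $\mathcal P_N(B)$ is the image of a compact parameter set (jump positions and values, allowing collisions) under a map continuous into $L^1$. The interpolation inequality above then makes it continuous into $W^{\alpha,r}$, so the image is compact. Finally, if $\dist_{W^{\alpha,r}}(g(t),\mathcal P_{N_+})\not\to0$, we pick $t_n$ with the distance bounded below. The subsequence argument of the third step contradicts this, which gives convergence along the entire half-orbit.
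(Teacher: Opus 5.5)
Your Step 1 is the paper's mechanism. Your $R$ is $-\log\rho$, where $\rho$ is the cross-ratio \eqref{eq: 3.25}, and your expected $P\le0$ corresponds to the paper's $P_x>0$. Steps 3--4 match the paper's counting and compactness arguments. The gap is in Step 2, where the content of the theorem lies.

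\textbf{Positive components.} The only usable lower bound on $|P|$ comes from the middle interval $[\theta_2,\theta_3]$, which carries the conserved mass $\eta_3-\eta_2$. There the concave kernel satisfies $|P|\geq\min\{\frac{A_1B_1}{2(A_1+B_1)},\frac{B_1C_1}{2(B_1+C_1)}\}\gtrsim\min(A_1,B_1,C_1)$. The outer masses may sit where the kernel vanishes. So time-integrability of $\dot R$, combined with $|\dot\theta_i|\le\Lambda$, only gives $\min(A_1,B_1,C_1)(t)\to0$ for each quantile quadruple. The collapsing gap may be the middle one, so this is not a statement about the outer gaps. Passing to quantile limits at continuity levels, it shows that every limit of $\nu_j^+(t_n)$ has at most three support points. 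That is \cref{lem: 3.3}, which the paper proves by passing to the limit in $\frac{d}{dt}\log\rho$ along time translates; your integrability route is a workable alternative for that step.

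It cannot give a single atom, however. For a limit with two or three atoms, any four levels send two consecutive labels to the same atom, so the condition holds automatically and iterating over quadruples adds nothing. The missing idea (\cref{lem: 3.4}) uses the pointwise one-sided bound rather than integrability:
\begin{itemize}
\item Once the limit is atomic, each atom $\xi$ owns the level interval $(F_{\widehat\nu}(\xi^-),F_{\widehat\nu}(\xi))$, of positive length, on which the quantile is identically $\xi$.
\item Take $\eta_1<\eta_2$ in the interval of one atom $a$, and $\eta_3<\eta_4$ in that of another atom $b$.
\item The material quadruple then converges to $(a,a,b,b)$, so $\rho(t_n)\to0$, i.e.\ $R(t_n)\to+\infty$.
\item This contradicts $R(t)\le R(0)$ for $t\ge0$.
\end{itemize}

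\textbf{Negative components.} Here the sign reversal breaks your argument. $R$ becomes nondecreasing and is unbounded above, so you get no integrability and not even finite support. The correct bounded monotone quantity is $\log(1-\rho)=\log(1-e^{-R})\le0$. It is nondecreasing because $\frac{d}{dt}\log(1-\rho)=-\frac{\rho}{1-\rho}\frac{d}{dt}\log\rho$. Its convergence, together with $\rho/(1-\rho)$ being bounded below while the four labels stay separated, gives the three-point bound. Then two labels placed in the middle atom drive the quadruple to $(a,b,b,d)$. This forces $\rho\to1$ (your $R\to0$), contradicting $1-\rho(t)\ge1-\rho(0)>0$, and yields $\mathfrak A_2$.

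Your remarks about ``placing the mass in the middle interval'' and ``the monotone limit of $R$'' point in this direction. But Step 2 needs a two-stage argument that you do not supply. First, finite support follows from convergence of a bounded monotone functional, with $\log\rho$ for positive components and $\log(1-\rho)$ for negative ones. Second, the one- and two-atom bounds follow from degenerate configurations built inside atoms.
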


Let $J_+$, $J_-$, and $J_0$ be at most countable index sets.  We assume that $g_0\in BV(\mathbb T_L)$ has the decomposition
\begin{equation}
 Dg_0=\sum_{j\in J_+}\mu_j^+
      -\sum_{j\in J_-}\mu_j^-
      +\sum_{k\in J_0}c_k\delta_{b_k}
 \quad\text{in }\mathcal M(\mathbb T_L),
 \label{eq: 1.26}
\end{equation}
where the measures, arcs, and atoms satisfy the corresponding conditions in \cref{def: 1.1}, with the finite families there replaced by the families indexed by $J_+$, $J_-$, and $J_0$.  For $j\in J_\pm$, set $m_j^\pm:=\mu_j^\pm(\mathbb T_L)$.  We additionally require
\begin{equation}
 \sum_{j\in J_+}m_j^+
 +\sum_{j\in J_-}m_j^-
 +\sum_{k\in J_0}|c_k|<\infty.
 \label{eq: 1.27}
\end{equation}
The series in \eqref{eq: 1.26} then converges absolutely in total variation.  We retain the notation $\nu_j^\pm(t):=\chi(t)_\#\mu_j^\pm$. We call $h\in BV(\mathbb T_L)$ a \emph{jump profile} if $Dh$ is purely atomic.  The support of $Dh$ need not be finite.

For the complete orbit through $g_0$, we define the one-sided limit sets
\begin{align}
 \Omega_+(g_0)
 &:=\bigcap_{T>0}\overline{\{g(t):t\geq T\}}^{\,L^1},
 \label{eq: 1.28}\\
 \Omega_-(g_0)
 &:=\bigcap_{T>0}\overline{\{g(t):t\leq-T\}}^{\,L^1}.
 \label{eq: 1.29}
\end{align}

\begin{thm}
\label{thm: 1.4}
Let $g_0\in BV(\mathbb T_L)$ satisfy \eqref{eq: 1.26}--\eqref{eq: 1.27}.  Then for every fixed component, the conclusions \eqref{eq: 1.22}--\eqref{eq: 1.23} hold with the same one- and two-atom bounds.  Moreover, the limit sets $\Omega_\pm(g_0)$ are nonempty, flow-invariant, and compact in $W^{\alpha,r}(\mathbb T_L)$ for every $1\leq r<\infty$ and $0\leq\alpha<1/r$.  They attract the corresponding half-orbits:
\begin{align}\label{eq: 1.30}
	\begin{cases}
		\dist_{W^{\alpha,r}}(g(t),\Omega_+(g_0))\longrightarrow0
		\ \ &\quad t\to+\infty,\\
		\dist_{W^{\alpha,r}}(g(t),\Omega_-(g_0))\longrightarrow0 \ \ & \quad t\to-\infty.
	\end{cases}
\end{align}
For every $h\in\Omega_+(g_0)$ there are points $x_j^+,y_k^+\in\mathbb T_L$, depending on $h$, and measures $\rho_j^-\in\mathfrak A_2(m_j^-)$ such that
\begin{equation}
 Dh=\sum_{j\in J_+}m_j^+\delta_{x_j^+}
    -\sum_{j\in J_-}\rho_j^-
    +\sum_{k\in J_0}c_k\delta_{y_k^+},
 \label{eq: 1.31}
\end{equation}
while for every $h\in\Omega_-(g_0)$ there are points $x_j^-,y_k^-\in\mathbb T_L$, depending on $h$, and measures $\rho_j^+\in\mathfrak A_2(m_j^+)$ such that
\begin{equation}
 Dh=\sum_{j\in J_+}\rho_j^+
    -\sum_{j\in J_-}m_j^-\delta_{x_j^-}
    +\sum_{k\in J_0}c_k\delta_{y_k^-}.
 \label{eq: 1.32}
\end{equation}
The series in \eqref{eq: 1.31}--\eqref{eq: 1.32} converge absolutely in total variation.  In particular, the limiting derivatives have neither an absolutely continuous nor a Cantor part.
\end{thm}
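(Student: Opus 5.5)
The plan is to reduce \cref{thm: 1.4} to the finite-decomposition result \cref{thm: 1.3} by truncating the series \eqref{eq: 1.26} and controlling the tail with \eqref{eq: 1.27}. The one- and two-atom bounds for a fixed component come from the cross-ratio argument, which only uses that the component is a non-atomic one-sign measure concentrated on an arc disjoint from the other arcs. I expect that argument to be local in the decomposition: it uses the positivity of the Peano kernel and the cyclic order of labels, which does not depend on how many other components exist. It should therefore carry over verbatim once the velocity $u=2A^{-1}g$ is generated by a BV profile with the infinite decomposition. If it does not, I will fall back on truncation. Write $Dg_0=Dg_0^{(n)}+R_n$, where $Dg_0^{(n)}$ keeps finitely many components and $\|R_n\|_{\mathcal M}\to0$. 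For the fixed component, I would then check that the Lagrangian estimates behind \eqref{eq: 1.22}--\eqref{eq: 1.23} involve the rest of $Dg(t)=\chi(t)_\#Dg_0$ only through quantities bounded by $\|Dg_0\|_{\mathcal M}$ and $\|g_0\|_\infty$.

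Next I turn to the limit sets. Since $\|Dg(t)\|_{\mathcal M}=\|Dg_0\|_{\mathcal M}$ and $\|g(t)\|_\infty=\|g_0\|_\infty$, the orbit is bounded in $BV\cap L^\infty$. By the compact embedding into $W^{\alpha,r}$ for $\alpha r<1$ (interpolating $L^1$-compactness with BV), the orbit is precompact in $W^{\alpha,r}$. Hence $\Omega_\pm$ is nonempty and compact, and the attraction \eqref{eq: 1.30} holds by the standard contradiction argument. For invariance I would use continuity of the solution map in $L^1$ on bounded subsets of $BV\cap L^\infty$. The velocity $u=2A^{-1}g$ is Lipschitz with constant controlled by $\|g\|_\infty$, and by Gronwall $L^1$ differences of profiles propagate via stability of the flow maps. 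This gives $S(s)\Omega_\pm\subset\Omega_\pm$ for all $s\in\mathbb R$.

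For the structure \eqref{eq: 1.31}, take $h\in\Omega_+$ with $g(t_n)\to h$ in $L^1$. Pass to a diagonal subsequence along which each $\nu_j^\pm(t_n)$ and each atom location $\chi(t_n,b_k)$ converges, which is possible since the index sets are countable. By \eqref{eq: 1.22}, $\nu_j^+(t_n)\rightharpoonup m_j^+\delta_{x_j^+}$ and $\nu_j^-(t_n)\rightharpoonup\rho_j^-\in\mathfrak A_2(m_j^-)$; the set $\mathfrak A_\ell(m)$ is weak-* closed. The atoms converge to $c_k\delta_{y_k^+}$. Because of \eqref{eq: 1.27}, the sum of component masses outside a finite set is uniformly small in total variation. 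Hence $Dg(t_n)$ converges weak-* to the absolutely convergent series on the right of \eqref{eq: 1.31}, and since $Dg(t_n)\rightharpoonup Dh$, we get \eqref{eq: 1.31}. The case of $\Omega_-$ is symmetric, using \eqref{eq: 1.23}. The claim that $Dh$ is purely atomic is then immediate.

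The main obstacle is the first step: making sure the single- and two-atom concentration for a fixed component survives infinitely many other components. In particular, the finite count $N_\pm$ from \cref{thm: 1.3} is no longer available, so any argument using finiteness of jump sets must be replaced by a uniform-in-$n$ tail estimate. I would attack this by making the cross-ratio monotonicity estimate for the fixed component depend only on the positivity structure of $q$, namely on the sign of $q$ near that component's labels.
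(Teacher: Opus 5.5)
Your proposal is correct and follows essentially the paper's route. The paper also applies the cross-ratio contradiction arguments of \cref{lem: 3.3,lem: 3.4} componentwise, since they only use $Dg(t)\llcorner\chi(t,I_j^\pm)=\pm\nu_j^\pm(t)$ together with \cref{lem: 3.1} for a finite family. Likewise it gets nonemptiness, compactness and attraction of $\Omega_\pm(g_0)$ from $BV\cap L^\infty$ precompactness, and derives \eqref{eq: 1.31}--\eqref{eq: 1.32} from a diagonal subsequence and the uniform tail bound coming from \eqref{eq: 1.27}, so your truncation fallback is never needed. The one minor difference is invariance: you use a direct Gronwall $L^1$-stability estimate, while the paper uses the compactness-plus-uniqueness continuity result of \cref{lem: 2.4}. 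Both work.
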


Every profile in the attracting omega-limit sets of \cref{thm: 1.4} has purely atomic derivative.

\begin{cor}
\label{cor: 1.5}
Let $g_0\in C^1(\mathbb T_L)$ be nonconstant.  Then, for every $p\in(1,\infty]$, we have $\|g_\theta(t)\|_{L^p}\to\infty$ as $t\to\pm\infty$, and each half-orbit is attracted to the corresponding compact limit set of jump profiles in \cref{thm: 1.4}.  If $|t_n|\to\infty$ and $g(t_n)\rightharpoonup h$ weakly in $L^2(\mathbb T_L)$, then $g(t_n)\to h$ strongly in $W^{\alpha,r}(\mathbb T_L)$ for every $1\leq r<\infty$, $0\leq\alpha<1/r$.  Moreover, $Dh$ is purely atomic and the complete orbit through $h$ is precompact in all these spaces.
\end{cor}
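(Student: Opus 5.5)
We combine \cref{thm: 1.1} with \cref{thm: 1.4}. The work is to check that $C^1$ data satisfy \eqref{eq: 1.26}--\eqref{eq: 1.27}, and then to upgrade weak $L^2$ limits to strong ones and obtain precompactness of the orbit through $h$.

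\textbf{Step 1 (decomposition of $C^1$ data).} Since $g_0\in C^1$, we have $g_0\in W^{1,\infty}$, so \cref{thm: 1.1} gives the growth $\|g_\theta(t)\|_{L^p}\to\infty$ for every $p\in(1,\infty]$. For the decomposition, $q_0=g_0'$ is continuous. The open set $\{q_0>0\}$ is an at most countable disjoint union of open arcs $I_j^+$, and similarly $\{q_0<0\}$ splits into arcs $I_j^-$. If $q_0$ never vanishes, a single arc with one cut point removed is used, which is the endpoint case $b=a+L$. Set $\mu_j^\pm:=|q_0|\,\dd\theta\llcorner I_j^\pm$ and take $J_0=\emptyset$. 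On $\{q_0=0\}$ the measure $Dg_0$ vanishes, so \eqref{eq: 1.26} holds. Each $\mu_j^\pm$ is nonzero, non-atomic, positive and concentrated on its arc, and the arcs are pairwise disjoint. The sum of the masses equals $\|q_0\|_{L^1}<\infty$, which is \eqref{eq: 1.27}. Hence \cref{thm: 1.4} applies: $\Omega_\pm(g_0)$ are compact in each $W^{\alpha,r}$, they attract the corresponding half-orbits, and by \eqref{eq: 1.31}--\eqref{eq: 1.32} their elements have purely atomic derivative.

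\textbf{Step 2 (weak to strong convergence).} Suppose $t_n\to+\infty$; the case $t_n\to-\infty$ is symmetric. By \eqref{eq: 1.30} we can pick $h_n\in\Omega_+(g_0)$ with $\|g(t_n)-h_n\|_{W^{\alpha,r}}\to0$. Compactness of $\Omega_+(g_0)$ in $L^2$ (take $\alpha=0$, $r=2$) gives a subsequence with $h_n\to h'\in\Omega_+$ strongly. Strong convergence implies weak convergence, so $h'=h$. Since the limit is independent of the subsequence, $g(t_n)\to h$ in $W^{\alpha,r}$ along the whole sequence, for every admissible $(\alpha,r)$; a diagonal argument is not even needed. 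In particular $h\in\Omega_+(g_0)$, so $Dh$ is purely atomic. If $|t_n|\to\infty$ with mixed signs, we split the sequence into its two signed subsequences. Each has weak limit $h$, and each converges strongly to $h$ by the argument above.

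\textbf{Step 3 (precompact orbit through $h$).} By \cref{thm: 1.4}, $\Omega_+(g_0)$ is flow-invariant. Hence the complete solution through $h$ stays in $\Omega_+(g_0)$ for all $t\in\mathbb R$. A subset of a compact set of $W^{\alpha,r}$ is precompact, so the orbit through $h$ is precompact in all these spaces.

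I expect the main obstacle to be Step 1, and specifically the bookkeeping in \cref{def: 1.1}. One must check that the countable family of arcs of $\{q_0\neq0\}$, together with the endpoint convention, meets conditions (a)--(d). Condition (d) is vacuous here because there are no atoms, and the measure identity needs $Dg_0\llcorner\{q_0=0\}=0$. Once the decomposition is in place, everything else follows directly from \cref{thm: 1.4}, including the flow invariance of $\Omega_\pm$ through the two-sided solution.
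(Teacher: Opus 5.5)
Your proposal is correct and follows essentially the paper's argument: the same component-arc decomposition of $\{g_{0,\theta}>0\}$ and $\{g_{0,\theta}<0\}$ verifies \eqref{eq: 1.26}--\eqref{eq: 1.27}, then \cref{thm: 1.1} and \cref{thm: 1.4} apply, and flow invariance plus compactness of $\Omega_\pm(g_0)$ gives precompactness of the orbit through $h$. The only difference is the weak-to-strong step. The paper applies uniform $BV$ compactness to $g(t_n)$, the $L^\infty$ interpolation bound and \cref{lem: 2.1} directly, whereas you route it through the attraction \eqref{eq: 1.30} and compactness of $\Omega_+(g_0)$; both work, and your case ``$q_0$ never vanishes'' is vacuous because $q_0$ has zero mean.
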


\begin{cor}
\label{cor: 1.6}
Let $\omega_0\in L^1(\mathbb R^2)\cap L^\infty(\mathbb R^2)$ be compactly supported, 3-fold symmetric, smooth on $\mathbb R^2\setminus\{0\}$, and satisfy
\begin{equation}
 \int_{\mathbb R^2}\omega_0(x)\dd x=0.
 \label{eq: 1.33}
\end{equation}
Suppose that, for a nonconstant $g_0\in C^1(\mathbb T_L)$,
\begin{equation}
 \omega_0(r,\theta)\longrightarrow g_0(\theta)
 \quad\text{for almost every $\theta$ as $r\to0$}.
 \label{eq: 1.34}
\end{equation}
Let $\omega$ be the corresponding complete Euler solution.  For every finite time, $\omega(t)$ is smooth on $\mathbb R^2\setminus\{0\}$.  Its velocity has finite kinetic energy, and its radial trace $g(t)$ solves \eqref{eq: 1.4}.  Then for every $1<p\leq\infty$,
\begin{equation}
 \lim_{t\to\pm\infty}\ \liminf_{r\to0}
 \|r\nabla\omega(t,r,\cdot)\|_{L^p(\mathbb T_L)}=+\infty.
 \label{eq: 1.35}
\end{equation}
The two half-orbits of radial traces are attracted to the compact sets of jump profiles in \cref{cor: 1.5}.  In particular, every infinite-time $L^1$ limit of the traces has purely atomic derivative.
\end{cor}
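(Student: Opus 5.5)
We prove Corollary 1.6 by reducing it to Corollary 1.5 applied to the radial trace $g_0$.

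\textbf{Regularity and the trace equation.} Since $\omega_0$ is bounded, 3-fold symmetric and of zero mean, the Elgindi--Jeong theory \cite{EJ} gives a unique global Yudovich solution, forward and (by the involution $\omega(t,x)\mapsto-\omega(-t,x)$ used in Section 1.1) backward. Compact support and \eqref{eq: 1.33} give $\hat u=O(|x|^{-2})$ at infinity, so the kinetic energy is finite and stays finite by energy conservation. Away from the origin the velocity is log-Lipschitz near $0$ and smooth elsewhere. Because the support of $\omega_0$ is compact, the flow map fixes $0$ and moves only finitely far in finite time. Standard propagation of regularity on $\mathbb R^2\setminus\{0\}$ then gives smoothness of $\omega(t)$ there for each finite $t$.

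For the trace, we use the decomposition of \cite{EJ}: near $0$ the velocity equals the zero-homogeneous part $\nabla^\perp(r^2G(t,\theta))$ plus a remainder that is $o(r)$ in a suitable averaged sense. Hence the Lagrangian flow near the origin is asymptotically the dilation-invariant flow generated by $u=2G$. Blowing up with $\omega(t,\lambda x)$ and $\lambda\to0$, we obtain pointwise a.e. convergence to $g(t,\theta)$, where $g$ solves \eqref{eq: 1.4} with data $g_0$. This uses the uniqueness for \eqref{eq: 1.4} recalled in Section 1.1, together with the stability of the reduced transport under $L^1$ convergence of the data.

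\textbf{Gradient growth.} The key point is lower semicontinuity. Write $r\nabla\omega(t,r,\theta)=\partial_\theta\omega(t,r,\theta)\,e_\theta+r\partial_r\omega\,e_r$, so that $\|r\nabla\omega\|_{L^p}\ge\|\partial_\theta\omega(t,r,\cdot)\|_{L^p}$. As $r\to0$ we have $\omega(t,r,\cdot)\to g(t)$ in $L^1(\mathbb T_L)$, hence $\partial_\theta\omega(t,r,\cdot)\to g_\theta(t)$ in distributions. Weak lower semicontinuity of the $L^p$ norm (for $1<p\le\infty$, via duality with $C^1$ test functions) gives
$$\liminf_{r\to0}\|r\nabla\omega(t,r,\cdot)\|_{L^p}\ge\|g_\theta(t)\|_{L^p}.$$
If the left side is finite along a subsequence, then $g(t)\in W^{1,p}$, and the inequality still holds. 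Since $g_0\in C^1$ is nonconstant, Corollary 1.5 gives $\|g_\theta(t)\|_{L^p}\to\infty$ as $t\to\pm\infty$. This proves \eqref{eq: 1.35}.

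\textbf{Attraction of the traces.} This follows directly from Corollary 1.5, since the traces coincide with $g(t)$. The same corollary shows that every $L^1$ limit point lies in $\Omega_\pm(g_0)$, so by Theorem 1.4 its derivative is purely atomic.

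\textbf{Main obstacle.} The hardest step is justifying that the trace satisfies \eqref{eq: 1.4}. Concretely, one must show that the non-homogeneous part of the Biot--Savart velocity is $o(r)$ near $0$ uniformly on compact time intervals, and that the resulting trajectories near the origin asymptotically follow $\widehat\chi$. I would first try to quote \cite[Proposition~3.5]{EJ} and its proof, which decompose $\omega$ into its radial limit plus a part that vanishes at $0$. The vanishing part generates a velocity $|x|\,\epsilon(|x|)$ by a Dini-type estimate, and Gronwall applied to angular trajectories then closes the argument.
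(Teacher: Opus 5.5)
Your outline matches the paper's proof in structure: solvability and smoothness off the origin from \cite{EJ}, the trace equation, the pointwise bound $|r\nabla\omega|\geq|\partial_\theta\omega|$ with lower semicontinuity, and then \cref{thm: 1.1} and \cref{cor: 1.5}. Your duality form of lower semicontinuity is correct. Testing against $C^1$ functions, which are dense in $L^{p'}$ for $1\leq p'<\infty$, handles $p=\infty$ without the separate weak-$*$ extraction the paper uses. One small point on regularity: what you need is that particles starting away from $0$ stay away from $0$. The paper gets this from $|u(x)|\leq C\|\omega\|_\infty|x|$, which gives $|X(t)|\geq e^{-C\|\omega_0\|_\infty|t|}|X(0)|$. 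Your phrase ``moves only finitely far'' addresses the opposite direction, although the flow being a homeomorphism fixing $0$ would also suffice.

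The gap is the step you flag yourself: that $\omega(t)$ has a radial trace for every $t$, and that this trace solves \eqref{eq: 1.4} with data $g_0$. The paper does not prove this. It cites \cite[Proposition~1.8]{EMS}, then obtains $\omega(t,r,\cdot)\to g(t)$ in $L^p(\mathbb T_L)$, $p<\infty$, from $\|\omega(t)\|_\infty=\|\omega_0\|_\infty$ and dominated convergence. Your substitute does not establish this, for three reasons.
\begin{enumerate}
\item As used here, \cite[Proposition~3.5]{EJ} is the well-posedness statement for the reduced equation, i.e.\ for exactly zero-homogeneous vorticity. Your $\omega_0$ is only asymptotically homogeneous at $0$.
\item Splitting off a remainder that vanishes at the origin at $t=0$ does not close by itself. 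You must show the remainder keeps vanishing for $t\neq0$, while the flow that transports it is driven partly by its own velocity. That is a nonlinear bootstrap your sketch does not carry out. Moreover, \eqref{eq: 1.34} gives no modulus of vanishing, so a Dini-type bound is not available as stated.
\item A blow-up of $\omega(t,\lambda\,\cdot)$ via stability of solutions naturally gives $L^1_{\mathrm{loc}}$ convergence, i.e.\ convergence averaged over radii in annuli. Your lower-semicontinuity step instead needs $\omega(t,r,\cdot)\to g(t)$ in $L^1(\mathbb T_L)$, or at least in distributions, along every sequence $r\to0$, because \eqref{eq: 1.35} is a $\liminf$ of norms on individual circles.
\end{enumerate}
Replacing the sketch by \cite[Proposition~1.8]{EMS} plus dominated convergence closes the argument. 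The gradient-growth, attraction, and atomic-derivative conclusions then follow exactly as you wrote.
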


Before giving its proof, we point out that the nonconstant trace in \eqref{eq: 1.34} makes $\omega_0$ necessarily discontinuous at the origin.  Accordingly, the smoothness hypothesis is imposed on $\mathbb R^2\setminus\{0\}$.

\begin{proof}
Transport preserves the circulation and leaves the support compact at each finite time.  Denote the planar velocity by $U$.  For large $|x|$,
$$
 U(t,x)=\frac1{2\pi}\int_{\mathbb R^2}
 \left(\frac{(x-y)^\perp}{|x-y|^2}-\frac{x^\perp}{|x|^2}\right)
 \omega(t,y)\dd y=O(|x|^{-2}).
$$
Together with $\omega(t)\in L^1\cap L^\infty$, this implies $U(t,\cdot)\in L^2(\mathbb R^2)$ for every $t$.

3-fold symmetry fixes the origin.  The velocity estimate \cite[(1.8)]{EJ} and $\|\omega(t)\|_{L^\infty}=\|\omega_0\|_{L^\infty}$ imply that every particle path $X(t)$ satisfies
$$
 e^{-C\|\omega_0\|_\infty|t|}|X(0)|
 \leq |X(t)|
 \leq e^{C\|\omega_0\|_\infty|t|}|X(0)|.
$$
Thus a particle starting away from the origin stays away from the origin on every finite time interval.  Since $\omega_0$ is smooth on $\mathbb R^2\setminus\{0\}$, local propagation of regularity as in \cite[Section~2.3]{EJ} gives
$$
 \omega(t)\in C^\infty_{\mathrm{loc}}(\mathbb R^2\setminus\{0\})
 \qquad(t\in\mathbb R).
$$

By \cite[Proposition~1.8]{EMS}, the radial trace exists for every $t$ and is the solution of \eqref{eq: 1.4} with initial value $g_0$.  The uniform vorticity bound and dominated convergence imply that as $r\to 0$,
$$
 \omega(t,r,\cdot)\longrightarrow g(t)
 \quad\text{in }L^p(\mathbb T_L),\qquad 1\leq p<\infty.
$$
Weak lower semicontinuity in $W^{1,p}(\mathbb {T}_L)$, and weak-* lower semicontinuity in $W^{1,\infty}(\mathbb {T}_L)$, therefore yield the following estimate.  For $p=\infty$, one first takes a sequence realizing the lower limit.  If that lower limit is finite, weak-* compactness and the finite-$p$ convergence identify the limit as $g(t)$.  Thus
$$
 \|g_\theta(t)\|_{L^p}
 \leq\liminf_{r\to0}
 \|\partial_\theta\omega(t,r,\cdot)\|_{L^p}
 \leq\liminf_{r\to0}
 \|r\nabla\omega(t,r,\cdot)\|_{L^p}.
$$
Now \cref{thm: 1.1} proves \eqref{eq: 1.35}.  By \cref{cor: 1.5}, we obtain the assertions about the trace limit sets.
\end{proof}

\begin{cor}\label{cor: 1.7}
Let $n\geq1$ be an integer, and let $g_0\in C^\infty(\mathbb T_L)$ be a nonconstant Morse function with exactly $n$ local maxima and $n$ local minima, so $g_{0,\theta}(\theta)=0$ implies $g_{0,\theta\theta}(\theta)\neq0$.  Then, for every $p\in(1,\infty]$,
$$
 \|g_\theta(t)\|_{L^p(\mathbb T_L)}\longrightarrow+\infty
 \qquad t\to\pm\infty.
$$
Moreover, for every $1\leq r<\infty$ and $0\leq\alpha<1/r$,
\begin{equation}
 \dist_{W^{\alpha,r}}\bigl(g(t),\mathcal P_{3n}(\|g_0\|_\infty)\bigr)
 \longrightarrow0
 \qquad t\to\pm\infty.
 \label{eq: 1.36}
\end{equation}
Moreover, if $t_k\to+\infty$ or $t_k\to-\infty$ and $g(t_k)\rightharpoonup h$ weakly in $L^2$, then $h\in\mathcal P_{3n}(\|g_0\|_\infty)$, and the complete orbit through $h$ is precompact in $W^{\alpha,r}(\mathbb T_L)$ throughout the stated range.
\end{cor}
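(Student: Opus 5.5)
The plan is to exhibit an explicit finite decomposition of $Dg_0$ in the sense of \cref{def: 1.1} with $M_+=M_-=n$ and $K=0$. Then \eqref{eq: 1.17} gives $N_+=K+M_++2M_-=3n$ and $N_-=K+2M_++M_-=3n$, and the corollary follows from \cref{thm: 1.3}, \cref{thm: 1.1} and \cref{cor: 1.5}.

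\emph{Step 1: the decomposition.} Since $g_0$ is Morse, its critical points are nondegenerate, hence isolated. By compactness of $\mathbb T_L$ they are finitely many. A nondegenerate critical point is a strict local maximum or minimum, so by hypothesis there are exactly $2n$ critical points. I will argue that they alternate around the circle. Between two consecutive critical points $g_{0,\theta}$ has constant sign. At a nondegenerate maximum $g_{0,\theta}$ changes from positive to negative, and at a minimum from negative to positive. Hence a maximum is always followed by a minimum and vice versa. Listing them cyclically as $a_1<b_1<a_2<\dots<b_n<a_1+L$, with the $a_i$ minima and the $b_i$ maxima, set
\[
I_i^+:=\quot((a_i,b_i)),\qquad I_i^-:=\quot((b_i,a_{i+1})),\qquad \mu_i^\pm:=|g_{0,\theta}|\,\dd\theta\llcorner I_i^\pm .
\]
I then check the conditions of \cref{def: 1.1}:
\begin{enumerate}[label=(\roman*)]
\item The arcs are nondegenerate, open, pairwise disjoint and of length less than $L$; this uses $n\geq1$.
\item Each $\mu_i^\pm$ is absolutely continuous, hence non-atomic, and nonzero because $g_{0,\theta}\neq0$ on the arc.
\item Each $\mu_i^\pm$ is concentrated on its arc, and $Dg_0\llcorner I_i^\pm=\pm\mu_i^\pm$.
\item The complement of the union of the arcs is the finite, hence Lebesgue-null, set of critical points. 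Since $Dg_0=g_{0,\theta}\dd\theta$ is absolutely continuous, \eqref{eq: 1.16} holds with $K=0$.
\end{enumerate}

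\emph{Step 2: growth and relaxation.} Since $g_0\in C^\infty\subset W^{1,\infty}$ is nonconstant, \cref{thm: 1.1} gives $\|g_\theta(t)\|_{L^p}\to\infty$ as $t\to\pm\infty$ for every $p\in(1,\infty]$. Since $N_\pm=3n$, \eqref{eq: 1.24}--\eqref{eq: 1.25} from \cref{thm: 1.3} are exactly \eqref{eq: 1.36}.

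\emph{Step 3: weak limits.} Let $t_k\to\pm\infty$ with $g(t_k)\rightharpoonup h$ weakly in $L^2$. Since $g_0\in C^1$, \cref{cor: 1.5} upgrades this to $g(t_k)\to h$ strongly in $W^{\alpha,r}$. By \eqref{eq: 1.36} there are $h_k\in\mathcal P_{3n}(\|g_0\|_\infty)$ with $\|g(t_k)-h_k\|_{W^{\alpha,r}}\to0$, so $h_k\to h$. The set $\mathcal P_{3n}(\|g_0\|_\infty)$ is compact in $W^{\alpha,r}$ by \cref{thm: 1.3}, hence closed, so $h\in\mathcal P_{3n}(\|g_0\|_\infty)$. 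By the flow invariance of $\mathcal P_{3n}(\|g_0\|_\infty)$ stated in \cref{thm: 1.3}, the complete orbit through $h$ stays in this compact set. It is therefore precompact in every $W^{\alpha,r}$ with $\alpha r<1$.

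The only point requiring real care is the alternation and counting argument in Step 1, which produces exactly $n$ positive and $n$ negative arcs and no atoms. Everything else is a direct application of the earlier results.
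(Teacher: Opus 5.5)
Your proposal is correct and follows essentially the same route as the paper: the alternation of nondegenerate critical points gives a finite decomposition with $K=0$ and $M_+=M_-=n$, hence $N_\pm=3n$, and the conclusions then follow from \cref{thm: 1.1}, \cref{thm: 1.3}, and the compactness and flow invariance of $\mathcal P_{3n}(\|g_0\|_\infty)$ from \cref{lem: 2.2}. The only difference is bookkeeping: you cite \cref{cor: 1.5} for the weak-to-strong upgrade and use closedness of $\mathcal P_{3n}(\|g_0\|_\infty)$ directly, whereas the paper repeats the $BV$-compactness upgrade and passes through the inclusion $\Omega_\pm(g_0)\subset\mathcal P_{3n}(\|g_0\|_\infty)$.
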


By the standard genericity theorem for Morse functions \cite[Chapter~6, \S1]{Hirsch}, the set of nonconstant Morse functions is open and dense in $C^\infty(\mathbb T_L)$ with its usual Fr\'{e}chet topology. For each fixed $n$, the subclass with exactly $n$ local maxima and $n$ local minima is open; the density assertion concerns the union of these subclasses. Hence \cref{cor: 1.7} applies to an open dense class of nonconstant smooth profiles.  The count $3n$ comes from the alternating increasing and decreasing arcs, see \cref{subsec: 3.4}.

We quotient by rigid rotations, identifying $f$ with $f(\cdot+\beta)$. When a topology on this quotient is needed, we use the quotient metric induced by $L^s(\mathbb T_L)$.

An orbit is a travelling wave if $g(t,\theta)=h(\theta-ct)$ for some profile $h$ and constant $c$.  We say that it is heteroclinic modulo rotations if its quotient orbit converges, as $t\to-\infty$ and $t\to+\infty$, to the classes of travelling waves with fewer jumps. A limit of time translates is a complete solution $\bar g$ obtained as $g(t_n+\cdot)\to\bar g$ in $C_{\mathrm{loc}}(\mathbb R;L^s)$, for some (equivalently, every) $1\leq s<\infty$, along a sequence $t_n\to+\infty$ or $t_n\to-\infty$.

\begin{thm}
\label{thm: 1.8}
Fix $B\geq0$, let $g_*\in\mathcal P_3(B)$, and let $\bar g$ be the complete $BV$ solution of \eqref{eq: 1.4} with $\bar g(0)=g_*$.  Write $g_*$ in its minimal jump representation, with zero jumps removed and adjacent arcs carrying the same value merged. Then exactly one of the following occurs:
\begin{enumerate}[label=(\roman*)]
\item $g_*$ is constant, and the orbit is fixed.
\item $g_*$ has two nonzero jumps, and its orbit is a travelling wave.
\item $g_*$ has three nonzero jumps.  Label the jump points cyclically by $a_0<a_1<a_2<a_0+L$, set $\Delta_i:=g_*(a_i+)-g_*(a_i-)$, and define
$$
 x_0=a_1-a_0,\qquad y_0=a_2-a_1,\qquad z_0=L-x_0-y_0.
$$
There exist a bounded open interval $(\sigma_-,\sigma_+)$ containing $0$ and a strictly increasing function $\sigma:\mathbb R\to(\sigma_-,\sigma_+)$ such that
$$
 (x(t),y(t),z(t))
 =(x_0,y_0,z_0)+\sigma(t)(\Delta_2,\Delta_0,\Delta_1),
$$
where $x(t),y(t),z(t)$ are the three consecutive interval lengths.  Moreover,
$$
 \sigma(t)\to\sigma_-\quad(t\to-\infty),
 \qquad
 \sigma(t)\to\sigma_+\quad(t\to+\infty).
$$
The endpoint profiles have fewer jumps: an endpoint in the interior of an edge gives a two-jump travelling wave, while a vertex gives a constant profile.  Thus the orbit is heteroclinic modulo rotations, and no genuine three-jump orbit is periodic.
\end{enumerate}
If one interval length vanishes at an endpoint, then the limiting profile has two jumps.  If two interval lengths vanish, then it is constant.

Finally, let $g_0\in BV(\mathbb T_L)$ admit \eqref{eq: 1.16} with $K=0$ and $M_+=M_-=1$.  The orbit of every complete solution obtained as a limit of time translates of the solution through $g_0$, along a sequence tending to either $+\infty$ or $-\infty$, is one of the three classes listed in (i)--(iii).
\end{thm}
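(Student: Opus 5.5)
The plan is to reduce the dynamics on $\mathcal P_3(B)$ to a finite-dimensional ODE for the jump points. I would then locate a conserved linear quantity that confines the interval lengths to a line segment, and show that the scalar ODE on that segment has no interior zeros.

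\emph{Reduction.} For $g_*\in\mathcal P_3(B)$ with jumps $a_i$ and heights $\Delta_i$, we have $Dg(t)=\chi(t)_\#Dg_*=\sum_i\Delta_i\delta_{a_i(t)}$ by \eqref{eq: 1.8}. Since $\chi(t)$ is an orientation-preserving homeomorphism, the jump heights and the cyclic order are preserved, and $\dot a_i=u(t,a_i(t))$. Here $u=2A^{-1}g$ is an explicit Lipschitz function of $(a_0,a_1,a_2)$, obtained by solving $G''+4G=g$ piecewise. Thus $g(t)$ remains a step function with the same values $v_1,v_2,v_0$ on the three consecutive arcs, whose lengths are $x,y,z$. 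Cases (i) and (ii) come first.

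In case (i), a constant profile gives $u$ constant; the profile is a rigid rotation, hence fixed modulo rotations, which is what (i) asserts.

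In case (ii), I would use conservation of the mean. Indeed,
$$\frac{d}{dt}\int g=-\int 2Gg_\theta=2\int G_\theta(G''+4G)=0.$$
With two values $v_1\neq v_0$ we have $v_1x+v_0(L-x)$ constant, so $x$ is constant. The configuration is then rigid, and by rotation invariance of the equation its common speed is constant, which gives a travelling wave.

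\emph{Case (iii): the line.} The same conservation law gives $v_1x+v_2y+v_0z=\text{const}$, together with $x+y+z=L$. The direction $(\Delta_2,\Delta_0,\Delta_1)=(v_0-v_2,\,v_1-v_0,\,v_2-v_1)$ is orthogonal to $(v_1,v_2,v_0)$ (a direct check) and to $(1,1,1)$ (since $\sum\Delta_i=0$). Therefore $(x,y,z)$ lies on the line $(x_0,y_0,z_0)+\sigma(\Delta_2,\Delta_0,\Delta_1)$, and the constraint $x,y,z>0$ cuts out a bounded open interval $(\sigma_-,\sigma_+)\ni0$. Along it, $\sigma$ satisfies an autonomous scalar ODE
$$\dot\sigma=F(\sigma),\qquad \text{e.g. } \Delta_2\,\dot\sigma=u(a_1)-u(a_0).$$
The function $F$ extends continuously to $[\sigma_-,\sigma_+]$. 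At an endpoint one length vanishes and the profile has at most two jumps. By case (ii), that endpoint profile is a travelling wave, so all interval lengths are stationary and $F(\sigma_\pm)=0$.

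\emph{Main obstacle: sign of $F$.} The key step is to show that $F>0$ on $(\sigma_-,\sigma_+)$. I would compute $G$ explicitly for a three-step profile: on each arc, $G=v/4+$ a combination of $\cos2\theta$ and $\sin2\theta$, matched in $C^1$ across the jumps. From this I would express $u(a_1)-u(a_0)$ in closed form. I expect it to factor as
$$c\,\Delta_0\Delta_1\Delta_2\sin x\,\sin y\,\sin z\,/\,\Delta_2,$$
with $c$ a universal constant coming from the nonresonance $\sin(3L/2)\neq0$, up to the normalization of $\sigma$. The sines are positive because $x,y,z\in(0,2\pi/3)$, and the product $\Delta_0\Delta_1\Delta_2$ has a fixed sign for a genuine three-jump profile. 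If the closed form is messy, the fallback is the four-point cross-ratio monotonicity mentioned in the introduction, which should yield strict monotonicity of an interval ratio along the flow. Granting $F>0$ on the open interval with zeros at the endpoints, the solution of the ODE exists for all time, stays inside the interval, and is strictly increasing. It converges to zeros of $F$, which can only be $\sigma_\pm$. A vertex of the segment, where two lengths vanish, gives a constant profile, and the interior of an edge gives a two-jump wave. In particular, no genuine three-jump orbit is periodic.

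\emph{Final assertion.} With $K=0$ and $M_+=M_-=1$ we have $N_\pm=3$, so by \cref{thm: 1.3} the distance from $g(t)$ to $\mathcal P_3(\|g_0\|_\infty)$ tends to zero as $t\to\pm\infty$. Take a limit of time translates $g(t_n+\cdot)\to\bar g$ in $C_{\mathrm{loc}}(\mathbb R;L^s)$. For each fixed $s$, the time $t_n+s$ tends to the same infinity as $t_n$, so $\bar g(s)$ lies in the closed set $\mathcal P_3(B)$. Thus $\bar g$ is the complete $BV$ solution through $\bar g(0)\in\mathcal P_3(B)$, by uniqueness, and the classification (i)–(iii) applies.
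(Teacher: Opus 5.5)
\paragraph{Where the proposal stands.} Most of your outline is sound:
\begin{itemize}
\item the reduction to the jump ODE and cases (i) and (ii);
\item the observation that conservation of $\int g$, together with $x+y+z=L$, confines $(x,y,z)$ to the line with direction $(\Delta_2,\Delta_0,\Delta_1)$;
\item the argument that $F(\sigma_\pm)=0$, because the endpoint configurations are two-jump travelling waves or constants;
\item the final assertion, which should use $\mathcal P_3(\|g_0\|_\infty)$ rather than the fixed $B$.
\end{itemize}

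\paragraph{The gap.} It is exactly the step you flag as the main obstacle, and it is the whole content of case (iii). Nothing in the proposal shows that $F$ has no zero in $(\sigma_-,\sigma_+)$, let alone that $F>0$ there. An interior zero would be a genuine three-jump travelling wave, which would destroy both the heteroclinic conclusion and the non-periodicity claim. A negative $F$ would contradict the stated orientation of $\sigma$.

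Moreover, the closed form you anticipate cannot be correct, for three reasons.
\begin{itemize}
\item The map $g\mapsto u=2A^{-1}g$ is linear, and the mean contribution $\langle g\rangle/2$ cancels in $u(a_1)-u(a_0)$. So this difference is linear in $(\Delta_0,\Delta_1,\Delta_2)$ and cannot contain a factor $\Delta_0\Delta_1$.
\item For $L=2\pi/3$ you have $\sin(3L/2)=\sin\pi=0$, so it cannot be the nonresonance constant. The relevant constant is $\sin L\neq0$.
\item $\Delta_0\Delta_1\Delta_2$ is negative when two jumps are positive and positive when two are negative. A formula of your type would therefore make $\sigma$ decrease for one of the two sign patterns.
\end{itemize}
The cross-ratio fallback is only gestured at. The monotonicity of Section~3.1 concerns four material labels inside an arc on which $Dg$ has one sign. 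Converting it into a statement about $\sigma$, namely excluding interior equilibria and fixing the direction of motion, needs an argument that is not supplied.

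\paragraph{What is missing: the explicit jump law.} Let $\mathcal R(r)=\frac{\sin(2r-L)+\sin L}{4\sin L}-\frac{r}{2L}$ on $[0,L]$, extended periodically; this is the mean-zero primitive of $2K_L-1/(2L)$. The velocity is then $u(\theta)=\langle g\rangle/2+\sum_j\Delta_j\mathcal R(\theta-a_j)$. Oddness of $\mathcal R$ and $\sum_i\Delta_i=0$ give $\dot x=\Delta_2[\mathcal R(x+y)-\mathcal R(x)-\mathcal R(y)]$.

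A trigonometric identity then yields $\mathcal R(x+y)-\mathcal R(x)-\mathcal R(y)=\sin x\sin y\sin(L-x-y)/\sin L$. Hence $\dot x=\Delta_2F$, $\dot y=\Delta_0F$, $\dot z=\Delta_1F$ with $F=\sin x\sin y\sin z/\sin L$. This $F$ is independent of the jump sizes and strictly positive on the open simplex, because $L<\pi$.

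This one computation gives both the line (so your conservation-law derivation becomes an independent check) and the sign. Once you have it, your Lipschitz-uniqueness argument at the endpoints is an acceptable substitute for the paper's proof that $\int\mathrm d\sigma/F$ diverges at both ends. This works because $F$ is smooth on the closed simplex and vanishes on its boundary.
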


\subsection{Key ideas of the proof}

For $m\geq4$, the relaxation result in \cite{EMS} follows from the positivity of the symmetrized kernel.  This property fails when $m=3$, since
$$
 K^3_{1/(\partial_{\theta\theta}+4)}(\theta)
 =\frac{3\pi}{10}\left|\sin\frac{3\theta}{2}\right|-\frac7{20}.
$$
We recover the missing sign by finding key monotonic quantities in two different ways.

For gradient growth, let $q=g_\theta$ and
$$
 \mathcal H(t):=\int_{\mathbb T_L}q\log|q|\dd\theta,
 \qquad
 \mathcal H'(t)
 =2\int_{\mathbb T_L}(G_{\theta\theta}^2-4G_\theta^2)\dd\theta
 \geq10\|G_\theta(t)\|_2^2.
$$
Suppose that $\|q(t_n)\|_{L^p}$ stays bounded for some $t_n\to+\infty$.  The corresponding bound for $\mathcal H(t_n)$ and the monotonicity of $\mathcal H$ give
$$
 \int_0^\infty\|G_\theta(t)\|_2^2\dd t<\infty.
$$
On the other hand, the bound for $q(t_n)$ propagates to a fixed time interval around each $t_n$.  The functions on these time intervals have compact closure in $C^0$, while
$$
 \osc g(t)=\osc g_0>0.
$$
Since $G_\theta=0$ only when $g$ is constant, compactness gives
$$
 \|G_\theta(t_n+\sigma)\|_2\geq c,
 \qquad |\sigma|\leq\delta,
$$
for some $c,\delta>0$ independent of $n$.  Taking a subsequence for which these time intervals are disjoint contradicts the preceding time integrability.  The same proof applied to $-g(-t)$ gives the result for negative time.

For relaxation, positive and negative components of $D_\theta g$ are transported separately.  In the projective coordinate $z=\tan(\theta-c)$, write the coordinates of four ordered particles as $x_1<x_2<x_3<x_4$ and set
$$
 U(t,z):=\frac{u(t,\theta)}{\cos^2(\theta-c)},
 \qquad
 R:=\frac{(x_2-x_1)(x_4-x_3)}
          {(x_3-x_1)(x_4-x_2)}.
$$
Direct differentiation gives
$$
\begin{aligned}
 \frac{\dd}{\dd t}\log R=L_xU,
 \qquad
 L_xU
 &:=\frac{U(x_2)-U(x_1)}{x_2-x_1}
 +\frac{U(x_4)-U(x_3)}{x_4-x_3}\\
 &\quad
 -\frac{U(x_3)-U(x_1)}{x_3-x_1}
 -\frac{U(x_4)-U(x_2)}{x_4-x_2}.
\end{aligned}
$$
Moreover,
$$
 D_z^3U=z_\#\bigl(2\cos^2(\theta-c)D_\theta g\bigr),
 \qquad
 L_xU=\int_{[x_1,x_4]}P_x(s)\dd(D_z^3U)(s),
$$
where $P_x$ is the third-order Peano kernel associated with $L_x$ and $P_x>0$ on $(x_1,x_4)$.  The identities $L_x(1)=L_x(z)=L_x(z^2)=0$ explain why only $D_z^3U$ appears.  Since $P_x>0$ in the open interval, the preceding formula gives
$$
 \frac{\dd}{\dd t}\log R\geq0
$$
on every positive transported component.  On a negative component, the same formula gives
$$
 \frac{\dd}{\dd t}\log(1-R)\geq0.
$$

These two monotonicities also determine the possible limiting atoms.  For a positive component, two different limiting atoms would allow us to choose four material points such that, along the limiting sequence,
$$
 (x_1,x_2,x_3,x_4)\longrightarrow(a,a,b,b).
$$
Then $R\to0$, which contradicts $R(t)\geq R(0)>0$.  For a negative component, three different limiting atoms give instead
$$
 (x_1,x_2,x_3,x_4)\longrightarrow(a,b,b,d),
$$
and hence $R\to1$.  This contradicts $1-R(t)\geq1-R(0)>0$.  Therefore every forward limiting positive component is one atom and every forward limiting negative component has at most two atoms. For countably many components, their masses are summable.  After taking a diagonal subsequence, the derivative of every limiting profile is therefore a convergent sum of the limiting atomic measures above together with the transported initial atoms.  Hence the limiting derivative is purely atomic. In particular, neither an absolutely continuous part nor a Cantor part can remain.  This replaces the kernel positivity used for $m\geq4$.

\bigskip

The paper is organized as follows.  \Cref{sec: 2} proves gradient growth and compactness of time translates.  The projective monotonicity and limits of the transported measures leading to atomicity and relaxation to jump profiles are developed in \cref{sec: 3}, where Theorem \ref{thm: 1.3}, Theorem \ref{thm: 1.4}, Corollary \ref{cor: 1.5} and Corollary \ref{cor: 1.7} are proved.  The equations for finitely many jumps and the three-jump classification (Theorem \ref{thm: 1.8}) are proved in \cref{sec: 4}.

\section{Gradient growth and compactness}
\label{sec: 2}

In this section, we will prove Theorem \ref{thm: 1.1}. Subsections 2.1 and 2.2 contain the preliminary results and the proof, respectively. Moreover, Subsection 2.3 provides the compactness and flow-continuity tools needed for the analysis of time-translation limits and omega-limit sets in the subsequent sections.

\subsection{Flow estimates and profiles with finitely many jumps}

Since the Fourier multiplier of $A=\partial_{\theta\theta}+4$ never vanishes on $\mathbb T_L$, the operator has a periodic Green kernel $K_L$.  In one dimension, $K_L$ is continuous, and thus its first derivative is bounded with one jump per period, and
\begin{equation}
 A^{-1}f=K_L*f.
 \label{eq: 2.1}
\end{equation}
Using Young's inequality, first with $K_L$ and then with its derivative $K_L'$, yields
\begin{align}
 \|A^{-1}f\|_{L^\infty}
 &\leq \|K_L\|_{L^1}\|f\|_{L^\infty},
 \label{eq: 2.2}\\
 \|\partial_\theta A^{-1}f\|_{L^\infty}
 &\leq \|K_L'\|_{L^1}\|f\|_{L^\infty}.
 \label{eq: 2.3}
\end{align}
Consequently, with a constant depending only on $L$,
\begin{equation}
 \|u\|_{W^{1,\infty}}
 =2\|A^{-1}g\|_{W^{1,\infty}}
 \leq C_L\|g\|_{L^\infty}.
 \label{eq: 2.4}
\end{equation}
The same kernel satisfies the smoothing estimate used in the compactness arguments.  For $h\in L^1$,
\begin{equation}
 \|2A^{-1}h\|_{C^1}
 \leq 2\bigl(\|K_L\|_{L^\infty}
              +\|K_L'\|_{L^\infty}\bigr)\|h\|_{L^1}
 \leq C_L\|h\|_{L^1}.
 \label{eq: 2.5}
\end{equation}
Although $K_L'$ has one jump per period, $K_L'*h$ is continuous.  In fact, translation continuity in $L^1$ shows that
$$
 \|(K_L'*h)(\cdot+\eta)-(K_L'*h)(\cdot)\|_\infty
 \leq \|K_L'\|_\infty
       \|h(\cdot+\eta)-h(\cdot)\|_1\longrightarrow0.
$$
Since $\mathbb T_L$ has finite measure, every bounded solution belongs to $L^1$.  Taking $h=g(t)$ in \eqref{eq: 2.5} shows that $u(t)=2A^{-1}g(t)\in C^1(\mathbb T_L)$ at each fixed time.  This $C^1$ regularity and the uniform estimate \eqref{eq: 2.4} justify the classical flow, its spatial derivative, and the pointwise Jacobian equation used below, without requiring any derivative of $g$.

On finite time intervals, $u\in L^\infty_tC^1_\theta$.  By classical characteristic theory, $g(t)=g_0\circ\chi(t)^{-1}$.  The Sobolev and $BV$ chain rules for the $C^1$ bi-Lipschitz flow then imply, respectively, \eqref{eq: 1.9} and $Dg(t)=\chi(t)_\#Dg_0$ at the stated regularity.

Writing $\osc f:=\operatorname*{ess\,sup}f-\operatorname*{ess\,inf}f$, the transport identity preserves the essential range, and hence
\begin{equation}
 \|g(t)\|_\infty=\|g_0\|_\infty,
 \qquad
 \osc g(t)=\osc g_0.
 \label{eq: 2.6}
\end{equation}
Set
\begin{equation}
 \Lambda:=C_L\|g_0\|_\infty,
 \label{eq: 2.7}
\end{equation}
where $C_L$ is the same as that in \eqref{eq: 2.4}. For $s\in\mathbb R$, let $\widehat\chi(t;s,\alpha)$ be the degree-one lift of the flow starting at time $s$, defined by
$$
 \partial_t\widehat\chi(t;s,\alpha)
 =u(t,\widehat\chi(t;s,\alpha)),
 \qquad
 \widehat\chi(s;s,\alpha)=\alpha,
 \qquad
 \widehat\chi(t;s,\alpha+L)=\widehat\chi(t;s,\alpha)+L.
$$
Set $J(t;s,\alpha):=\partial_\alpha\widehat\chi(t;s,\alpha)$.  Then
\begin{equation}
 \partial_tJ(t;s,\alpha)
 =u_\theta(t,\widehat\chi(t;s,\alpha))J(t;s,\alpha),
 \qquad J(s;s,\alpha)=1.
 \label{eq: 2.8}
\end{equation}
Using \eqref{eq: 2.4} in \eqref{eq: 2.8} and integrating, we obtain
\begin{equation}
 e^{-\Lambda|t-s|}
 \leq J(t;s,\alpha)
 \leq e^{\Lambda|t-s|}.
 \label{eq: 2.9}
\end{equation}
In particular, the flow is order-preserving and bi-Lipschitz for every finite time.

The Jacobian bound also implies the finite-time $W^{1,p}$ estimates that will be used later.  The relative-time version of \eqref{eq: 1.9} is
$$
 q(t,\widehat\chi(t;s,\alpha))J(t;s,\alpha)=q(s,\alpha).
$$
Using this identity and the change of variables $\theta=\widehat\chi(t;s,\alpha)$ modulo $L$, we obtain, for $1<p<\infty$,
\begin{align}
 \|q(t)\|_p^p
 &=\int_{\mathbb T_L}|q(t,\theta)|^p\dd\theta
 \notag
 =\int_{\mathbb T_L}
   \left|\frac{q(s,\alpha)}{J(t;s,\alpha)}\right|^p
   J(t;s,\alpha)\dd\alpha
 \notag\\
 &=\int_{\mathbb T_L}|q(s,\alpha)|^p
   J(t;s,\alpha)^{1-p}\dd\alpha
 \notag
 \leq e^{(p-1)\Lambda|t-s|}\|q(s)\|_p^p,
 \label{eq: 2.10}
\end{align}
which leads to
\begin{equation}
 \|q(t)\|_p
 \leq e^{(1-1/p)\Lambda|t-s|}\|q(s)\|_p.
 \label{eq: 2.11}
\end{equation}
For $p=+\infty$, \eqref{eq: 1.9} and the lower bound in \eqref{eq: 2.9} imply
\begin{equation}
 \|q(t)\|_\infty
 \leq e^{\Lambda|t-s|}\|q(s)\|_\infty.
 \label{eq: 2.12}
\end{equation}
Thus $W^{1,p}$ regularity propagates on every finite time interval.

Finally, since push-forward by a homeomorphism preserves the total variation of a signed measure, \eqref{eq: 1.8} yields
\begin{equation}
 \|Dg(t)\|_{\mathcal M}
 =\|Dg_0\|_{\mathcal M}.
 \label{eq: 2.13}
\end{equation}

\begin{lem}
\label{lem: 2.1}
If $f_n\to f$ in $L^1(\mathbb T_L)$ and
$$
 \sup_n\bigl(\|f_n\|_\infty+\|Df_n\|_{\mathcal M}\bigr)<\infty,
$$
then $f_n\to f$ in $W^{\alpha,r}(\mathbb T_L)$ for every $1\leq r<\infty$ and $0\leq\alpha<1/r$.  More precisely, if $I\subset\mathbb R$ is any interval, $f_n(\tau)\to f(\tau)$ in $L^1$ uniformly for $\tau\in I$, and
$$
 \sup_{n,\,\tau\in I}
 \bigl(\|f_n(\tau)\|_\infty+\|Df_n(\tau)\|_{\mathcal M}\bigr)<\infty,
$$
then the convergence in every such $W^{\alpha,r}(\mathbb{T}_L)$ is uniform for $\tau\in I$.
\end{lem}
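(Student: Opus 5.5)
The plan is to interpolate between $L^1$ convergence and the uniform $BV\cap L^\infty$ bound, by controlling the $L^r$ part and the Gagliardo seminorm separately. Set $h_n:=f_n-f$ and let $M$ bound $\|f_n\|_\infty+\|Df_n\|_{\mathcal M}$. Lower semicontinuity of total variation under $L^1$ convergence, together with the $L^1$ convergence itself (passing to an a.e. subsequence for the sup bound), gives $\|f\|_\infty+\|Df\|_{\mathcal M}\leq M$. Hence
\[
\|h_n\|_\infty\leq 2M,\qquad \|Dh_n\|_{\mathcal M}\leq 2M.
\]
For the $L^r$ part I will use
\[
\|h_n\|_r^r\leq (2M)^{r-1}\|h_n\|_1\to0 .
\]
This already settles the case $\alpha=0$.

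For the seminorm with $0<\alpha<1/r$, write $\eta=\theta-\theta'$ and split the double integral into $d_L(\eta,0)<\delta$ and $d_L(\eta,0)\ge\delta$. The key elementary estimate is the translation bound for $BV$ functions,
\[
\int_{\mathbb T_L}|h(\theta+\eta)-h(\theta)|\dd\theta\leq |\eta|\,\|Dh\|_{\mathcal M}.
\]
I will prove it by mollification, or directly from $h(\theta+\eta)-h(\theta)=Dh((\theta,\theta+\eta])$ for a.e. $\theta$ plus Fubini. Combining this with $|h(\theta+\eta)-h(\theta)|^r\leq (4M)^{r-1}|h(\theta+\eta)-h(\theta)|$ gives
\[
\int_{\mathbb T_L}|h_n(\theta+\eta)-h_n(\theta)|^r\dd\theta\leq C_M|\eta| .
\]
The near-diagonal piece is then bounded by
\[
C_M\int_{|\eta|<\delta}|\eta|^{-\alpha r}\dd\eta=C_M'\delta^{1-\alpha r},
\]
which is small uniformly in $n$ because $\alpha r<1$. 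This is exactly where the condition $\alpha<1/r$ enters. For the far piece, the weight is bounded by $\delta^{-1-\alpha r}$, and the integrand is at most $2^{r-1}(|h_n(\theta+\eta)|^r+|h_n(\theta)|^r)$. That piece is therefore bounded by $C\delta^{-1-\alpha r}\|h_n\|_r^r\to0$. Choosing $\delta$ first and then $n$ large gives $[h_n]_{W^{\alpha,r}}\to0$.

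For the uniform version, the same argument applies with $n$ replaced by the pair $(n,\tau)$. The bounds for $f(\tau)$ again follow from lower semicontinuity, now with a constant independent of $\tau$. Every estimate above then depends only on $M$ and on $\sup_{\tau\in I}\|h_n(\tau)\|_1$, and the latter tends to zero by hypothesis. The only step needing care is the $BV$ translation estimate together with the lower-semicontinuity transfer of the bounds to the limit $f$. I expect both to be routine, so there is no serious obstacle. The main point is that the near-diagonal integral is controlled uniformly in $n$ exactly in the range $\alpha r<1$.
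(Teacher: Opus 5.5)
Your proposal is correct and follows essentially the same route as the paper. Both arguments use $L^\infty$–$L^1$ interpolation for the $L^r$ part, the $BV$ translation estimate combined with the $L^\infty$ bound near the diagonal, and $L^1$ smallness away from it, splitting at $\delta$ with $\alpha r<1$. The only difference is cosmetic: the paper packages both regimes into the single bound $\min\{\|h_n\|_1,z\}$, whereas you estimate the far piece through $\|h_n\|_r^r\lesssim\|h_n\|_1$.
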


\begin{proof}
Lower semicontinuity shows that $f$ belongs to $BV$ with the same uniform variation bound.  After passing to an almost-everywhere convergent subsequence, the uniform $L^\infty$ bound passes to $f$.  Let $h_n=f_n-f$.  Interpolation implies $\|h_n\|_r^r\leq\|h_n\|_\infty^{r-1}\|h_n\|_1\to0$.  This proves the case $\alpha=0$.  Suppose that $0<\alpha<1/r$.  For $0<z<L/2$, the $BV$ translation estimate and the triangle inequality imply
$$
\|h_n(\cdot+z)-h_n\|_1
 \le \min\{2\|h_n\|_1,\,
 z\|Dh_n\|_{\mathcal M}\}.
$$
Together with the uniform $L^\infty$ and BV bounds, this yields
$$
\|h_n(\cdot+z)-h_n\|_r^r
 \le
 \|h_n(\cdot+z)-h_n\|_\infty^{r-1}
 \|h_n(\cdot+z)-h_n\|_1
 \lesssim \min\{\|h_n\|_1,z\}.
$$
Hence the periodic translation formula for the Gagliardo seminorm yields
$$
[h_n]_{W^{\alpha,r}}^r
 \lesssim
 \int_0^{L/2}
 \frac{\min\{\|h_n\|_1,z\}}{z^{1+\alpha r}}\dd z.
$$
Splitting at $0<\delta<L/2$, we obtain
$$
[h_n]_{W^{\alpha,r}}^r
 \lesssim
 \delta^{1-\alpha r}
 +\|h_n\|_1
 \int_\delta^{L/2} z^{-1-\alpha r}\dd z.
$$
Since $\alpha r<1$, the first term in the right-hand side of the above inequality can be made arbitrarily small by letting $\delta\to0$, while for fixed $\delta$ the second term tends to zero as $n\to\infty$.  The same estimates, with $\|h_n\|_1$ replaced by $\sup_{\tau\in I}\|h_n(\tau)\|_1$, prove the uniform version.
\end{proof}

The condition $\alpha r<1$ is sharp when the limit has a nonzero jump: the endpoint Gagliardo seminorm diverges for $r>1$, while for $r=1$ the distributional derivative has an atom and hence is not in $L^1$.

\begin{lem}\label{lem: 2.2}
For every $B\geq0$, every integer $N\geq0$, $1\leq r<\infty$, and $0\leq\alpha<1/r$, $\mathcal P_N(B)$ is compact in $W^{\alpha,r}(\mathbb T_L)$ and invariant under the flow of \eqref{eq: 1.4}.
\end{lem}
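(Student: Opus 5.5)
Compactness and invariance are handled separately, with compactness reduced to $L^1$-compactness through \cref{lem: 2.1}.

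\emph{Compactness.} Every $f\in\mathcal P_N(B)$ has the form
$$
 f=\sum_{i}v_i\mathbf 1_{(a_i,a_{i+1})}
$$
with at most $N$ cyclically ordered jump points $a_i$ (for $N\geq1$; for $N=0$ the constants) and values $|v_i|\leq B$. Hence $\|Df\|_{\mathcal M}\leq 2NB$, so $\mathcal P_N(B)$ is uniformly bounded in $L^\infty\cap BV$.

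Take a sequence $f_n\in\mathcal P_N(B)$ and pad to exactly $N$ jump points, allowing repeated points and zero jumps. By compactness of $\mathbb T_L^N\times[-B,B]^N$, we extract a subsequence along which the ordered jump points and the values converge. The functions then converge almost everywhere, except near the limit jump points, to a step function $f$. Since $f$ has at most $N$ jumps after merging coincident points and dropping zero jumps, $f\in\mathcal P_N(B)$. Dominated convergence gives $f_n\to f$ in $L^1$, and \cref{lem: 2.1} upgrades this to convergence in $W^{\alpha,r}$. Thus $\mathcal P_N(B)$ is sequentially compact, hence compact, in $W^{\alpha,r}$.

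\emph{Invariance.} Let $g_0\in\mathcal P_N(B)$. It is bounded, so the Yudovich-type solution exists and $u\in C^1$ with the bi-Lipschitz flow of \eqref{eq: 2.9}. By \eqref{eq: 1.8}, $Dg(t)=\chi(t)_\#Dg_0$. A homeomorphism maps a set of $\leq N$ points to a set of $\leq N$ points, and the push-forward of a finite atomic measure is supported on the image of its support, so $\#\supp Dg(t)\leq N$. Finally, \eqref{eq: 2.6} gives $\|g(t)\|_\infty=\|g_0\|_\infty\leq B$, so $g(t)\in\mathcal P_N(B)$ for all $t\in\mathbb R$; the backward branch is covered because the flow is defined for all real times. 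The $BV$ chain rule for \eqref{eq: 1.8} at $BV$ regularity was justified before \eqref{eq: 2.6}, so this step is immediate.

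\emph{Main obstacle.} The only delicate point is the bookkeeping in the compactness step when jump points collide in the limit. Colliding points merge into one jump, whose size is the sum of the limiting jumps, and zero jumps are dropped, so the count can only decrease. Convergence in $L^1$ is unaffected, because on each arc between consecutive limit points the values $f_n$ eventually converge to the corresponding $v_i$, and the neighborhoods of the limit jump points where this fails have vanishing measure.
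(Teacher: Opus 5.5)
Your proposal is correct and follows essentially the paper's route: you parametrize by (possibly coincident) jump positions and values in a compact set to get $L^1$ compactness, use the uniform bound $\|Df\|_{\mathcal M}\le 2NB$ with \cref{lem: 2.1} to upgrade to $W^{\alpha,r}$, and obtain invariance from $Dg(t)=\chi(t)_\#Dg_0$ in \eqref{eq: 1.8} together with conservation of $\|g(t)\|_\infty$. The only difference is cosmetic: the paper argues via continuity of the parameter map from the compact parameter set into $L^1$, whereas you extract convergent subsequences directly.
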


\begin{proof}
For compactness, observe that a function whose distributional derivative is supported at $k$ points is constant on each connected component of the complement of those points.  Hence it is a step function with at most $k$ jumps.  The zero-jump stratum is the compact family of constants $[-B,B]$.  Fix $1\leq k\leq N$.  After choosing a cut at $0$, represent a $k$-jump function by ordered jump positions and the constant values on the intervening arcs,
\begin{equation}
 0\leq a_1\leq\cdots\leq a_k\leq L,
 \qquad (c_1,\ldots,c_k)\in[-B,B]^k.
 \label{eq: 2.14}
\end{equation}
Take the value $c_k$ on $[0,a_1)\cup[a_k,L)$ and $c_i$ on $[a_i,a_{i+1})$, $1\leq i<k$.  Coincident $a_i$ are allowed.  They represent a stratum with fewer than $k$ jumps.  If parameters converge, the values converge uniformly away from intervals whose total length tends to zero.  More explicitly, if $f_n$ and $f$ correspond to convergent parameters, then
$$
 \|f_n-f\|_1
 \leq L\max_i|c_{i,n}-c_i|
 +C_k(2B)\sum_i|a_{i,n}-a_i|\longrightarrow0,
$$
where $C_k$ depends only on $k$. Thus the parameter map from the compact set in \eqref{eq: 2.14} to $L^1$ is continuous.  Since every member has total variation at most $2BN$, \cref{lem: 2.1} and the finite union over $0\leq k\leq N$ prove the stated compactness.

For invariance, let
$$
 Dg_0=\sum_{i=1}^k\Delta_i\delta_{a_i},
 \qquad k\leq N.
$$
Equation \eqref{eq: 1.8} implies that
$$
 Dg(t)=\sum_{i=1}^k\Delta_i\delta_{\chi(t,a_i)}.
$$
The jump sizes and the constant values are preserved.  Distinct jumps cannot collide at a finite time: on an ordered lift, \eqref{eq: 2.9} implies
\begin{equation}
 |\widehat\chi(t,\alpha)-\widehat\chi(t,\beta)|
 \geq e^{-\Lambda|t|}|\alpha-\beta|.
 \label{eq: 2.15}
\end{equation}
Therefore $g(t)\in\mathcal P_N(B)$ for every finite $t$.
\end{proof}

\subsection{Gradient growth}

Set $q=g_\theta$ and suppose that either $q_0\in L^p$ for some $p>1$ or \eqref{eq: 1.12} holds.  The Jacobian bounds propagate these conditions on every finite time interval.  Set
\begin{equation}
 \mathcal H(t):=\int_{\mathbb T_L}q(t,\theta)\log|q(t,\theta)|\dd\theta,
 \label{eq: 2.16}
\end{equation}
where the convention $0\log0:=0$ is used. The integral is finite since $|r\log|r||\leq C(1+\Phi(|r|))$, while $\Phi(|r|)\leq C_p(1+|r|^p)$ for every $p>1$.

The identity \eqref{eq: 1.9} avoids differentiating through the zero set of $q$ and leads to
$$
 q(t,\widehat\chi(t,\alpha))=\frac{q_0(\alpha)}{J(t,\alpha)}.
$$
By a change of variables $\theta=\widehat\chi(t,\alpha)$ in \eqref{eq: 2.16}, we obtain
\begin{align}
 \mathcal H(t)
 &=\int_{\mathbb T_L}
   \frac{q_0(\alpha)}{J(t,\alpha)}
   \log\left|\frac{q_0(\alpha)}{J(t,\alpha)}\right|
   J(t,\alpha)\dd\alpha
 \notag\\
 &=\int_{\mathbb T_L}q_0(\alpha)\log|q_0(\alpha)|\dd\alpha
   -\int_{\mathbb T_L}q_0(\alpha)\log J(t,\alpha)\dd\alpha
 \notag\\
 &=\mathcal H(0)
   -\int_{\mathbb T_L}q_0(\alpha)\log J(t,\alpha)\dd\alpha.
 \label{eq: 2.17}
\end{align}
The integrand in the last term is dominated on finite time intervals by $|q_0|\Lambda|t|$, so differentiation under the integral is justified.  From \eqref{eq: 2.8},
$$
 \frac{J_t(t,\alpha)}{J(t,\alpha)}
 =u_\theta(t,\widehat\chi(t,\alpha)).
$$
Using again $q_0(\alpha)\dd\alpha=q(t,\theta)\dd\theta$, we obtain
\begin{align}
 \mathcal H'(t)
 &=-\int_{\mathbb T_L}q_0(\alpha)
       \frac{J_t}{J}(t,\alpha)\dd\alpha
 =-\int_{\mathbb T_L}u_\theta(t,\theta)q(t,\theta)\dd\theta
 =-2\int_{\mathbb T_L}G_\theta g_\theta\dd\theta.
 \label{eq: 2.18}
\end{align}
Since $g=G_{\theta\theta}+4G$,
$$
 g_\theta=G_{\theta\theta\theta}+4G_\theta.
$$
At the $L\log L$ endpoint, $g\in W^{1,1}$ and elliptic regularity implies $G\in W^{3,1}$, so the following integration by parts is legitimate. Substituting into \eqref{eq: 2.18} yields
\begin{align}
 \mathcal H'(t)
 &=-2\int G_\theta G_{\theta\theta\theta}\dd\theta
   -8\int G_\theta^2\dd\theta
 \notag\\
 &=2\int G_{\theta\theta}^2\dd\theta
   -8\int G_\theta^2\dd\theta
 \notag\\
 &=2\int_{\mathbb T_L}
   \bigl(G_{\theta\theta}^2-4G_\theta^2\bigr)\dd\theta.
 \label{eq: 2.19}
\end{align}

In view of the Fourier expansion of $G$ in the basis $\{e^{i3k\theta}\}$, we can determine the sign of $\mathcal H'$.  Since $G_\theta$ has zero mean,
$$
\begin{aligned}
 \|G_{\theta\theta}\|_2^2
 &=L\sum_{k\neq0}(3k)^4|\widehat G_k|^2,\\
 \|G_\theta\|_2^2
 &=L\sum_{k\neq0}(3k)^2|\widehat G_k|^2.
\end{aligned}
$$
For $k\neq0$, $(3k)^4\geq9(3k)^2$, whence
\begin{equation}
 \|G_{\theta\theta}\|_2^2\geq9\|G_\theta\|_2^2.
 \label{eq: 2.20}
\end{equation}
Combining \eqref{eq: 2.19} and \eqref{eq: 2.20}, we get
\begin{equation}
 \mathcal H'(t)
 \geq2(9-4)\|G_\theta(t)\|_2^2
 =10\|G_\theta(t)\|_2^2\geq0,
 \label{eq: 2.21}
\end{equation}
which implies that $t\mapsto\mathcal H(t)$ is nondecreasing.

\begin{proof}[Proof of \cref{thm: 1.1}]
We argue by contradiction for positive time. Suppose first that \eqref{eq: 1.11} failed. Then there would be $t_n\to+\infty$ and $M<\infty$ such that
\begin{equation}
 \|q(t_n)\|_p\leq M.
 \label{eq: 2.22}
\end{equation}
For $1<p<\infty$, by the elementary entropy bound, we obtain
$$
 |\mathcal H(t_n)|
 \leq C_{p,L}\bigl(1+\|q(t_n)\|_p^p\bigr)
 \leq C_{p,L}(1+M^p).
$$
For $p=\infty$,
$$
 |\mathcal H(t_n)|
 \leq L\max_{|r|\leq M}|r\log|r||.
$$
Hence in both cases there is a constant $C$ such that $\mathcal H(t_n)\leq C$ for all $n$.

Fix $T>0$ and choose $n$ with $t_n\geq T$.  Monotonicity implies
$$
 \mathcal H(T)\leq\mathcal H(t_n)\leq C.
$$
Therefore $\mathcal H$ is bounded above on $[0,\infty)$.  Integrating \eqref{eq: 2.21} from $0$ to $T$ yields
$$
 10\int_0^T\|G_\theta(t)\|_2^2\dd t
 \leq\mathcal H(T)-\mathcal H(0)
 \leq C-\mathcal H(0).
$$
Letting $T\to\infty$, we have
\begin{equation}
 \int_0^\infty\|G_\theta(t)\|_2^2\dd t<\infty.
 \label{eq: 2.23}
\end{equation}

The bound \eqref{eq: 2.22} propagates to a uniform time window around each $t_n$.  Fix $\delta>0$.  The estimates \eqref{eq: 2.11}--\eqref{eq: 2.12} imply that
\begin{equation}
 \sup_{|\sigma|\leq\delta}\|q(t_n+\sigma)\|_p\leq M',
 \label{eq: 2.24}
\end{equation}
where $M'$ depends only on $M,p,\delta,L$, and $\|g_0\|_\infty$, but not on $n$.

Consider the functions $f\in W^{1,p}(\mathbb T_L)$ satisfying
\begin{equation}
 \|f\|_\infty\leq\|g_0\|_\infty,
 \qquad \|f_\theta\|_p\leq M',
 \qquad \osc f=\osc g_0.
 \label{eq: 2.25}
\end{equation}
If $1<p<\infty$, the one-dimensional estimate
$$
 |f(\theta)-f(\eta)|
 \leq\|f_\theta\|_p|\theta-\eta|^{1-1/p}
$$
provides a common modulus of continuity.  If $p=\infty$, the functions are uniformly Lipschitz.  Thus this family has compact closure in $C^0$.  Uniform convergence preserves the oscillation, so every member of the closure still has oscillation $\osc g_0>0$.

The map $f\mapsto\partial_\theta A^{-1}f$ from $C^0(\mathbb T_L)$ to $L^2(\mathbb T_L)$ is continuous by the Green-kernel representation.  Its value can vanish only when $A^{-1}f$ is a constant, in which case $f=A(A^{-1}f)$ is also a constant.  Since no element of the compact closure is constant, there is $c>0$ such that $\|\partial_\theta A^{-1}f\|_2\geq c$ throughout this closure.

By \eqref{eq: 2.6} and \eqref{eq: 2.24}, every $g(t_n+\sigma)$ with $|\sigma|\leq\delta$ satisfies \eqref{eq: 2.25}. Since $\partial_\theta A^{-1}g=G_\theta$, we obtain
\begin{equation}
 \|G_\theta(t_n+\sigma)\|_2\geq c\quad\text{for}
 \quad |\sigma|\leq\delta.
 \label{eq: 2.26}
\end{equation}
Pass to a subsequence satisfying $t_{n+1}-t_n>2\delta$.  Then the intervals $[t_n-\delta,t_n+\delta]$ are disjoint, so we have
$$
 \int_0^\infty\|G_\theta(t)\|_2^2\dd t
 \geq\sum_{n\ \mathrm{large}}
 \int_{t_n-\delta}^{t_n+\delta}c^2\dd t
 =\sum_{n\ \mathrm{large}}2\delta c^2=+\infty,
$$
contradicting \eqref{eq: 2.23}.  Therefore $\|q(t)\|_p\to\infty$ as $t\to+\infty$, which proves the first conclusion in \eqref{eq: 1.11}.

For the $L\log L$ assertion, suppose instead that there are $t_n\to+\infty$ and $M<\infty$ such that
\begin{equation}
 \int_{\mathbb T_L}\Phi(|q(t_n)|)\dd\theta\leq M.
 \label{eq: 2.27}
\end{equation}
The bound in \eqref{eq: 2.27} implies $\sup_n|\mathcal H(t_n)|<\infty$.  Since $\mathcal H$ is nondecreasing by \eqref{eq: 2.21}, it is bounded above on $[0,\infty)$.  Integrating \eqref{eq: 2.21} leads to
\begin{equation}
 \int_0^\infty\|G_\theta(t)\|_2^2\dd t<\infty.
 \label{eq: 2.28}
\end{equation}
For $|\sigma|\leq\delta$, the relative-flow Jacobian belongs to $[e^{-\Lambda\delta},e^{\Lambda\delta}]$.  The transport identity and the elementary estimate $J\Phi(\xi/J)\leq C_\delta\Phi(\xi)$ for $e^{-\Lambda\delta}\leq J\leq e^{\Lambda\delta}$ imply
\begin{equation}
 \sup_{|\sigma|\leq\delta}
 \int_{\mathbb T_L}\Phi(|q(t_n+\sigma)|)\dd\theta\leq C_\delta M.
 \label{eq: 2.29}
\end{equation}
Consequently, the family $\{g(t_n+\sigma):|\sigma|\leq\delta\}$ has a common modulus of continuity. For every measurable $E\subset\mathbb T_L$ and $K>0$, split $E$ into the regions where $|q|\leq K$ and $|q|>K$.  On the latter, $|q|\leq\Phi(|q|)/\log(e+K)$, and therefore
\begin{equation}
 \int_E|q|\dd\theta
 \leq K|E|+\frac{1}{\log(e+K)}
       \int_{\mathbb T_L}\Phi(|q|)\dd\theta.
 \label{eq: 2.30}
\end{equation}
First choosing $K$ large and then $|E|$ small makes the right-hand side uniformly small.  Hence the functions $g(t_n+\sigma)$, $|\sigma|\leq\delta$, have compact $C^0$ closure, and range conservation keeps their oscillation equal to $\osc g_0>0$.  The continuous map $f\mapsto\partial_\theta A^{-1}f$ vanishes only on constants.  Compactness therefore provides a constant $c>0$ such that
$$
 \|G_\theta(t_n+\sigma)\|_2\geq c
 \qquad(|\sigma|\leq\delta).
$$
After extracting a subsequence with $t_{n+1}-t_n>2\delta$, the corresponding time windows are disjoint and
$$
 \int_0^\infty\|G_\theta(t)\|_2^2\dd t
 \geq\sum_{n\ \mathrm{large}}
 \int_{t_n-\delta}^{t_n+\delta}c^2\dd t
 =\sum_{n\ \mathrm{large}}2\delta c^2=+\infty.
$$
This contradicts \eqref{eq: 2.28} and thus proves \eqref{eq: 1.13} in positive time.

For backward time, define
\begin{equation}
 \widetilde g(t,\theta):=-g(-t,\theta),
 \qquad
 \widetilde G(t,\theta):=-G(-t,\theta).
 \label{eq: 2.31}
\end{equation}
Direct calculation yields
$$
\begin{aligned}
 \widetilde g_t+2\widetilde G\widetilde g_\theta
 &=g_t(-t,\theta)+2[-G(-t,\theta)][-g_\theta(-t,\theta)]\\
 &=g_t(-t,\theta)+2G(-t,\theta)g_\theta(-t,\theta)=0.
\end{aligned}
$$
Moreover, $A\widetilde G=\widetilde g$.  Thus $(\widetilde g,\widetilde G)$ solves the same equation.  Applying the positive-time results to $\widetilde g$ proves both negative-time assertions for $g$.
\end{proof}

\subsection{Time translates and flow continuity}

\begin{lem}\label{lem: 2.3}
Let $g_0\in BV(\mathbb T_L)$, let $g$ be the corresponding unique complete solution of \eqref{eq: 1.4}, and let $t_n\to+\infty$ or $t_n\to-\infty$.  Then, after passing to a subsequence, there is a complete $BV$ solution $\bar g$ such that, for every $1\leq r<\infty$ and $0\leq\alpha<1/r$,
\begin{equation}
 g(t_n+\cdot)\longrightarrow\bar g
 \quad\text{in }C_{\mathrm{loc}}
 (\mathbb R;W^{\alpha,r}(\mathbb T_L)).
 \label{eq: 2.32}
\end{equation}
For every $\sigma\in\mathbb R$,
\begin{equation}
 \|\bar g(\sigma)\|_\infty\leq\|g_0\|_\infty,
 \qquad
 \|D\bar g(\sigma)\|_{\mathcal M}
 \leq\|Dg_0\|_{\mathcal M}.
 \label{eq: 2.33}
\end{equation}
In particular, throughout this range, the set $\{\bar g(\sigma):\sigma\in\mathbb R\}$ has compact closure in $W^{\alpha,r}(\mathbb T_L)$.

\end{lem}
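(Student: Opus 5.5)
We will prove the lemma by an Arzelà–Ascoli argument in $C_{\mathrm{loc}}(\mathbb R;L^1)$, upgrade the topology with \cref{lem: 2.1}, and then pass to the limit in the equation.

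\textbf{Uniform bounds.} Set $g_n(\sigma):=g(t_n+\sigma)$. By \eqref{eq: 2.6} and \eqref{eq: 2.13}, the bounds $\|g_n(\sigma)\|_\infty=\|g_0\|_\infty$ and $\|Dg_n(\sigma)\|_{\mathcal M}=\|Dg_0\|_{\mathcal M}$ hold uniformly in $n$ and $\sigma$. By Helly's theorem, the set $\{g_n(\sigma)\}_{n,\sigma}$ is precompact in $L^1(\mathbb T_L)$.

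\textbf{Equicontinuity in time.} We write $g(t)=g(s)\circ\chi(s;t)$, where $\chi(s;t)$ is the relative flow from time $t$ back to time $s$. By \eqref{eq: 2.4}, $|\chi(s;t,\theta)-\theta|\leq\Lambda|t-s|$ pointwise. Using the BV translation estimate together with the bi-Lipschitz change of variables from \eqref{eq: 2.9}, we expect
$$
 \|g(t)-g(s)\|_{L^1}\leq C\,\Lambda|t-s|\,\|Dg_0\|_{\mathcal M}
$$
for $|t-s|\leq1$. One can see this directly by approximating $g(s)$ by smooth functions with controlled variation. Alternatively, use the weak form $g_t=-(ug)_\theta+u_\theta g$ and pair against a test function; but the $L^1$ estimate via the characteristic displacement is cleaner. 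This makes $\sigma\mapsto g_n(\sigma)$ uniformly Lipschitz into $L^1$.

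\textbf{Extraction.} Arzelà–Ascoli on each interval $[-k,k]$, combined with a diagonal extraction, gives $\bar g\in C(\mathbb R;L^1)$ with $g_n\to\bar g$ in $C_{\mathrm{loc}}(\mathbb R;L^1)$. The bounds \eqref{eq: 2.33} follow from a.e.\ convergence along subsequences and from lower semicontinuity of total variation. The uniform part of \cref{lem: 2.1} then upgrades the convergence to $C_{\mathrm{loc}}(\mathbb R;W^{\alpha,r})$. Precompactness of $\{\bar g(\sigma)\}$ in $W^{\alpha,r}$ follows from Helly's theorem in $L^1$, using \eqref{eq: 2.33} and \cref{lem: 2.1} once more.

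\textbf{Main obstacle: $\bar g$ is a solution.} Put $u_n=2A^{-1}g_n$. By \eqref{eq: 2.5}, $u_n\to\bar u:=2A^{-1}\bar g$ in $C_{\mathrm{loc}}(\mathbb R;C^1)$. Hence the flows $\widehat\chi_n(\sigma;0,\cdot)$ converge locally uniformly to the flow $\bar\chi$ of $\bar u$, by Grönwall with the uniform Lipschitz bound $\Lambda$.

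We then pass to the limit in $g_n(\sigma)\circ\chi_n(\sigma;0)=g_n(0)$. The compositions converge in $L^1$, because the maps are uniformly bi-Lipschitz and converge uniformly, and $L^1$ functions are continuous under translation. This gives $\bar g(\sigma)\circ\bar\chi(\sigma)=\bar g(0)$. Therefore $\bar g$ is the transport solution of \eqref{eq: 1.4} with velocity $2A^{-1}\bar g$, which is the unique solution by \cite[Proposition~3.5]{EJ}. Equivalently, one can pass to the limit in the weak formulation, since $u_ng_n\to\bar u\bar g$ in $L^1_{\mathrm{loc}}$.

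The delicate point is the justification of the composition limit. We handle it by splitting
$$
 g_n(\sigma)\circ\chi_n-\bar g(\sigma)\circ\bar\chi
 =\bigl(g_n(\sigma)-\bar g(\sigma)\bigr)\circ\chi_n
 +\bigl(\bar g(\sigma)\circ\chi_n-\bar g(\sigma)\circ\bar\chi\bigr).
$$
The first term is controlled by the Jacobian bound. The second is controlled by the BV translation estimate applied to $\bar g(\sigma)$.
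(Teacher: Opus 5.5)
Your proposal is correct and follows the paper's overall scheme: uniform $L^\infty$/$BV$ bounds, Lipschitz-in-time control in $L^1$, Arzel\`a--Ascoli with a diagonal extraction, the upgrade via the uniform part of \cref{lem: 2.1}, and identification of the limit through uniqueness. The difference is that you carry out the two substantive steps in Lagrangian rather than Eulerian form.

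\textbf{Equicontinuity.} The paper derives the weak identity \eqref{eq: 2.34}. It does this by differentiating the characteristic formula and integrating by parts in the $BV$ sense, and then dualizes against $\|\varphi\|_\infty\le1$. You instead write $g(t)=g(s)\circ\chi(s;t)$ and use the displacement bound $|\chi(s;t,\theta)-\theta|\le\Lambda|t-s|$. You then apply the shift estimate $\|f\circ\phi-f\|_{L^1}\le2\|\phi-\mathrm{id}\|_\infty\|Df\|_{\mathcal M}$, proved by Fubini for smooth approximants and passed to the limit with the Jacobian bound \eqref{eq: 2.9}. This is more elementary and gives the same type of Lipschitz bound.

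\textbf{Identification of the limit.} The paper passes to the limit in the conservative form \eqref{eq: 2.37}. That needs only $u_n\to\bar u$ in $C^1$ and $g_n\to\bar g$ in $L^1$. Your route additionally needs Gr\"onwall convergence of the flows, which the paper establishes only later in \eqref{eq: 3.32}, together with your two-term composition estimate. In return it yields the Lagrangian identity $\bar g(\sigma)\circ\bar\chi(\sigma;0)=\bar g(0)$ directly, which matches the characteristic formulation of the uniqueness result. Both identifications are valid, and you already note the paper's Eulerian version as an alternative.
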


\begin{proof}
The uniform spatial bounds \eqref{eq: 2.6} and \eqref{eq: 2.13}, together with one-dimensional $BV$ compactness, imply pointwise-in-time relative compactness in $L^1$.

For time equicontinuity, fix $t_0$ and use the flow starting at time $t_0$. For $\varphi\in C^1(\mathbb T_L)$, the characteristic formula reads
$$
 \int_{\mathbb T_L}\varphi(\theta)g(t,\theta)\dd\theta
 =\int_{\mathbb T_L}
   \varphi\bigl(\widehat\chi(t;t_0,\alpha)\bigr)g(t_0,\alpha)
   J(t;t_0,\alpha)\dd\alpha.
$$
The right-hand side is absolutely continuous in $t$.  Differentiating it, using \eqref{eq: 2.8}, changing variables, and integrating by parts in the $BV$ sense, we obtain, for $t_0<t$,
\begin{equation}
 \int_{\mathbb T_L}\varphi[g(t)-g(t_0)]\dd\theta
 =-\int_{t_0}^t\int_{\mathbb T_L}\varphi(\theta)u(\tau,\theta)
        \dd Dg(\tau)(\theta)\dd\tau.
 \label{eq: 2.34}
\end{equation}
The formula extends from $C^1$ to $C^0$ test functions by uniform approximation, since the right-hand side is bounded by the total variation of $Dg$.  The signed measure $[g(t)-g(t_0)]\dd\theta$ has total variation $\|g(t)-g(t_0)\|_{L^1}$.  Taking the supremum in \eqref{eq: 2.34} over $\|\varphi\|_\infty\leq1$ and using \eqref{eq: 2.4} and \eqref{eq: 2.13}, we obtain
\begin{equation}
 \|g(t)-g(t_0)\|_{L^1}
 \leq C_L\|g_0\|_\infty\|Dg_0\|_{\mathcal M}|t-t_0|.
 \label{eq: 2.35}
\end{equation}

Arzel\`a--Ascoli on compact time intervals and a diagonal argument yield convergence in $C_{\mathrm{loc}}(\mathbb R;\allowbreak L^1)$ to a function $\bar g$.  Set
$$
 g_n(\sigma):=g(t_n+\sigma),
 \qquad
 u_n(\sigma):=2A^{-1}g_n(\sigma).
$$
Estimate \eqref{eq: 2.5} implies
\begin{equation}
 u_n\longrightarrow\bar u:=2A^{-1}\bar g
 \quad\text{in }C_{\mathrm{loc}}(\mathbb R;C^1(\mathbb T_L)).
 \label{eq: 2.36}
\end{equation}
To pass to the equation without multiplying a limiting measure by a merely weakly convergent coefficient, we use the equation satisfied by $g_n$ from the definition of $u_n$ and $A^{-1}$
\begin{equation}
 (g_n)_t+(u_ng_n)_\theta=(u_n)_\theta g_n.
 \label{eq: 2.37}
\end{equation}
Here $u_ng_n\to\bar u\bar g$ and $(u_n)_\theta g_n\to\bar u_\theta\bar g$ locally in $L^1$.  More precisely, on every compact time interval $I$,
$$
\begin{aligned}
 \sup_{\sigma\in I}\|u_ng_n-\bar u\bar g\|_1
 &\leq L\|g_0\|_\infty
       \sup_{\sigma\in I}\|u_n-\bar u\|_\infty
   +\sup_{\sigma\in I}\|\bar u\|_\infty
       \sup_{\sigma\in I}\|g_n-\bar g\|_1,\\
 \sup_{\sigma\in I}\|(u_n)_\theta g_n-\bar u_\theta\bar g\|_1
 &\leq L\|g_0\|_\infty
       \sup_{\sigma\in I}\|(u_n)_\theta-\bar u_\theta\|_\infty
   +\sup_{\sigma\in I}\|\bar u_\theta\|_\infty
       \sup_{\sigma\in I}\|g_n-\bar g\|_1.
\end{aligned}
$$
The right-hand sides of both inequalities above tend to $0$ by \eqref{eq: 2.36}.  Therefore the conservative form \eqref{eq: 2.37} passes to distributions, and $\bar g$ solves \eqref{eq: 1.4} on every finite time interval.  Because the construction works on every compact time interval, $\bar g$ is complete. Uniqueness in \cite[Proposition~3.5]{EJ}, together with the time reversal in \eqref{eq: 2.31}, identifies $\bar g$ as the unique complete solution through $\bar g(0)$.

For $1<s<\infty$, the explicit interpolation estimate from the common $L^\infty$ bound is
\begin{equation}
 \|g_n-\bar g\|_s^s
 \leq (2\|g_0\|_\infty)^{s-1}\|g_n-\bar g\|_1.
 \label{eq: 2.38}
\end{equation}
The uniform version of \cref{lem: 2.1} proves \eqref{eq: 2.32}.  The bounds \eqref{eq: 2.33} follow from weak-* lower semicontinuity.  Finally, if $(\sigma_n)\subset\mathbb R$ is arbitrary, then the functions $\bar g(\sigma_n)$ satisfy the same uniform $BV$ and $L^\infty$ bounds. One-dimensional $BV$ compactness and \cref{lem: 2.1} therefore yield a subsequence converging in every stated $W^{\alpha,r}$.
\end{proof}

\begin{lem}\label{lem: 2.4}
Assume $f_n,f\in BV(\mathbb T_L)$, and
\begin{equation}
 \sup_n\bigl(\|f_n\|_\infty+\|Df_n\|_{\mathcal M}\bigr)<\infty,
 \qquad f_n\longrightarrow f\quad\text{in }L^1.
 \label{eq: 2.39}
\end{equation}
Let $S_t f$ denote the solution of \eqref{eq: 1.4} at time $t$ with initial value $f$, then, for every $T>0$ and $1\leq r<\infty$, $0\leq\alpha<1/r$, we have
\begin{equation}
 S_tf_n\longrightarrow S_tf
 \quad\text{in }C([-T,T];W^{\alpha,r}).
 \label{eq: 2.40}
\end{equation}
\end{lem}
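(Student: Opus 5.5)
The plan is a compactness-and-uniqueness argument modeled on the proof of \cref{lem: 2.3}. Set $g_n(t):=S_tf_n$ and $\bar g(t):=S_tf$ for $t\in[-T,T]$. By \eqref{eq: 2.6} and \eqref{eq: 2.13}, both families satisfy
$$
\|g_n(t)\|_\infty\leq B,
\qquad
\|Dg_n(t)\|_{\mathcal M}\leq V,
$$
where $B$ and $V$ are the suprema in \eqref{eq: 2.39]}. The time-Lipschitz estimate \eqref{eq: 2.35} gives
$$
\|g_n(t)-g_n(s)\|_{L^1}\leq C_LBV|t-s|
$$
uniformly in $n$. So the family $\{g_n\}$ is equicontinuous from $[-T,T]$ into $L^1$. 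By one-dimensional $BV$ compactness it is pointwise relatively compact in $L^1$. Arzel\`a--Ascoli then gives a subsequence with $g_{n_k}\to\tilde g$ in $C([-T,T];L^1)$.

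Next I identify the limit. Exactly as in the proof of \cref{lem: 2.3}, I use \eqref{eq: 2.5} to get $u_{n_k}=2A^{-1}g_{n_k}\to\tilde u:=2A^{-1}\tilde g$ in $C([-T,T];C^1)$. I then pass to the limit in the conservative form \eqref{eq: 2.37}, where both products converge in $L^1$ uniformly in time. This shows $\tilde g$ is a bounded distributional solution of \eqref{eq: 1.4} on $[-T,T]$. Its initial value is $\tilde g(0)=\lim g_{n_k}(0)=\lim f_{n_k}=f$ in $L^1$.

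Uniqueness in the bounded class (\cite[Proposition~3.5]{EJ}, with the time reversal \eqref{eq: 2.31} for negative times) gives $\tilde g=\bar g$ on $[-T,T]$. The limit is therefore independent of the subsequence. The standard subsequence argument then upgrades this to convergence of the whole sequence: $S_tf_n\to S_tf$ in $C([-T,T];L^1)$.

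Finally, I apply the uniform-in-$\tau$ version of \cref{lem: 2.1} on $I=[-T,T]$. Its hypotheses hold because of the uniform bounds $B$ and $V$, and the uniform $L^1$ convergence just established. This yields \eqref{eq: 2.40} for every $1\leq r<\infty$ and $0\leq\alpha<1/r$.

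The step I expect to be delicate is the identification of the limit. One must make sure the weak formulation passes to the limit including the initial datum, and that the uniqueness class of \cite{EJ} contains $\tilde g$ as an $L^\infty$ weak solution with velocity $2A^{-1}\tilde g$. This is handled exactly as in \cref{lem: 2.3}: testing \eqref{eq: 2.37} against $\varphi\in C_c^1((-T,T)\times\mathbb T_L)$ passes the equation to the limit, and the uniform-in-time $L^1$ convergence pins down the initial value.
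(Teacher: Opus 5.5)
Your proposal is correct and follows essentially the same route as the paper's proof. Both arguments use the uniform $L^\infty$, $BV$ and time-Lipschitz bounds from \eqref{eq: 2.6}, \eqref{eq: 2.13} and \eqref{eq: 2.35}, extract a limit in $C([-T,T];L^1)$, identify it via the conservative form \eqref{eq: 2.37} with $C^1$-convergent velocities plus uniqueness and time reversal, upgrade to the full sequence by the subsequence principle, and conclude with the uniform version of \cref{lem: 2.1}.
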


\begin{proof}
The estimates \eqref{eq: 2.6}, \eqref{eq: 2.13}, and \eqref{eq: 2.35} provide uniform $L^\infty$, $BV$, and time equicontinuity bounds, depending only on the bound in \eqref{eq: 2.39} and on $T$.  Hence every subsequence has a further subsequence, denoted again by $f_n$, for which
\begin{equation}
 h_n(t):=S_tf_n\longrightarrow h(t)
 \quad\text{in }C([-T,T];L^1).
 \label{eq: 2.41}
\end{equation}
By \eqref{eq: 2.5}, the corresponding velocities $u_n(t):=2A^{-1}h_n(t)$ satisfy
\begin{equation}
 \sup_{|t|\leq T}\|u_n(t)-u(t)\|_{C^1}
 \leq C_L\sup_{|t|\leq T}\|h_n(t)-h(t)\|_1\longrightarrow0,
 \qquad u:=2A^{-1}h.
 \label{eq: 2.42}
\end{equation}
Rewriting the equation satisfied by $h_n$ as follows
$$
 (h_n)_t+(u_nh_n)_\theta=(u_n)_\theta h_n
$$
and using \eqref{eq: 2.41}--\eqref{eq: 2.42} together with the common $L^\infty$ bound, we obtain $u_nh_n\to uh$ and $(u_n)_\theta h_n\to u_\theta h$ in $C([-T,T];L^1)$.  Hence the equations pass to the distributional limit and $h$ solves \eqref{eq: 1.4}.  The identity $h(0)=f$ follows from $f_n\to f$ in $L^1$.  Uniqueness in \cite[Proposition~3.5]{EJ}, together with \eqref{eq: 2.31}, identifies $h(t)$ with $S_tf$.  Since every subsequence has a further subsequence with this limit, the full sequence converges in $C([-T,T];L^1)$.

Finally, the uniform $BV$ and $L^\infty$ bounds and the uniform part of \cref{lem: 2.1} imply \eqref{eq: 2.40}.
\end{proof}

\section{Atomicity and relaxation to jump profiles}
\label{sec: 3}

In this section, we prove Theorem \ref{thm: 1.3}, Theorem \ref{thm: 1.4}, Corollary \ref{cor: 1.5} and Corollary \ref{cor: 1.7}.  We first establish the preliminary results in subsections 3.1--3.3, and then give the proof of Theorem \ref{thm: 1.3} at the end of subsection 3.3. The strict inequality $L<\pi$ permits a real projective chart containing any finite set of material labels that lies strictly inside an interval on which $Dg$ has one sign.  The computations are local in time and agree on overlapping charts.  Every circular formula uses a single ordered lift of the quadruple rather than independently shifted representatives.  This convention also covers such an interval equal to the circle minus a cut point.

\subsection{Cross-ratio monotonicity on intervals of one sign}

Fix an oriented arc of length less than $\pi$.  Choose $c$ so that the closed arc does not meet $c+\pi/2$ modulo $\pi$, and set
\begin{equation}
 z=\tan(\theta-c),
 \qquad
 w(\theta)=\cos^2(\theta-c),
 \qquad
 U(t,z)=\frac{u(t,\theta(z))}{w(\theta(z))}.
 \label{eq: 3.1}
\end{equation}
Since
$$
 \frac{\dd z}{\dd\theta}=\sec^2(\theta-c)=\frac1w,
$$
we have
\begin{equation}
 D_z=wD_\theta.
 \label{eq: 3.2}
\end{equation}
The definition of $U$ is also dynamically natural: if $z(t)=\tan(\theta(t)-c)$ and $\dot\theta=u(t,\theta)$, then we have
$$
 \dot z=\frac1w\dot\theta=\frac uw=U(t,z).
$$

To compute the first three partial derivatives of $ U $ with respect to $z$, note that $z=\tan(\theta-c)$ and $w_\theta=-2wz$.  Using \eqref{eq: 3.2}, we obtain
\begin{align}
 U_z
 &=wD_\theta\left(\frac uw\right)
 =w\left(\frac{u_\theta}{w}-\frac{uw_\theta}{w^2}\right)
 =u_\theta+2zu.
 \label{eq: 3.3}
\end{align}
For the second derivative, use $z_\theta=\sec^2(\theta-c)=1/w$:
\begin{align}
 U_{zz}
 &=wD_\theta(u_\theta+2zu)
 \notag\\
 &=w\left(u_{\theta\theta}+\frac{2}{w}u+2zu_\theta\right)
 \notag\\
 &=wu_{\theta\theta}+2u
   +2\sin(\theta-c)\cos(\theta-c)u_\theta.
 \label{eq: 3.4}
\end{align}
Let $s_c=\sin(\theta-c)$ and $c_c=\cos(\theta-c)$.  Then $w=c_c^2$, $w_\theta=-2s_cc_c$, and $(s_cc_c)_\theta=c_c^2-s_c^2$.  Differentiating \eqref{eq: 3.4} before applying the final factor $w$ yields
$$
\begin{aligned}
 D_\theta(wu_{\theta\theta})
 &=-2s_cc_cu_{\theta\theta}+wu_{\theta\theta\theta},\\
 D_\theta(2u)&=2u_\theta,\\
 D_\theta(2s_cc_cu_\theta)
 &=2(c_c^2-s_c^2)u_\theta+2s_cc_cu_{\theta\theta}.
\end{aligned}
$$
The two $u_{\theta\theta}$ terms cancel.  Moreover,
$$
 2+2(c_c^2-s_c^2)=2+2\cos(2(\theta-c))=4c_c^2=4w.
$$
Therefore
\begin{align}
 U_{zzz}
 &=w\left(wu_{\theta\theta\theta}+4wu_\theta\right)
 =w^2(u_{\theta\theta\theta}+4u_\theta)
 =2w^2g_\theta,
 \label{eq: 3.5}
\end{align}
where the last equality uses the second equation in \eqref{eq: 1.6}.

On the chosen projective interval, the corresponding distributional identity is
\begin{equation}
 D_z^3U
 =z_\#\bigl(2\cos^2(\theta-c)D_\theta g\bigr).
 \label{eq: 3.6}
\end{equation}
To check the weight, in the absolutely continuous case $\dd z=w^{-1}\dd\theta$, and the density of the right-hand side with respect to $\dd z$ is
$$
 (2wg_\theta)\frac{\dd\theta}{\dd z}=2w^2g_\theta,
$$
which is exactly \eqref{eq: 3.5}. For general $BV$ data, we will apply a smoothing process to show that \eqref{eq: 3.6} is true. Fix a time and let $g_\varepsilon=\rho_\varepsilon*g$, where $\rho_\varepsilon$ is a smooth periodic approximate identity.  Next, we set
$$
u_\varepsilon:=2A^{-1}g_\varepsilon,
\qquad
U_\varepsilon(z):=
\frac{u_\varepsilon(\theta(z))}{w(\theta(z))}.
$$
Then $g_\varepsilon\to g$ in $L^1(\mathbb T_L)$ and
$$
Dg_\varepsilon\stackrel{*}{\rightharpoonup}Dg
\qquad\text{in }\mathcal M(\mathbb T_L).
$$
In addition, by \eqref{eq: 2.5}, we obtain
$$
u_\varepsilon\longrightarrow u
\qquad\text{in }C^1(\mathbb T_L).
$$
Hence, on every compact subchart on which $w$ is bounded away from zero, $U_\varepsilon\to U$ in $C^1$.  Since $g_\varepsilon$ is smooth, \eqref{eq: 3.5} implies that
$$
D_z^3U_\varepsilon
=z_\#\bigl(2wD_\theta g_\varepsilon\bigr)
$$
on this subchart.  Therefore, for $\psi\in C_c^\infty$ supported in the chosen $z$-chart, we have
$$
\begin{aligned}
\langle D_z^3U_\varepsilon,\psi\rangle
&=2\int_{\mathbb T_L}
   \psi(z(\theta))w(\theta)\,\dd Dg_\varepsilon(\theta)\\
&\longrightarrow
2\int_{\mathbb T_L}
   \psi(z(\theta))w(\theta)\,\dd Dg(\theta),
\end{aligned}
$$
where the convergence follows from the weak-* convergence of $Dg_\varepsilon$.  On the other hand, $U_\varepsilon\to U$ locally uniformly in the $z$-chart, and hence $D_z^3U_\varepsilon\to D_z^3U$ in distributions, which proves \eqref{eq: 3.6}.  In particular, $D_z^3U$ has the same sign as $D_\theta g$ on the chosen arc.

Let four material particles in one projective lift satisfy
\begin{equation}
 x_1<x_2<x_3<x_4,
 \qquad
 \dot x_i=U(t,x_i).
 \label{eq: 3.7}
\end{equation}
Define
\begin{equation}
 R(x_1,x_2,x_3,x_4)
 :=\frac{(x_2-x_1)(x_4-x_3)}
         {(x_3-x_1)(x_4-x_2)}\in(0,1).
 \label{eq: 3.8}
\end{equation}
Differentiating each logarithm yields
\begin{align}
 \frac{\dd}{\dd t}\log R
 &=\frac{U(x_2)-U(x_1)}{x_2-x_1}
   +\frac{U(x_4)-U(x_3)}{x_4-x_3}
 \notag\\
 &\quad-\frac{U(x_3)-U(x_1)}{x_3-x_1}
   -\frac{U(x_4)-U(x_2)}{x_4-x_2}
 =:L_xU.
 \label{eq: 3.9}
\end{align}

Set
\begin{equation}
 A_1:=x_2-x_1,
 \qquad B_1:=x_3-x_2,
 \qquad C_1:=x_4-x_3.
 \label{eq: 3.10}
\end{equation}
In the representation $L_xU=\sum_{i=1}^4\ell_iU(x_i)$, the coefficients of the four values $U(x_i)$ in \eqref{eq: 3.9} are
\begin{align}
 \ell_1&=-\frac1{A_1}+\frac1{A_1+B_1}
        =-\frac{B_1}{A_1(A_1+B_1)},
 \label{eq: 3.11}\\
 \ell_2&=\frac1{A_1}+\frac1{B_1+C_1},
 \label{eq: 3.12}\\
 \ell_3&=-\frac1{C_1}-\frac1{A_1+B_1},
 \label{eq: 3.13}\\
 \ell_4&=\frac1{C_1}-\frac1{B_1+C_1}
        =\frac{B_1}{C_1(B_1+C_1)}.
 \label{eq: 3.14}
\end{align}
The coefficients in \eqref{eq: 3.11}--\eqref{eq: 3.14} satisfy
\begin{equation}
 \sum_i\ell_i=0,
 \qquad
 \sum_i\ell_ix_i=0,
 \qquad
 \sum_i\ell_ix_i^2=0.
 \label{eq: 3.15}
\end{equation}
Thus $L_x$ annihilates every polynomial of degree at most two.

For a smooth $U$, Taylor's formula with integral remainder about $x_1$ is
\begin{equation}
 U(x)=U(x_1)+U'(x_1)(x-x_1)
 +\frac12U''(x_1)(x-x_1)^2
 +\int_{x_1}^x\frac{(x-s)^2}{2}U'''(s)\dd s.
 \label{eq: 3.16}
\end{equation}
Applying $L_x$ to \eqref{eq: 3.16} and using \eqref{eq: 3.15} eliminates the first three terms.  Interchanging the finite sum and the integral yields
\begin{equation}
 L_xU=\int_{x_1}^{x_4}P_x(s)U'''(s)\dd s,
 \qquad
 P_x(s):=\frac12\sum_{i:x_i\geq s}\ell_i(x_i-s)^2.
 \label{eq: 3.17}
\end{equation}
For $D_z^3U$ a finite Radon measure, smooth convolution and continuity of $P_x$ lead to the measure-level identity
\begin{equation}
 \frac{\dd}{\dd t}\log R
 =\int_{[x_1,x_4]}P_x(s)\,\dd(D_z^3U)(s).
 \label{eq: 3.18}
\end{equation}

The kernel is explicit on each of the three subintervals.  For $x_3\leq s\leq x_4$, \cref{eq: 3.17,eq: 3.14} yield
\begin{equation}
 P_x(s)=\frac{B_1(x_4-s)^2}{2C_1(B_1+C_1)}.
 \label{eq: 3.19}
\end{equation}
If $x_2\leq s\leq x_3$, the terms $i=3,4$ contribute:
\begin{equation}
 P_x(s)=\frac12\left[
 \left(-\frac1{C_1}-\frac1{A_1+B_1}\right)(x_3-s)^2
 +\left(\frac1{C_1}-\frac1{B_1+C_1}\right)(x_4-s)^2
 \right].
 \label{eq: 3.20}
\end{equation}
Finally, on $x_1\leq s\leq x_2$, the terms $i=2,3,4$ contribute.  Use \eqref{eq: 3.15} in the definition \eqref{eq: 3.17} to obtain
$$
\begin{aligned}
 P_x''(s)&=\ell_2+\ell_3+\ell_4=-\ell_1
 =\frac{B_1}{A_1(A_1+B_1)},\\
 P_x(x_1)&=\frac12\sum_i\ell_i(x_i-x_1)^2=0,\\
 P_x'(x_1)&=-\sum_i\ell_i(x_i-x_1)=0,
\end{aligned}
$$
where the last two equalities are the corresponding moment cancellations.  Two integrations therefore yield
\begin{equation}
 P_x(s)=\frac{B_1(s-x_1)^2}{2A_1(A_1+B_1)}
 \qquad(x_1\leq s\leq x_2).
 \label{eq: 3.21}
\end{equation}
It remains to determine the sign of this kernel.  The two outer pieces are nonnegative and positive in their interiors.  On the middle interval, differentiating \eqref{eq: 3.20} twice shows that
\begin{equation}
 P_x''(s)=-\frac1{A_1+B_1}-\frac1{B_1+C_1}<0.
 \label{eq: 3.22}
\end{equation}
At the endpoints, the kernel satisfies
\begin{equation}
 P_x(x_2)=\frac{A_1B_1}{2(A_1+B_1)}>0,
 \qquad
 P_x(x_3)=\frac{B_1C_1}{2(B_1+C_1)}>0.
 \label{eq: 3.23}
\end{equation}
A concave function lies above its chord, so it is positive throughout $[x_2,x_3]$.  We have proved that
\begin{equation}
 P_x(s)\geq0\quad(x_1\leq s\leq x_4),
 \qquad
 P_x(s)>0\quad(x_1<s<x_4).
 \label{eq: 3.24}
\end{equation}
The outer formulas \eqref{eq: 3.19} and \eqref{eq: 3.21} also show that $P_x=P_x'=0$ at $x_1,x_4$.  Hence the extension of $P_x$ by zero outside $[x_1,x_4]$ is continuously differentiable.

Fix a cyclically ordered material quadruple and choose its ordered lift
$$
 \theta_1<\theta_2<\theta_3<\theta_4<\theta_1+L.
$$
This ordered lift is unique up to adding the same integer multiple of $L$ to all four entries.  For $z_i=\tan(\theta_i-c)$ in any projective chart containing the lifted arc,
$$
 z_j-z_i
 =\frac{\sin(\theta_j-\theta_i)}
        {\cos(\theta_j-c)\cos(\theta_i-c)}.
$$
Substitution into \eqref{eq: 3.8} cancels every cosine factor and leaves the chart-independent formula
\begin{equation}
 R(\theta_1,\theta_2,\theta_3,\theta_4)
 =\frac{
 \sin(\theta_2-\theta_1)\sin(\theta_4-\theta_3)}
 {\sin(\theta_3-\theta_1)\sin(\theta_4-\theta_2)}.
 \label{eq: 3.25}
\end{equation}
This agrees with the cross-ratio in \eqref{eq: 3.8} in every admissible chart.  The expression is unchanged by a common lift translation since it contains only differences.  Independent changes $\theta_i\mapsto\theta_i+k_iL$ are not allowed and, since $L\neq\pi$, would generally change the sine factors. All differences in \eqref{eq: 3.25} belong to $(0,L)$, so the four sine factors are positive.  Thus the quantity is well defined once the common ordered lift has been fixed.

Suppose the four labels lie strictly inside a material arc on which $D_\theta g$ is a positive measure.  Combining \eqref{eq: 3.6}, \eqref{eq: 3.9}, \eqref{eq: 3.18}, and \eqref{eq: 3.24}, we obtain
\begin{equation}
 \frac{\dd}{\dd t}\log R\geq0.
 \label{eq: 3.26}
\end{equation}
On a material arc where $-D_\theta g$ is a positive measure,
\begin{equation}
 \frac{\dd}{\dd t}\log R\leq0,
 \label{eq: 3.27}
\end{equation}
and therefore
\begin{equation}
 \frac{\dd}{\dd t}\log(1-R)
 =-\frac{R}{1-R}
   \frac{\dd}{\dd t}\log R\geq0.
 \label{eq: 3.28}
\end{equation}
The inequality in \eqref{eq: 3.26} is strict if $D_\theta g$ assigns positive mass to the open material subarc between the first and fourth labels.  The inequality in \eqref{eq: 3.28} is strict if $-D_\theta g$ assigns positive mass to the same open subarc, since the weight in \eqref{eq: 3.6} is positive and $P_x>0$ in the open interval.

At a fixed time, the closed four-label arc has length strictly smaller than $L<\pi$, so a projective point at infinity can be chosen away from it. By continuity the same chart remains valid on a short time interval. Since \eqref{eq: 3.25} is independent of the chart, the local derivative formulas agree on overlaps and yield the global monotonicities \eqref{eq: 3.26} and \eqref{eq: 3.28}.

\subsection{Limits of transported measures and quantiles}

For the fixed decomposition \eqref{eq: 1.16}, retain the individual transported measures and atom trajectories:
\begin{equation}
 \nu_j^\pm(t)=\chi(t)_\#\mu_j^\pm,
 \qquad
 b_k(t)=\chi(t,b_k).
 \label{eq: 3.29}
\end{equation}
Convergence of the total functions alone does not show which part of the limiting signed measure comes from each initial sign component. The following compactness lemma applies to any finite family of transported positive measures and material labels.

\begin{lem}\label{lem: 3.1}
Let $t_n\to+\infty$ or $t_n\to-\infty$.  Fix finite positive measures $\mu_1,\ldots,\mu_N$ on $\mathbb T_L$ and a possibly empty finite list of material labels $\alpha_1,\ldots,\alpha_\ell$, and set $\nu_i(t):=\chi(t)_\#\mu_i$.  Then after passing to a common subsequence, there is a complete solution $\bar g$ of \eqref{eq: 1.4}, for which the following statements hold simultaneously:
\begin{enumerate}[label=(\roman*)]
\item $g(t_n+\cdot)\to\bar g$ in $C_{\mathrm{loc}}(\mathbb R;L^s)$ for every $1\leq s<\infty$;
\item for $1\leq i\leq N$,
\begin{equation}
 \nu_i(t_n+\cdot)\longrightarrow\bar\nu_i(\cdot)
 \quad\text{locally uniformly in }d_{\mathrm{BL}};
 \label{eq: 3.30}
\end{equation}
\item for $1\leq j\leq\ell$, the trajectories $\chi(t_n+\cdot,\alpha_j)$ converge locally uniformly as maps into $\mathbb T_L$.
\end{enumerate}
If $\bar\chi(\sigma;0,\cdot)$ is the flow of $\bar u=2A^{-1}\bar g$, then we have
\begin{equation}
 \bar\nu_i(\sigma)
 =\bar\chi(\sigma;0)_\#\bar\nu_i(0),
 \qquad 1\leq i\leq N.
 \label{eq: 3.31}
\end{equation}
\end{lem}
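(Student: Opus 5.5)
Since $\mu_i$ and the labels $\alpha_j$ are given directly, we do not need $g_0\in BV$ for (i). The plan is to pass to a first subsequence using \cref{lem: 2.3}. When $g_0\in BV$ it gives (i) directly, with convergence in $C_{\mathrm{loc}}(\mathbb R;W^{\alpha,r})$, and this contains $C_{\mathrm{loc}}(\mathbb R;L^s)$ by interpolation with the $L^\infty$ bound. Without a $BV$ assumption, (i) needs separate justification. The approach is to use only $\|g(t)\|_\infty=\|g_0\|_\infty$ and weak-* compactness on compact time windows. This gives a weak limit, and the velocities $u_n=2A^{-1}g_n$ then converge in $C_{\mathrm{loc}}(\mathbb R;C^0)$ with uniform $C^1$ bounds from \eqref{eq: 2.4}. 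In the present paper the lemma is applied to $BV$ data, so we work in that setting and take $\bar g$, $\bar u$ from \cref{lem: 2.3}, together with \eqref{eq: 2.36}:
$$
u_n\to\bar u\quad\text{in }C_{\mathrm{loc}}(\mathbb R;C^1(\mathbb T_L)).
$$

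Next we handle the relative flows $\chi_n(\sigma;0,\cdot):=\chi(t_n+\sigma;t_n,\cdot)$. These are generated by $u_n$ and are uniformly bi-Lipschitz on $|\sigma|\le T$ by \eqref{eq: 2.9}. Gronwall's inequality with the uniform Lipschitz bound $\Lambda$ gives
$$
\sup_{|\sigma|\le T}\sup_\alpha d_L\bigl(\chi_n(\sigma;0,\alpha),\bar\chi(\sigma;0,\alpha)\bigr)\le Te^{\Lambda T}\sup_{|\sigma|\le T}\|u_n-\bar u\|_\infty\to0 .
$$
For (iii), the points $\chi(t_n,\alpha_j)$ lie in the compact circle, so we extract a subsequence along which they converge to some $\beta_j$. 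Then $\chi(t_n+\sigma,\alpha_j)=\chi_n(\sigma;0,\chi(t_n,\alpha_j))$ converges locally uniformly to $\bar\chi(\sigma;0,\beta_j)$. This uses the equi-Lipschitz property in the initial point together with the uniform convergence of the flows.

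For (ii), the measures $\nu_i(t_n)$ are positive with fixed mass $\mu_i(\mathbb T_L)$. Prokhorov's theorem on the compact circle, together with the fact that $d_{\mathrm{BL}}$ metrizes weak-* convergence at fixed mass, gives a further subsequence with $\nu_i(t_n)\to\bar\nu_i(0)$ in $d_{\mathrm{BL}}$. We then set $\bar\nu_i(\sigma):=\bar\chi(\sigma;0)_\#\bar\nu_i(0)$, which is exactly \eqref{eq: 3.31}, and use $\nu_i(t_n+\sigma)=\chi_n(\sigma;0)_\#\nu_i(t_n)$. For a test function $\varphi$ with $\|\varphi\|_\infty+\Lip(\varphi)\le1$ we split
$$
\Bigl|\int\varphi\circ\chi_n\,\dd\nu_i(t_n)-\int\varphi\circ\bar\chi\,\dd\bar\nu_i(0)\Bigr|
\le m_i\sup|\chi_n-\bar\chi|+\Bigl|\int\varphi\circ\bar\chi\,\dd(\nu_i(t_n)-\bar\nu_i(0))\Bigr|.
$$
The function $\varphi\circ\bar\chi(\sigma;0)$ has $\|\cdot\|_\infty+\Lip\le e^{\Lambda T}$ for $|\sigma|\le T$, so the second term is at most $e^{\Lambda T}d_{\mathrm{BL}}(\nu_i(t_n),\bar\nu_i(0))$. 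Both terms therefore vanish uniformly for $|\sigma|\le T$. The list of measures and labels is finite, so all of these extractions combine into one common subsequence. A diagonal argument over $T\in\mathbb N$ then gives local uniformity.

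The main obstacle is the uniform-in-$\sigma$ convergence of the flows $\chi_n$. The difficulty is that the velocities converge only along the subsequence and the flows start at the moving time $t_n$. Once the relative flows are written as flows of $u_n$ on a fixed window and compared with $\bar u$ through Gronwall's inequality, the remaining steps are soft compactness arguments. The ordered-lift bookkeeping on the circle causes no trouble, because $d_L$ distances are controlled by the lifted differences.
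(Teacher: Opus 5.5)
Your proposal is correct and follows essentially the same route as the paper: take $\bar g,\bar u$ from \cref{lem: 2.3} with \eqref{eq: 2.36}, compare the relative flows $\chi(t_n+\sigma;t_n,\cdot)$ with $\bar\chi(\sigma;0,\cdot)$ by Gronwall, extract limits of $\nu_i(t_n)$ and $\chi(t_n,\alpha_j)$, define $\bar\nu_i(\sigma)$ by push-forward, and split the bounded-Lipschitz error exactly as the paper does. The paper also records $C^1$ convergence of the flows in \eqref{eq: 3.33}, which the lemma as stated does not need; your non-$BV$ aside can be dropped, because weak-* compactness alone would not give the strong $L^s$ convergence in (i), and the paper likewise works with $BV$ data through \cref{lem: 2.3}.
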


\begin{proof}
Apply \cref{lem: 2.3} to the total functions.  By \eqref{eq: 2.36},
$$
 u_n(\sigma,\theta):=u(t_n+\sigma,\theta)
 \longrightarrow\bar u(\sigma,\theta)
 \quad\text{in }C_{\mathrm{loc}}(\mathbb R;C^1).
$$
Let
$$
 \chi_n(\sigma,x):=\chi(t_n+\sigma;t_n,x)
$$
be the relative flow.  Subtracting the ODEs for $\chi_n$ and $\bar\chi$, using the bound $\|(u_n)_\theta\|_\infty\leq\Lambda$, and applying Gronwall's inequality yields the explicit estimate
\begin{equation}
 \sup_{|\sigma|\leq T}\|\chi_n(\sigma)-\bar\chi(\sigma;0)\|_\infty
 \leq e^{\Lambda T}\int_{-T}^{T}
       \|u_n(\rho)-\bar u(\rho)\|_\infty\dd\rho
 \longrightarrow0.
 \label{eq: 3.32}
\end{equation}
The spatial derivative of $\chi_n$ solves
$$
 \partial_\sigma D_x\chi_n
 =(u_n)_\theta(\sigma,\chi_n)D_x\chi_n,
 \qquad D_x\chi_n(0)=1.
$$
The derivative $D_x\bar\chi$ satisfies the analogous equation with $\bar u$. By \eqref{eq: 3.32} and the uniform continuity of $\bar u_\theta$ on $[-T,T]\times\mathbb T_L$,
$$
\begin{aligned}
 &\sup_{|\sigma|\leq T}
 \|(u_n)_\theta(\sigma,\chi_n(\sigma))
   -\bar u_\theta(\sigma,\bar\chi(\sigma;0))\|_\infty\\
 &\quad\leq
 \sup_{|\sigma|\leq T}\|(u_n)_\theta(\sigma)-\bar u_\theta(\sigma)\|_\infty
 +\sup_{|\sigma|\leq T}
  \|\bar u_\theta(\sigma,\chi_n(\sigma))
    -\bar u_\theta(\sigma,\bar\chi(\sigma;0))\|_\infty
 \longrightarrow0.
\end{aligned}
$$
A second Gronwall estimate therefore yields
\begin{equation}
 \sup_{|\sigma|\leq T}\|\chi_n(\sigma,\cdot)
       -\bar\chi(\sigma;0,\cdot)\|_{C^1}\longrightarrow0.
 \label{eq: 3.33}
\end{equation}

Since the family is finite and all masses are fixed, after a further extraction we have
\begin{equation}
 \nu_i(t_n)\stackrel{*}{\rightharpoonup}\bar\nu_i(0),
 \qquad 1\leq i\leq N.
 \label{eq: 3.34}
\end{equation}
After a further extraction, there are points $\bar\alpha_1,\ldots,\bar\alpha_\ell\in\mathbb T_L$ such that
$$
 \chi(t_n,\alpha_j)\longrightarrow\bar\alpha_j,
 \qquad 1\leq j\leq\ell.
$$
The limiting label trajectories are $\bar\chi(\sigma;0,\bar\alpha_j)$.  We also set
$$
 \bar\nu_i(\sigma)
 :=\bar\chi(\sigma;0)_\#\bar\nu_i(0),
 \qquad 1\leq i\leq N.
$$

The exact transport formula is
\begin{equation}
 \nu_i(t_n+\sigma)
 =\chi_n(\sigma)_\#\nu_i(t_n),
 \qquad 1\leq i\leq N.
 \label{eq: 3.35}
\end{equation}
To see that convergence is uniform in $\sigma$, let $\|\varphi\|_\infty+\Lip\varphi\leq1$.  Add and subtract the push-forward by $\bar\chi$:
$$
\begin{aligned}
 &\left|\int\varphi\circ\chi_n(\sigma)\dd\nu_i(t_n)
       -\int\varphi\circ\bar\chi(\sigma;0)\dd\bar\nu_i(0)\right|\\
 &\quad\leq \mu_i(\mathbb T_L)
 \|\chi_n(\sigma)-\bar\chi(\sigma;0)\|_\infty\\
 &\qquad
 +\left|\int\varphi\circ\bar\chi(\sigma;0)
              \dd[\nu_i(t_n)-\bar\nu_i(0)]\right|.
\end{aligned}
$$
On $[-T,T]$, we obtain
$$
 \|\varphi\circ\bar\chi(\sigma;0)\|_\infty
 +\Lip(\varphi\circ\bar\chi(\sigma;0))
 \leq 1+e^{\Lambda T}.
$$
Taking the supremum over admissible test functions proves \eqref{eq: 3.30} uniformly on $[-T,T]$. Equation \eqref{eq: 3.31} holds by the definition of $\bar\nu_i$, and the assertion for the labels follows from \eqref{eq: 3.32}.
\end{proof}

A transported lifted arc may fill, or asymptotically fill, the circle.  An atom at the cut may then appear at both endpoints of the lift.

Recall the quotient map $\quot:\mathbb R\to\mathbb T_L$.  Let $\mu$ be a non-atomic positive measure of mass $m>0$, concentrated on $I=\quot((a,b))$, where
\begin{equation}
 a<b\leq a+L.
 \label{eq: 3.36}
\end{equation}
Let $\widehat\mu$ be the unique positive measure concentrated on $(a,b)$ such that $(\quot)_\#\widehat\mu=\mu$, and define
\begin{equation}
 \alpha(\eta):=
 \inf\{x\in[a,b]:\widehat\mu([a,x])\geq\eta\},
 \qquad 0<\eta<m.
 \label{eq: 3.37}
\end{equation}

Let $(t_n)_{n\geq1}\subset\mathbb R$ be a sequence and suppose that
$$
 \nu_n:=\chi(t_n)_\#\mu\stackrel{*}{\rightharpoonup}\nu
 \quad\text{on }\mathbb T_L.
$$
Choose $k_n\in\mathbb Z$ so that
\begin{equation}
 a_n:=\widehat\chi(t_n,a)-k_nL\in[0,L),
 \qquad
 b_n:=\widehat\chi(t_n,b)-k_nL,
 \label{eq: 3.38}
\end{equation}
and set
\begin{equation}
 \widehat\nu_n:=
 (\widehat\chi(t_n,\cdot)-k_nL)_\#\widehat\mu.
 \label{eq: 3.39}
\end{equation}
For a positive measure $\rho$ of mass $m$ on $\mathbb R$, we write
\begin{equation}
 \begin{aligned}
  F_\rho(x)&:=\rho((-\infty,x]),
  &F_\rho(x^-)&:=\rho((-\infty,x)),\\
  Q_\rho(\eta)&:=\inf\{x:F_\rho(x)\geq\eta\},
  &0&<\eta<m.
 \end{aligned}
 \label{eq: 3.40}
\end{equation}

\begin{lem}\label{lem: 3.2}
After passing to a subsequence, the following conclusions hold.
\begin{enumerate}[label=(\roman*)]
\item There exist $a_\infty\leq b_\infty\leq a_\infty+L$ and a positive measure $\widehat\nu$ such that
\begin{equation}
 a_n\to a_\infty,\qquad
 b_n\to b_\infty,\qquad
 \widehat\nu_n\stackrel{*}{\rightharpoonup}\widehat\nu
 \quad\text{on }[0,2L],
 \label{eq: 3.41}
\end{equation}
and
\begin{equation}
 \widehat\nu(\mathbb R)=m,\qquad
 \supp\widehat\nu\subset[a_\infty,b_\infty],\qquad
 (\quot)_\#\widehat\nu=\nu.
 \label{eq: 3.42}
\end{equation}

\item For every $0<\eta<m$, we have
\begin{equation}
 \widehat\chi(t_n,\alpha(\eta))-k_nL
 =Q_{\widehat\nu_n}(\eta),
 \label{eq: 3.43}
\end{equation}
and, whenever $Q_{\widehat\nu}$ is continuous at $\eta$,
\begin{equation}
 Q_{\widehat\nu_n}(\eta)\longrightarrow Q_{\widehat\nu}(\eta).
 \label{eq: 3.44}
\end{equation}

\item Any $\ell$ distinct points of $\supp\nu$ admit increasing lifts
\begin{equation}
 \xi_1<\cdots<\xi_\ell\quad\text{in }\supp\widehat\nu,
 \qquad
 \quot(\xi_i)\neq\quot(\xi_j)\quad(i\neq j),
 \label{eq: 3.45}
\end{equation}
and ordered, pairwise disjoint neighborhoods $U_i=(r_i,s_i)\cap[a_\infty,b_\infty]$ satisfying
$$
 \xi_i\in U_i,\qquad
 \widehat\nu(U_i)>0,\qquad
 \widehat\nu(\{r_i,s_i\})=0,\qquad
 \sup U_\ell-\inf U_1<L.
$$
For such neighborhoods, there are continuity points $ 0<\eta_1<\cdots<\eta_\ell<m$ of $Q_{\widehat\nu}$ such that $Q_{\widehat\nu}(\eta_i)\in U_i$.  Moreover, if $\widehat\nu(\{\xi\})>0$, then any prescribed finite number of distinct continuity points may be chosen in $ \bigl(F_{\widehat\nu}(\xi^-),F_{\widehat\nu}(\xi)\bigr), $ and $Q_{\widehat\nu}$ equals $\xi$ at all of them.
\end{enumerate}
\end{lem}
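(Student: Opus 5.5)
The plan is to work entirely on the lifted line, where all three statements reduce to elementary facts about distribution functions and quantiles of positive measures with compact support.

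\emph{Part (i).} First I would note that $\widehat\chi(t_n,\cdot)$ is increasing with $\widehat\chi(t_n,a+L)=\widehat\chi(t_n,a)+L$. Since $b\le a+L$, this gives $a_n\le b_n\le a_n+L$. Together with $a_n\in[0,L)$, every $\widehat\nu_n$ is then supported in $[a_n,b_n]\subset[0,2L]$ and has mass $m$. By compactness of $[0,2L]$ and Banach--Alaoglu, a subsequence satisfies \eqref{eq: 3.41}. Supports pass to the closed limit interval, so $\supp\widehat\nu\subset[a_\infty,b_\infty]$, with $a_\infty\le b_\infty\le a_\infty+L$ and total mass $m$. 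Pushing forward by the continuous map $\quot$ commutes with weak-* convergence on compact sets. Since $(\quot)_\#\widehat\nu_n=\chi(t_n)_\#\mu=\nu_n\rightharpoonup\nu$, this gives $(\quot)_\#\widehat\nu=\nu$, using \eqref{eq: 1.7} and uniqueness of the lift.

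\emph{Part (ii).} Write $T_n:=\widehat\chi(t_n,\cdot)-k_nL$; it is strictly increasing and continuous. Then $F_{\widehat\nu_n}(T_n(x))=\widehat\mu((-\infty,x])$, because $\widehat\nu_n=(T_n)_\#\widehat\mu$. Hence
\[
Q_{\widehat\nu_n}(\eta)=T_n\bigl(Q_{\widehat\mu}(\eta)\bigr),
\]
and $Q_{\widehat\mu}(\eta)=\alpha(\eta)$ by \eqref{eq: 3.37}. This is \eqref{eq: 3.43}. The convergence \eqref{eq: 3.44} is the standard fact that weak convergence of measures of equal mass on $\mathbb R$ implies convergence of quantile functions at continuity points of the limit quantile. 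I would prove it directly. Let $x=Q_{\widehat\nu}(\eta)$ and fix $\varepsilon>0$, choosing $x\pm\varepsilon$ to be non-atoms of $\widehat\nu$. Continuity of $Q_{\widehat\nu}$ at $\eta$ forces
\[
F_{\widehat\nu}(x-\varepsilon)<\eta<F_{\widehat\nu}(x+\varepsilon).
\]
If instead $F_{\widehat\nu}(x+\varepsilon)=\eta$, then $Q$ would jump at $\eta$. These strict inequalities pass to $F_{\widehat\nu_n}$ for large $n$, which traps $Q_{\widehat\nu_n}(\eta)$ in $(x-\varepsilon,x+\varepsilon]$.

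\emph{Part (iii).} Take distinct points $p_1,\dots,p_\ell\in\supp\nu$. Since $\supp\widehat\nu\subset[a_\infty,b_\infty]$ with length at most $L$, each $p_i$ has a lift in $\supp\widehat\nu$. Only the endpoints can project to the same point, which happens only when $b_\infty=a_\infty+L$. In that case I choose one lift per point, which makes the lifts distinct with distinct projections. I then order them as $\xi_1<\dots<\xi_\ell$.

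\emph{Main obstacle.} The delicate requirement is $\sup U_\ell-\inf U_1<L$. It holds if we take $\xi_\ell-\xi_1<L$ and small radii. This fails only if $\xi_1=a_\infty$ and $\xi_\ell=a_\infty+L$, but these project to the same point and so were already excluded.

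To build the neighborhoods, choose radii smaller than half the minimal gap, with endpoints $r_i,s_i$ outside the countable atom set of $\widehat\nu$. Since $\xi_i\in\supp\widehat\nu$, each $U_i$ has positive mass. For the quantiles, $\widehat\nu(U_i)>0$ with null boundary means the interval $\bigl(F_{\widehat\nu}(r_i),F_{\widehat\nu}(s_i^-)\bigr)$ is nondegenerate. Every $\eta$ in it has $Q_{\widehat\nu}(\eta)\in[r_i,s_i]$, and because the boundary is null the value actually lies in $U_i$. The discontinuities of the monotone function $Q_{\widehat\nu}$ are countable, so we can pick a continuity point $\eta_i$ in each of these disjoint ordered intervals; this gives $\eta_1<\dots<\eta_\ell$. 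If $\widehat\nu(\{\xi\})>0$, then $Q_{\widehat\nu}\equiv\xi$ on $\bigl(F_{\widehat\nu}(\xi^-),F_{\widehat\nu}(\xi)\bigr)$. This is an open interval on which $Q$ is constant, so every point of it is a continuity point, and any finite number can be chosen there.
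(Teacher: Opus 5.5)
Your proposal is correct and follows the paper's proof essentially step by step: you use compactness and Portmanteau for (i); for (ii), the conjugation $F_{\widehat\nu_n}\circ T_n=F_{\widehat\mu}$ and trapping $Q_{\widehat\nu_n}(\eta)$ between two non-atoms; and for (iii), one representative per circular point, null-endpoint neighborhoods, and continuity levels chosen in the mass intervals $\bigl(F_{\widehat\nu}(r_i),F_{\widehat\nu}(s_i^-)\bigr)$. Two justifications are misattributed. Lifts of points of $\supp\nu$ exist in $\supp\widehat\nu$ because $(\quot)_\#\widehat\nu=\nu$ and $\supp\widehat\nu$ is compact, not because of the length bound. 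Also, $Q_{\widehat\nu}(\eta)\in U_i$ follows from the strict inequalities $F_{\widehat\nu}(r_i)<\eta<F_{\widehat\nu}(s_i^-)$ together with $Q_{\widehat\nu}(\eta)\in[a_\infty,b_\infty]$, so the null boundary is not needed for that step.
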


\begin{proof}
Since $\widehat\chi(t_n,\cdot)$ is strictly increasing and degree one, \eqref{eq: 3.36} implies that
\begin{equation}
 a_n<b_n\leq a_n+L.
 \label{eq: 3.46}
\end{equation}
Thus $a_n\in[0,L)$, $b_n\in(0,2L)$, and $\supp\widehat\nu_n\subset [a_n,b_n]\subset[0,2L]$.  By compactness, we may pass to a subsequence satisfying \eqref{eq: 3.41}.  Testing with the constant function $1$ preserves the total mass.  To locate the support, fix $x\notin[a_\infty,b_\infty]$ and choose an open neighborhood $V_x$ whose closure has positive distance from $[a_\infty,b_\infty]$.  Then $V_x\cap[a_n,b_n]=\varnothing$ for all large $n$, so $\widehat\nu(V_x)=0$ by the Portmanteau theorem.  Hence no such $x$ belongs to the support and $\supp\widehat\nu\subset[a_\infty,b_\infty]$.  Finally, we have
$$
 (\quot)_\#\widehat\nu_n=\nu_n,
$$
and continuity of $\quot$ implies $(\quot)_\#\widehat\nu=\nu$.

Write $F_\mu=F_{\widehat\mu}$, $F_n=F_{\widehat\nu_n}$, $Q_n=Q_{\widehat\nu_n}$, and $Q=Q_{\widehat\nu}$. The function $F_\mu$ is continuous since $\widehat\mu$ is non-atomic.  If $h_n(x)=\widehat\chi(t_n,x)-k_nL$, then $h_n$ is strictly increasing and
$$
 F_n(h_n(x))=F_\mu(x).
$$
Since $h_n$ is a strictly increasing homeomorphism,
$$
 \{y:F_n(y)\geq\eta\}
 =h_n\bigl(\{x:F_\mu(x)\geq\eta\}\bigr).
$$
Continuity and monotonicity imply $\inf h_n(E)=h_n(\inf E)$ for every nonempty set $E$ bounded from below.  Therefore
$$
 Q_{(h_n)_\#\widehat\mu}(\eta)
 =h_n\bigl(Q_{\widehat\mu}(\eta)\bigr),
$$
which proves \eqref{eq: 3.43}, including when $F_\mu$ is constant on an interval.  Nonatomicity also shows that $\eta_1<\eta_2$ implies $\alpha(\eta_1)<\alpha(\eta_2)$: the equality would force a jump of $F_\mu$ at the common quantile.

Next, we prove \eqref{eq: 3.44}.  Fix a point $\eta$ at which $Q$ is continuous and let $\varepsilon>0$.  Choose
$$
 Q(\eta)-\varepsilon<x_-<Q(\eta)<x_+<Q(\eta)+\varepsilon,
$$
so that $\widehat\nu(\{x_-\})=\widehat\nu(\{x_+\})=0$ and
\begin{equation}
 \widehat\nu(( -\infty,x_-])<\eta
 <\widehat\nu(( -\infty,x_+]).
 \label{eq: 3.47}
\end{equation}
Such points can be chosen since $Q$ is continuous at $\eta$.  Weak-* convergence at the two continuity points preserves the strict inequalities in \eqref{eq: 3.47} for all large $n$.  The definition of generalized inverse then yields
$$
 x_-<Q_n(\eta)\leq x_+.
$$
Letting $\varepsilon\to0$ proves \eqref{eq: 3.44}.

Note that every point of $\supp\nu$ has a representative in $\supp\widehat\nu$ since $(\quot)_\#\widehat\nu=\nu$.  If $b_\infty-a_\infty=L$, only $a_\infty$ and $b_\infty$ can project to the same circular point. When this circular point is among the prescribed points, choose only one of its available endpoint representatives.  After relabeling, the chosen representatives satisfy \eqref{eq: 3.45}, and
$$
 \xi_\ell-\xi_1<L.
$$
Since only finitely many representatives are involved, we may choose ordered, pairwise disjoint neighborhoods
$$
 U_i=(r_i,s_i)\cap[a_\infty,b_\infty]
$$
sufficiently small that
$$
 \sup U_\ell-\inf U_1<L.
$$
Their endpoints may be chosen to be nonatoms of $\widehat\nu$, while $\widehat\nu(U_i)>0$ follows from $\xi_i\in\supp\widehat\nu$.  For each such $U_i$, the mass interval
$$
 \bigl(F_{\widehat\nu}(r_i),F_{\widehat\nu}(s_i^-)\bigr)
$$
is nonempty since its length equals $\widehat\nu(U_i)>0$.  These intervals are ordered and disjoint.  Since a monotone function has at most countably many discontinuities, choose a continuity point $\eta_i$ of $Q$ in each interval.  The definition of the generalized inverse yields $Q(\eta_i)\in U_i$.  If $\xi$ is an atom, then $Q$ is identically equal to $\xi$ on
$$
 \bigl(F_{\widehat\nu}(\xi^-),F_{\widehat\nu}(\xi)\bigr).
$$
Hence any prescribed finite number of distinct continuity points of $Q$ may be chosen in this open interval.
\end{proof}

\subsection{Atomic limits and profiles with finitely many jumps}

\begin{lem}\label{lem: 3.3}
Let $t_n\to+\infty$.  Along a common subsequence furnished by \cref{lem: 3.1}, write
$$
 \nu_j^\pm(t_n+\cdot)\longrightarrow\bar\nu_j^\pm(\cdot)
 \quad\text{locally uniformly in }d_{\mathrm{BL}}
$$
for all components in $\{\mu_j^+\}_{j=1}^{M_+}\cup\{\mu_j^-\}_{j=1}^{M_-}$. Then each $\bar\nu_j^\pm(0)$ is supported on at most three circular points.
\end{lem}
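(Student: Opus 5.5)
The plan is to argue by contradiction, using the monotonicity \eqref{eq: 3.26}, \eqref{eq: 3.28} of the cross-ratio on a one-sign component and its strict version. The same argument treats both signs.

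Fix one component $\mu=\mu_j^\pm$ of mass $m$ and suppose $\bar\nu:=\bar\nu_j^\pm(0)$ has at least four distinct circular support points. Apply \cref{lem: 3.2} to $\nu_n=\chi(t_n)_\#\mu$, after a further extraction. Part (iii) gives increasing lifts $\xi_1<\cdots<\xi_4$ in $\supp\widehat\nu$ with total spread $<L$, disjoint neighborhoods $U_i$ of positive $\widehat\nu$-mass, and continuity points $\eta_1<\cdots<\eta_4$ of $Q_{\widehat\nu}$ with $Q_{\widehat\nu}(\eta_i)\in U_i$. Let $\alpha_i:=\alpha(\eta_i)$ be the material labels from \eqref{eq: 3.37}. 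They are strictly increasing by nonatomicity, and they lie in the open arc $I$ because $0<\eta_i<m$. By \eqref{eq: 3.43}--\eqref{eq: 3.44}, the lifted positions $\widehat\chi(t_n,\alpha_i)-k_nL$ converge to $Q_{\widehat\nu}(\eta_i)$. These four limits are distinct, ordered, and span an arc of length $<L<\pi$.

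Let $R(t)$ be the cross-ratio \eqref{eq: 3.25} of the quadruple $\chi(t,\alpha_1),\dots,\chi(t,\alpha_4)$. Between $\alpha_1$ and $\alpha_4$ the measure $Dg_0$ equals $\pm\mu$, so by \eqref{eq: 1.8} the measure $\pm Dg(t)$ is positive on the transported subarc. Hence $\log R$ is nondecreasing for a positive component, and $\log(1-R)$ is nondecreasing for a negative one. In both cases the monotone quantity is bounded above by $0$. Consequently its increments over disjoint time windows $[t_n,t_n+\delta]$ are summable, and therefore tend to $0$.

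The contradiction will come from a uniform lower bound on these increments. By \cref{lem: 3.1}, on $[0,\delta]$ the label trajectories converge uniformly, the flows converge in $C^1$, and $\nu(t_n+\sigma)\to\bar\nu(\sigma)=\bar\chi(\sigma;0)_\#\bar\nu(0)$ in $d_{\mathrm{BL}}$. For small $\delta$ the limiting quadruple stays distinct within a fixed projective chart, and $\bar\nu(\sigma)$ keeps mass $\ge\widehat\nu(U_2)/2>0$ in a compact subinterval strictly between the second and third limiting points. I will work in that chart and use \eqref{eq: 3.6} and \eqref{eq: 3.18}. On $[x_1,x_4]$, the measure $D_z^3U$ equals the push-forward of $\pm2w\,\nu(t)$. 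The kernel $P_x$ depends continuously on the four positions and is bounded below by some $c>0$ on that compact subinterval, because of \eqref{eq: 3.24}; the weight $w$ is likewise bounded below there. The Portmanteau theorem, applied with a slightly enlarged open interval, then gives $|\frac{\dd}{\dd t}\log R|\ge c'>0$ on $[t_n,t_n+\delta]$ for all large $n$. For a negative component I will multiply by $R/(1-R)$, which is bounded below since $R$ stays in a compact subset of $(0,1)$ near the limit quadruple. Either way the increments over the windows are at least $c'\delta$, contradicting their summability.

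I expect the main obstacle to be this uniform-in-$n$ strict lower bound. It requires controlling simultaneously the chart, the continuity of $P_x$ in the positions, and the lower semicontinuity of mass under $d_{\mathrm{BL}}$ convergence on the window. A further point needing care is the case where the lifted arc asymptotically fills the circle, where the lifts in \cref{lem: 3.2} must be chosen with spread $<L$.
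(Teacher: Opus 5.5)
Your proposal follows the paper's proof essentially step for step. Four support points give four separated material labels via \cref{lem: 3.2}(iii). The monotone quantity ($\log R$, or $\log(1-R)$ for a negative component) is bounded above, so its increments over windows at $t_n$ tend to zero. A uniform positive lower bound on its derivative near $t_n$ then yields the contradiction. This bound comes from the positive Peano kernel together with positive limiting mass, and for negative components the factor $R/(1-R)$ is bounded below. The paper obtains the lower bound through the Lipschitz kernels $\Psi_{n,\sigma}$, the stability estimate \eqref{eq: 3.54}, and continuity of $D_\infty$, rather than through a Portmanteau argument, but this difference is cosmetic.

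One statement must be corrected. You claim that $\bar\nu(\sigma)$ keeps mass in a compact subinterval strictly between the second and third limiting points, and this is false in general. Suppose $\bar\nu(0)$ consists of four atoms and the levels $\eta_i$ are taken inside the atoms' mass intervals. Then $Q_{\widehat\nu}(\eta_i)=\xi_i$, and the open interval between the second and third limiting points carries no mass at all. Use instead the mass of $\overline{U_2}$, which is at least $\widehat\nu(U_2)>0$. Since the $U_i$ are ordered and disjoint, $\overline{U_2}$ is a compact set lying strictly between the first and fourth limiting points. There $P_x>0$ by \eqref{eq: 3.24}, uniformly for nearby separated quadruples, which is exactly what the paper uses.
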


\begin{proof}
For the positive case, let $j$ be any positive-component index and apply \cref{lem: 3.2} with
$$
 \mu=\mu_j^+,\qquad m=m_j^+,\qquad
 \nu_n=\nu_j^+(t_n),\qquad \nu=\bar\nu_j^+(0).
$$
The notation $\alpha$, $k_n$, $\widehat\nu_n$, and $\widehat\nu$ below refers to this application until the negative component is considered. Consider first $\bar\nu_j^+(0)$.  If its support contained four distinct circular points, \cref{lem: 3.2} would provide ordered representatives and disjoint lifted neighborhoods
\begin{equation}
 \xi_1<\xi_2<\xi_3<\xi_4,
 \qquad U_1<U_2<U_3<U_4,
 \qquad \sup U_4-\inf U_1<L,
 \label{eq: 3.48}
\end{equation}
where $\xi_i\in U_i$ and $\widehat\nu(U_i)>0$.  Choose continuity points $0<\eta_1<\cdots<\eta_4<m_j^+$ of $Q_{\widehat\nu}$ as in \cref{lem: 3.2}, set $\alpha_i=\alpha(\eta_i)$, and define $\vartheta_i:=Q_{\widehat\nu}(\eta_i)\in U_i$.  Then
\begin{equation}
 \widehat\chi(t_n,\alpha_i)-k_nL\longrightarrow\vartheta_i
 \qquad(i=1,\ldots,4).
 \label{eq: 3.49}
\end{equation}

For these four fixed material labels, define
$$
 R_+(t):=R
 \bigl(\chi(t,\alpha_1),\ldots,\chi(t,\alpha_4)\bigr).
$$
By \eqref{eq: 3.26}, $\log R_+$ is nondecreasing and bounded above by $0$, so it has a finite limit as $t\to+\infty$. Consequently, for every $T>0$,
\begin{equation}
 \log R_+(t_n+T)-\log R_+(t_n-T)\longrightarrow0.
 \label{eq: 3.50}
\end{equation}

To pass to the limit in the cross-ratio derivative, use the same lift normalization for all four labels on a fixed interval centered at $t_n$:
\begin{equation}
 \widehat\theta_{i,n}(\sigma)
 :=\widehat\chi(t_n+\sigma,\alpha_i)-k_nL.
 \label{eq: 3.51}
\end{equation}
The relative-flow estimate \eqref{eq: 3.32}, applied to the lifted ODE, and \eqref{eq: 3.49} imply
\begin{equation}
 \widehat\theta_{i,n}\longrightarrow\widehat\theta_{i,\infty}
 \quad\text{uniformly on compact $\sigma$-intervals}.
 \label{eq: 3.52}
\end{equation}
By \eqref{eq: 3.48}, choose an open interval $\widehat I\subset\mathbb R$ of length less than $L$ that contains $[\vartheta_1,\vartheta_4]$ with positive distance from its boundary.  We choose $c$ such that $\theta\mapsto\tan(\theta-c)$ has no pole on $\overline{\widehat I}$. By \eqref{eq: 3.52}, there are constants $\delta,d,M>0$ such that, for $|\sigma|\leq\delta$ and all large $n$, all four normalized labels and their limits remain in a fixed compact subinterval of $\widehat I$ and
\begin{equation}
 x_{i,n}(\sigma):=\tan(\widehat\theta_{i,n}(\sigma)-c)\in[-M,M],
 \qquad x_{i+1,n}(\sigma)-x_{i,n}(\sigma)\geq d
 \quad(i=1,2,3).
 \label{eq: 3.53}
\end{equation}
Write
$$
 x_n(\sigma):=(x_{1,n}(\sigma),\ldots,x_{4,n}(\sigma)),
 \qquad
 x_\infty(\sigma):=(x_{1,\infty}(\sigma),\ldots,x_{4,\infty}(\sigma)),
$$
where $x_{i,\infty}(\sigma)=\tan(\widehat\theta_{i,\infty}(\sigma)-c)$.

For an ordered quadruple $x$, extend $P_x$ from $[x_1,x_4]$ by zero and denote the extension by $\widetilde P_x$.  With $a_+:=\max\{a,0\}$, the moment cancellations \eqref{eq: 3.15} extend \eqref{eq: 3.17} to the global formulas
$$
 \widetilde P_x(s)=\frac12\sum_{i=1}^4\ell_i(x)(x_i-s)_+^2,
 \qquad
 \widetilde P_x'(s)=-\sum_{i=1}^4\ell_i(x)(x_i-s)_+.
$$
For $s<x_1$, the two sums vanish by the three moment cancellations, and for $s>x_4$, every positive part vanishes.  On the parameter set specified by \eqref{eq: 3.53}, the coefficients $\ell_i(x)$ are uniformly bounded and Lipschitz functions of $x$, since every denominator in \eqref{eq: 3.11}--\eqref{eq: 3.14} is bounded below in terms of $d$.  If $|a|,|b|,|s|\leq M$, then we obtain
$$
 |(a-s)_+-(b-s)_+|\leq|a-b|,
 \qquad
 |(a-s)_+^2-(b-s)_+^2|\leq4M|a-b|.
$$
Both zero extensions vanish, together with their first derivatives, outside $[-M,M]$.  For ordered quadruples $x$ and $y$, write $|x-y|:=\max_i|x_i-y_i|$.  Applying the last two inequalities term by term, we obtain
\begin{equation}
 \|\widetilde P_x-\widetilde P_y\|_{W^{1,\infty}(\mathbb R)}
 \leq C_{d,M}|x-y|,
 \label{eq: 3.54}
\end{equation}

Since $\widehat I$ has length less than $L$, the quotient map $\quot$ is injective there.  We may therefore define a function on $\mathbb T_L$ by
\begin{equation}
 \Psi_{n,\sigma}(\quot(\widehat\theta))
 :=2\cos^2(\widehat\theta-c)
   \widetilde P_{x_n(\sigma)}(\tan(\widehat\theta-c)),
 \qquad \widehat\theta\in\widehat I,
 \label{eq: 3.55}
\end{equation}
and set $\Psi_{n,\sigma}=0$ outside $\quot(\widehat I)$. The definition is unambiguous, and the two pieces agree in a neighborhood of the boundary since $\widetilde P_{x_n(\sigma)}$ is supported on the four-label interval.  The vanishing of $\widetilde P_{x_n(\sigma)}$ and its first derivative at the two outer labels makes $\Psi_{n,\sigma}$ Lipschitz.  Define $\Psi_{\infty,\sigma}$ in the same way with $x_\infty(\sigma)$.  Combining \eqref{eq: 3.52} and \eqref{eq: 3.54} yields
\begin{equation}
 \sup_{|\sigma|\leq\delta}
 \left(\|\Psi_{n,\sigma}-\Psi_{\infty,\sigma}\|_\infty
 +\Lip(\Psi_{n,\sigma}-\Psi_{\infty,\sigma})\right)
 \longrightarrow0.
 \label{eq: 3.56}
\end{equation}

The support of $\Psi_{n,\sigma}$ lies between the first and fourth labels. This closed subarc is contained in the transported arc on which $Dg$ is positive, and the restrictions of $Dg(t_n+\sigma)$ and $\nu_j^+(t_n+\sigma)$ to this subarc agree.  Define
\begin{equation}
 D_n(\sigma)
 :=\int_{\mathbb T_L}\Psi_{n,\sigma}\dd\nu_j^+(t_n+\sigma).
 \label{eq: 3.57}
\end{equation}
Equations \eqref{eq: 3.6} and \eqref{eq: 3.18} then show that $\frac{\dd}{\dd\sigma}\log R_+(t_n+\sigma)=D_n(\sigma)$ for almost every $\sigma$. Set
$$
 D_\infty(\sigma)
 :=\int_{\mathbb T_L}\Psi_{\infty,\sigma}\dd\bar\nu_j^+(\sigma).
$$
By \eqref{eq: 3.30} and \eqref{eq: 3.56},
\begin{align}
 &\sup_{|\sigma|\leq\delta}
 |D_n(\sigma)-D_\infty(\sigma)|
 \notag\\
 &\quad\leq m_j^+
  \sup_{|\sigma|\leq\delta}
  \|\Psi_{n,\sigma}-\Psi_{\infty,\sigma}\|_\infty
 +C\sup_{|\sigma|\leq\delta}
  d_{\mathrm{BL}}\bigl(\nu_j^+(t_n+\sigma),\bar\nu_j^+(\sigma)\bigr)
 \longrightarrow0.
 \label{eq: 3.58}
\end{align}
At $\sigma=0$, $\Psi_{\infty,0}$ is strictly positive on $\quot(U_2)$, and $\bar\nu_j^+(0)(\quot(U_2))\geq\widehat\nu(U_2)>0$. Therefore
\begin{equation}
 D_\infty(0)>0.
 \label{eq: 3.59}
\end{equation}
The transport formula \eqref{eq: 3.31} and the continuous dependence of $\Psi_{\infty,\sigma}$ show that $D_\infty$ is continuous.  Hence \eqref{eq: 3.58} and \eqref{eq: 3.59} imply that there exist $\kappa>0$ and $0<\delta_0\leq\delta$ such that, for all large $n$,
$$
 \log R_+(t_n+\delta_0)-\log R_+(t_n-\delta_0)
 =\int_{-\delta_0}^{\delta_0}D_n(\sigma)\dd\sigma
 \geq2\kappa\delta_0,
$$
contradicting \eqref{eq: 3.50}.  Therefore $\#\supp\bar\nu_j^+(0)\leq3$.

For the negative case, let $j$ be any negative-component index and suppose that $\bar\nu_j^-(0)$ has at least four support points.  Apply \cref{lem: 3.2} with
$$
 \mu=\mu_j^-,\qquad m=m_j^-,\qquad
 \nu_n=\nu_j^-(t_n),\qquad \nu=\bar\nu_j^-(0).
$$
From this point to the end of the proof, the notation $\alpha$, $k_n$, $\widehat\nu_n$, and $\widehat\nu$ refers to this negative component. Choose ordered representatives and continuity levels $0<\eta_1<\cdots<\eta_4<m_j^-$ as in \cref{lem: 3.2}, and set
$$
 \alpha_i:=\alpha(\eta_i),\qquad
 \vartheta_i:=Q_{\widehat\nu}(\eta_i),\qquad
 \widehat\chi(t_n,\alpha_i)-k_nL\longrightarrow\vartheta_i.
$$
Define
$$
 R_-(t):=R
 \bigl(\chi(t,\alpha_1),\ldots,\chi(t,\alpha_4)\bigr).
$$
By \eqref{eq: 3.28}, $\log(1-R_-)$ is nondecreasing and bounded above by $0$.  Hence, for every $T>0$,
\begin{equation}
 \log(1-R_-(t_n+T))-\log(1-R_-(t_n-T))\longrightarrow0.
 \label{eq: 3.60}
\end{equation}
For these labels, set
$$
 \widehat\theta_{i,n}^-(\sigma)
 :=\widehat\chi(t_n+\sigma,\alpha_i)-k_nL,
$$
The displayed convergence at $\sigma=0$ and \eqref{eq: 3.32} yield locally uniform limits $\widehat\theta_{i,\infty}^-(\sigma)$.  For some $\delta>0$, an open interval $\widehat I^-$ of length less than $L$ contains all these positions for $|\sigma|\leq\delta$ and all large $n$.  Choose $c^-$ so that $\tan(\widehat\theta-c^-)$ has no pole on $\overline{\widehat I^-}$, and set
$$
 x_{i,n}^-(\sigma):=\tan(\widehat\theta_{i,n}^-(\sigma)-c^-),
 \qquad
 x_{i,\infty}^-(\sigma):=\tan(\widehat\theta_{i,\infty}^-(\sigma)-c^-).
$$
The formula in \eqref{eq: 3.55}, with these projective coordinates and $\widehat I^-$, defines $\Psi_{n,\sigma}^-$ and $\Psi_{\infty,\sigma}^-$.  Set
$$
 D_n^-(\sigma)
 :=\int_{\mathbb T_L}\Psi_{n,\sigma}^-\dd\nu_j^-(t_n+\sigma),
 \qquad
 D_\infty^-(\sigma)
 :=\int_{\mathbb T_L}\Psi_{\infty,\sigma}^-\dd\bar\nu_j^-(\sigma).
$$
The stability estimate \eqref{eq: 3.54} then implies uniform bounded-Lipschitz convergence of the corresponding kernels.  Together, this kernel convergence and \eqref{eq: 3.30} yield
$$
 \sup_{|\sigma|\leq\delta}
 |D_n^-(\sigma)-D_\infty^-(\sigma)|\longrightarrow0.
$$
The limiting kernel at $\sigma=0$ is strictly positive on the second selected lifted neighborhood, which has positive $\bar\nu_j^-(0)$-mass.  Hence $D_\infty^-(0)>0$.  The transport formula \eqref{eq: 3.31} and the continuous dependence of the limiting kernel show that $D_\infty^-$ is continuous.  Uniform separation of the four labels implies that $R_-(t_n+\sigma)/(1-R_-(t_n+\sigma))$ is bounded above and below by positive constants on this window.  The sign in \eqref{eq: 3.6} is now negative, and \eqref{eq: 3.18} yields
$$
\begin{aligned}
 \frac{\dd}{\dd\sigma}\log R_-(t_n+\sigma)
 &=-D_n^-(\sigma),\\
 \frac{\dd}{\dd\sigma}\log(1-R_-(t_n+\sigma))
 &=-\frac{R_-(t_n+\sigma)}
          {1-R_-(t_n+\sigma)}
   \frac{\dd}{\dd\sigma}\log R_-(t_n+\sigma)\\
 &=\frac{R_-(t_n+\sigma)}
         {1-R_-(t_n+\sigma)}
   D_n^-(\sigma).
\end{aligned}
$$
Consequently, for some $\kappa,\delta_0>0$ and all large $n$,
$$
 \frac{\dd}{\dd\sigma}\log(1-R_-(t_n+\sigma))
 \geq \kappa
 \quad\text{for a.e. }|\sigma|\leq\delta_0.
$$
Integrating contradicts \eqref{eq: 3.60}.  Hence $\#\supp\bar\nu_j^-(0)\leq3$.

\end{proof}

\begin{lem}\label{lem: 3.4}
For a forward sequence $t_n\to+\infty$, every limiting positive component $\bar\nu_j^+(0)$ is a single atom of mass $m_j^+$, and every limiting negative component $\bar\nu_j^-(0)$ is supported on at most two points.
\end{lem}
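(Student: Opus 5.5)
By \cref{lem: 3.3}, each limiting component $\bar\nu_j^\pm(0)$ has at most three support points, so \cref{lem: 3.2}(iii) applies to any finite set of them. The plan is to improve the bounds to one atom for positive components and two for negative ones. We reuse the contradiction scheme of \cref{lem: 3.3}, with four material labels chosen so that along $t_n$ the cross-ratio degenerates in the direction forbidden by monotonicity. No time-window argument is needed: we only evaluate $R$ at the times $t_n$.

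\emph{Positive case.} Suppose $\bar\nu_j^+(0)$ has two distinct support points. Apply \cref{lem: 3.2} with $\mu=\mu_j^+$ to obtain ordered lifts $\xi_1<\xi_2$ with $\xi_2-\xi_1<L$ and disjoint neighborhoods $U_1,U_2$. By \cref{lem: 3.3} the support is finite, so each $\xi_i$ is an atom of $\widehat\nu$. By the last assertion of \cref{lem: 3.2}(iii), choose continuity levels $\eta_1<\eta_2$ in $(F_{\widehat\nu}(\xi_1^-),F_{\widehat\nu}(\xi_1))$ and $\eta_3<\eta_4$ in $(F_{\widehat\nu}(\xi_2^-),F_{\widehat\nu}(\xi_2))$. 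Set $\alpha_i=\alpha(\eta_i)$; these are distinct increasing labels inside the arc $I_j^+$, by the strict monotonicity of $\alpha$ noted in the proof of \cref{lem: 3.2}. By \eqref{eq: 3.43}--\eqref{eq: 3.44}, the normalized lifted positions converge to $(\xi_1,\xi_1,\xi_2,\xi_2)$. In the chart-free formula \eqref{eq: 3.25}, the numerator factor $\sin(\theta_2-\theta_1)\to0$, while the denominator tends to $\sin(\xi_2-\xi_1)^2>0$. Here we use $0<\xi_2-\xi_1<L<\pi$, and all differences stay in $(0,L)$ because we work in a single ordered lift. Hence $R_+(t_n)\to0$. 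But $R_+$ is nondecreasing by \eqref{eq: 3.26} and $R_+(0)>0$, a contradiction. Thus the support is a single point, and mass conservation under push-forward and weak-* limits gives the mass $m_j^+$.

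\emph{Negative case.} Suppose $\bar\nu_j^-(0)$ has three support points $\xi_1<\xi_2<\xi_3$ in one ordered lift with $\xi_3-\xi_1<L$. All three are atoms. Choose one level in the atom interval of $\xi_1$, two in that of $\xi_2$, and one in that of $\xi_3$, giving labels whose positions converge to $(\xi_1,\xi_2,\xi_2,\xi_3)$. Then \eqref{eq: 3.25} gives
$$
R\to\frac{\sin(\xi_2-\xi_1)\sin(\xi_3-\xi_2)}{\sin(\xi_2-\xi_1)\sin(\xi_3-\xi_2)}=1,
$$
so $1-R_-(t_n)\to0$. This contradicts \eqref{eq: 3.28}, which gives $1-R_-(t)\ge 1-R_-(0)>0$ for $t\ge0$.

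The main obstacle is the bookkeeping that justifies the limits. The four labels must lie strictly inside the one-sign arc, so that \eqref{eq: 3.26} and \eqref{eq: 3.28} apply. They must also be evaluated in one common ordered lift whose span stays below $L$, which is why \cref{lem: 3.2}(iii) supplies $\sup U_\ell-\inf U_1<L$. Finally, one must handle the degenerate situation in which the transported arc nearly fills the circle and a single circular atom appears at both lift endpoints. This is handled by selecting only one representative, as in \cref{lem: 3.2}. With those points in place, continuity of \eqref{eq: 3.25} on the region where the differences lie in $[0,L)$ and the outer difference is bounded away from $0$ yields the limits above.
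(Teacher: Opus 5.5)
Your proposal is correct and follows the paper's proof essentially step for step. You use \cref{lem: 3.3} to get atomicity and \cref{lem: 3.2}(iii) for ordered lifts with span below $L$, with one representative chosen for a cut atom. You then place quantile levels inside the atom intervals and obtain the degenerations $(\xi_1,\xi_1,\xi_2,\xi_2)$ and $(\xi_1,\xi_2,\xi_2,\xi_3)$, which contradict the monotonicity of $R$ and of $1-R$ evaluated only at the times $t_n$. The only cosmetic difference is that you pass to the limit in the chart-free sine formula \eqref{eq: 3.25} rather than in tangent coordinates, and this spares you the choice of a projective chart.
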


\begin{proof}
By \cref{lem: 3.3}, every limiting component is supported on at most three circular points and is therefore atomic.  Both cases use the following notation.  Fix $\varepsilon\in\{+,-\}$ and an index $j$ of a component with that sign.  After passing to a further subsequence, apply \cref{lem: 3.2} with
$$
 \mu=\mu_j^\varepsilon,\qquad m=m_j^\varepsilon,\qquad
 \nu_n=\nu_j^\varepsilon(t_n),\qquad
 \nu=\bar\nu_j^\varepsilon(0).
$$
We denote the objects associated with this application by
$$
 a_\infty^\varepsilon,\ b_\infty^\varepsilon,\ k_n^\varepsilon,
 \ \alpha^\varepsilon,\ \widehat\nu_n^\varepsilon,
 \ \widehat\nu^\varepsilon,
$$
and set
$$
 Q_n^\varepsilon:=Q_{\widehat\nu_n^\varepsilon},
 \qquad
 Q^\varepsilon:=Q_{\widehat\nu^\varepsilon}.
$$
For every atom $\xi$ of $\widehat\nu^\varepsilon$, set
$$
 J_\xi^\varepsilon:=
 \bigl(F_{\widehat\nu^\varepsilon}(\xi^-),
       F_{\widehat\nu^\varepsilon}(\xi)\bigr).
$$
Then
\begin{equation}
 |J_\xi^\varepsilon|=\widehat\nu^\varepsilon(\{\xi\})>0,
 \qquad
 Q^\varepsilon(\eta)=\xi\quad(\eta\in J_\xi^\varepsilon),
 \label{eq: 3.61}
\end{equation}
and every $\eta\in J_\xi^\varepsilon$ is a continuity point of $Q^\varepsilon$.

If a circular atom lies at the cut, its circular mass is the sum of the lifted masses at $a_\infty^\varepsilon$ and $b_\infty^\varepsilon$.  We choose one endpoint with positive lifted mass and use no other representative of that circular atom.  Representatives of all other circular atoms lie in $(a_\infty^\varepsilon,b_\infty^\varepsilon)$, so the chosen representatives can be ordered with total span less than $L$.  Hence all levels used below satisfy
\begin{equation}
 \widehat\chi(t_n,\alpha^\varepsilon(\eta))-k_n^\varepsilon L
 =Q_n^\varepsilon(\eta)\longrightarrow Q^\varepsilon(\eta),
 \label{eq: 3.62}
\end{equation}
and their limiting representatives lie in one projective chart.

For the positive conclusion, take $\varepsilon=+$ and let $j$ be any positive-component index. Suppose first that $\bar\nu_j^+(0)$ has two distinct atoms.  Choose lifted representatives $\xi_{\mathrm L}<\xi_{\mathrm R}$ such that
$$
 \widehat\nu^+(\{\xi_{\mathrm L}\})>0,
 \qquad
 \widehat\nu^+(\{\xi_{\mathrm R}\})>0,
$$
and choose levels
$$
 \eta_1<\eta_2\quad\text{in }J_{\xi_{\mathrm L}}^+,
 \qquad
 \eta_3<\eta_4\quad\text{in }J_{\xi_{\mathrm R}}^+.
$$
Set $\alpha_i^+:=\alpha^+(\eta_i)$ and define
$$
 R_+(t):=R
 \bigl(\chi(t,\alpha_1^+),\ldots,\chi(t,\alpha_4^+)\bigr).
$$
In a projective chart containing the four labels, write
$$
 x_{i,n}:=
 \tan\bigl(
 \widehat\chi(t_n,\alpha_i^+)-k_n^+L-c
 \bigr).
$$
By \eqref{eq: 3.62}, for some $a<b$,
\begin{equation}
 (x_{1,n},x_{2,n},x_{3,n},x_{4,n})
 \longrightarrow(a,a,b,b).
 \label{eq: 3.63}
\end{equation}
Consequently,
$$
\begin{aligned}
 R_+(t_n)
 &=
 \frac{(x_{2,n}-x_{1,n})(x_{4,n}-x_{3,n})}
 {(x_{3,n}-x_{1,n})(x_{4,n}-x_{2,n})}
 \longrightarrow0.
\end{aligned}
$$
This contradicts \eqref{eq: 3.26}, since
$$
 R_+(t_n)\geq R_+(0)>0.
$$
Thus $\bar\nu_j^+(0)$ has exactly one support point.  Since its total mass is $m_j^+$, it is a single atom of mass $m_j^+$.

For the negative conclusion, take $\varepsilon=-$ and let $j$ be any negative-component index, using an independent application of the preceding setup.  Suppose that $\bar\nu_j^-(0)$ has three distinct atoms.  Choose lifted representatives
$$
 \xi_1<\xi_2<\xi_3,
 \qquad
 \widehat\nu^-(\{\xi_i\})>0\quad(i=1,2,3),
$$
and choose levels
$$
 \eta_1\in J_{\xi_1}^-,
 \qquad
 \eta_2<\eta_3\quad\text{in }J_{\xi_2}^-,
 \qquad
 \eta_4\in J_{\xi_3}^-.
$$
Choose a projective chart containing these four labels, with parameter $c$ as in \eqref{eq: 3.1}.  Set $\alpha_i^-:=\alpha^-(\eta_i)$ and define
$$
 R_-(t):=R
 \bigl(\chi(t,\alpha_1^-),\ldots,\chi(t,\alpha_4^-)\bigr).
$$
Then
$$
 x_{i,n}:=
 \tan\bigl(
 \widehat\chi(t_n,\alpha_i^-)-k_n^-L-c
 \bigr).
$$
By \eqref{eq: 3.62}, for some $a<b<d$,
\begin{equation}
 (x_{1,n},x_{2,n},x_{3,n},x_{4,n})
 \longrightarrow(a,b,b,d).
 \label{eq: 3.64}
\end{equation}
It follows that
$$
\begin{aligned}
 R_-(t_n)
 &=
 \frac{(x_{2,n}-x_{1,n})(x_{4,n}-x_{3,n})}
 {(x_{3,n}-x_{1,n})(x_{4,n}-x_{2,n})}
 \longrightarrow
 \frac{(b-a)(d-b)}{(b-a)(d-b)}
 =1.
\end{aligned}
$$
On the other hand, \eqref{eq: 3.28} yields
$$
 1-R_-(t_n)
 \geq1-R_-(0)>0,
$$
which is impossible.  Therefore $\#\supp\bar\nu_j^-(0)\leq2$.
\end{proof}

We now proceed to prove \cref{thm: 1.3}.
\begin{proof}[Proof of \cref{thm: 1.3}]
The subsequential classification implies distance convergence along the entire half-orbit.  First note that $\mathfrak A_\ell(m)$ is weak-* closed for every integer $\ell\geq1$.  If $\rho_n\in\mathfrak A_\ell(m)$ and $\rho_n\stackrel{*}{\rightharpoonup}\rho$, but $\rho$ had $\ell+1$ distinct support points, one could choose $\ell+1$ disjoint open neighborhoods with positive $\rho$-mass and $\rho$-null boundaries.  For large $n$, each neighborhood would have positive $\rho_n$ mass, forcing at least $\ell+1$ support points.  This is impossible.  Since the fixed-mass measure space is compact in $d_{\mathrm{BL}}$, if either forward distance in \eqref{eq: 1.22} failed to tend to zero, a sequence with distance bounded below would have a convergent subsequence contradicting \cref{lem: 3.4}.  Thus \eqref{eq: 1.22} holds along the full positive half-orbit.

Fix $1\leq r<\infty$ and $0\leq\alpha<1/r$, and put $B=\|g_0\|_\infty$.  If \eqref{eq: 1.24} failed, there would be $\varepsilon_0>0$ and $t_n\to+\infty$ such that
\begin{equation}
 \dist_{W^{\alpha,r}}\bigl(g(t_n),\mathcal P_{N_+}(B)\bigr)
 \geq\varepsilon_0.
 \label{eq: 3.65}
\end{equation}
Uniform $BV$ and $L^\infty$ bounds allow us to extract a subsequence such that
\begin{equation}
 g(t_n)\longrightarrow g_*
 \quad\text{in }W^{\alpha,r}(\mathbb T_L).
 \label{eq: 3.66}
\end{equation}
After applying \cref{lem: 3.1} to the finite family $\{\mu_j^+\}_{j=1}^{M_+}\cup\{\mu_j^-\}_{j=1}^{M_-}$ and to the labels $b_1,\ldots,b_K$, pass to a common subsequence along which all these measures and trajectories converge jointly.  For every $n$,
\begin{equation}
 Dg(t_n)=
 \sum_j\nu_j^+(t_n)-\sum_j\nu_j^-(t_n)
 +\sum_{k=1}^Kc_k\delta_{b_k(t_n)}.
 \label{eq: 3.67}
\end{equation}
Each term passes to the distributional limit.  By \cref{lem: 3.4}, the positive non-atomic components contribute at most $M_+$ limiting support points, the negative components at most $2M_-$, and the transported initial atoms at most $K$.  Cancellation can only reduce the support, so we have
\begin{equation}
 \#\supp Dg_*
 \leq K+M_++2M_-=N_+.
 \label{eq: 3.68}
\end{equation}
Note also that $\|g_*\|_\infty\leq B$.  Hence $g_*\in\mathcal P_{N_+}(B)$, which contradicts \eqref{eq: 3.65} and \eqref{eq: 3.66}.

For the time-reversed solution \eqref{eq: 2.31} and each material label $\beta$, the map $t\mapsto\chi(-t,\beta)$ is its flow, since
$$
 \partial_t\chi(-t,\beta)
 =-u(-t,\chi(-t,\beta))
 =2\widetilde G(t,\chi(-t,\beta)).
$$
At time zero,
\begin{equation}
 D\widetilde g_0=-Dg_0
 =\sum_{j=1}^{M_-}\mu_j^-
 -\sum_{j=1}^{M_+}\mu_j^+
 +\sum_{k=1}^K(-c_k)\delta_{b_k}.
 \label{eq: 3.69}
\end{equation}
Thus original negative components are positive components for $\widetilde g$, and original positive components are negative.  Applying the forward result to $\widetilde g$ proves \eqref{eq: 1.23}.  The corresponding forward jump count is $K+M_-+2M_+=N_-,$ which yields \eqref{eq: 1.25}.  The compactness and invariance of $\mathcal P_N(B)$ follow from \cref{lem: 2.2}.
\end{proof}

\subsection{Countably many components and jump profiles}
\label{subsec: 3.4}

\begin{proof}[Proof of \cref{thm: 1.4}]
Set $B:=\|g_0\|_\infty$ and fix $1\leq r<\infty$ and $0\leq\alpha<1/r$.  Recall the limit sets \eqref{eq: 1.28}--\eqref{eq: 1.29}.  Since the orbit is uniformly bounded in $BV\cap L^\infty$, \cref{lem: 2.1} shows that the $L^1$ closures of the time tails coincide with their $W^{\alpha,r}$ closures. Precompactness from \cref{lem: 2.3}, together with the nestedness of the tail closures, makes $\Omega_\pm(g_0)$ nonempty and compact.

These sets are invariant under the complete flow.  Let $h\in\Omega_+(g_0)$ and choose $t_n\to+\infty$ such that $g(t_n)\to h$ in $L^1$.  For every fixed $\tau$, \cref{lem: 2.4} yields
$$
g(t_n+\tau)=S_\tau g(t_n)\longrightarrow S_\tau h.
$$
Since $t_n+\tau\to+\infty$, it follows that $S_\tau h\in\Omega_+(g_0)$.  The same calculation with $t_n\to-\infty$ proves $S_\tau\Omega_-(g_0)\subset\Omega_-(g_0)$. Applying $S_\tau$ to the same inclusions with $-\tau$ in place of $\tau$ proves the reverse inclusions.  Hence
$$
S_\tau\Omega_\pm(g_0)=\Omega_\pm(g_0).
$$

To prove attraction, suppose, for instance, that the positive-time assertion fails.  Then there exist $\varepsilon_0>0$ and $t_n\to+\infty$ such that
$$
\dist_{W^{\alpha,r}}
 \bigl(g(t_n),\Omega_+(g_0)\bigr)\geq\varepsilon_0.
$$
By precompactness, we may pass to a subsequence such that $g(t_n)\to h$ in $W^{\alpha,r}$.  Since $t_n\to+\infty$, the definition of $\Omega_+(g_0)$ implies that $h\in\Omega_+(g_0)$, a contradiction. The same argument for a sequence $t_n\to-\infty$ proves the negative-time assertion in \eqref{eq: 1.30}.

For the componentwise assertions and the description of the limit profiles, write
$$
\nu_j^\pm(t)=\chi(t)_\#\mu_j^\pm,
\qquad
b_k(t)=\chi(t,b_k).
$$
Fix one component and an arbitrary sequence $t_n\to+\infty$.  For any finite collection of transported components, \cref{lem: 3.1} supplies a common subsequence for their measures and for the material labels used in the cross-ratio argument. On the transported subarc containing the selected labels,
$$
 Dg(t)\llcorner\chi(t,I_j^\pm)=\pm\nu_j^\pm(t).
$$
The cross-ratio identities and contradiction arguments in \cref{lem: 3.3,lem: 3.4} therefore imply that every forward limit of a fixed positive component is of the form $m_j^+\delta_x$, whereas every forward limit of a fixed negative component belongs to $\mathfrak A_2(m_j^-)$.  Positive measures of fixed mass form a weak-* compact set, and $\mathfrak A_2(m_j^-)$ is closed.  Hence any failure of the distance convergence in \eqref{eq: 1.22} would produce a limit contradicting \cref{lem: 3.4}.  Time reversal proves \eqref{eq: 1.23}.

Now let $h\in\Omega_+(g_0)$.  By the definition of $\Omega_+(g_0)$, there exists $t_n\to+\infty$ such that
$$
g(t_n)\longrightarrow h
\qquad\text{in }L^1.
$$
Choose enumerations of the at most countable sets $J_+$, $J_-$, and $J_0$. Nested applications of \cref{lem: 3.1} to the first $N$ entries of these enumerations yield a diagonal subsequence along which the following limits hold for every fixed index:
$$
\nu_j^+(t_n)\stackrel{*}{\rightharpoonup}
m_j^+\delta_{x_j^+},
\qquad
\nu_j^-(t_n)\stackrel{*}{\rightharpoonup}\rho_j^-,
\qquad
\rho_j^-\in\mathfrak A_2(m_j^-),
$$
and
$$
b_k(t_n)\longrightarrow y_k^+.
$$
For every $n$,
\begin{equation}
Dg(t_n)
=
\sum_{j\in J_+}\nu_j^+(t_n)
-\sum_{j\in J_-}\nu_j^-(t_n)
+\sum_{k\in J_0}c_k\delta_{b_k(t_n)}.
\label{eq: 3.70}
\end{equation}
Since $g(t_n)\to h$ in $L^1$, we have $Dg(t_n)\to Dh$ in distributions.

Let $F_+\subset J_+$, $F_-\subset J_-$, and $F_0\subset J_0$ be finite, and let $\Sigma_{F,n}$ denote the corresponding partial sum in \eqref{eq: 3.70}.  Its limit is
$$
\Sigma_F
=
\sum_{j\in F_+}m_j^+\delta_{x_j^+}
-\sum_{j\in F_-}\rho_j^-
+\sum_{k\in F_0}c_k\delta_{y_k^+}.
$$
For every test function $\varphi$,
$$
\begin{aligned}
\bigl|\langle Dh-\Sigma_F,\varphi\rangle\bigr|
&\leq
\|\varphi\|_\infty
\left(
\sum_{j\in J_+\setminus F_+}m_j^+
+\sum_{j\in J_-\setminus F_-}m_j^-
+\sum_{k\in J_0\setminus F_0}|c_k|
\right).
\end{aligned}
$$
Here push-forward preserves the mass of each component, and \eqref{eq: 1.27} bounds the remaining tail uniformly in $n$ for $Dg(t_n)-\Sigma_{F,n}$.  Passing to the limit proves the displayed estimate. Letting the finite sets exhaust the corresponding index sets and using \eqref{eq: 1.27} proves \eqref{eq: 1.31}. Since $\|\rho_j^-\|_{\mathcal M}=m_j^-$, the same summability condition also ensures absolute convergence in total variation.  Applying the argument to the reversed solution proves \eqref{eq: 1.32}.
\end{proof}

\begin{proof}[Proof of \cref{cor: 1.5}]
Set $B:=\|g_0\|_\infty$. Decompose the open sets
$$
\{g_{0,\theta}>0\}
=
\bigsqcup_{j\in J_+}I_j^+,
\qquad
\{g_{0,\theta}<0\}
=
\bigsqcup_{j\in J_-}I_j^-.
$$
Each is a countable disjoint union of open arcs.  Restricting $(g_{0,\theta})_+\,\dd\theta$ and $(g_{0,\theta})_-\,\dd\theta$ to these components yields \eqref{eq: 1.26} with $J_0=\varnothing$, while
$$
\sum_{j\in J_+}m_j^+
+\sum_{j\in J_-}m_j^-
=
\int_{\mathbb T_L}|g_{0,\theta}(\theta)|\dd\theta
<\infty,
$$
thus \eqref{eq: 1.27} holds.  The relaxation conclusion follows from \cref{thm: 1.4}, while the gradient conclusion follows from \cref{thm: 1.1}.

Suppose now that $|t_n|\to\infty$ and $g(t_n)\rightharpoonup h$ weakly in $L^2$.  Uniform $BV$ compactness shows that every subsequence has a further subsequence, still denoted by $t_n$, such that $g(t_n)\longrightarrow\widetilde h \qquad\text{in }L^1$.  The common $L^\infty$ bound, inherited by $\widetilde h$, implies
$$
\|g(t_n)-\widetilde h\|_{L^2}^2
\leq
2B\|g(t_n)-\widetilde h\|_{L^1}
\longrightarrow0.
$$
Since the original sequence converges weakly to $h$, we have $\widetilde h=h$.  Thus every subsequence has a further subsequence converging strongly to $h$ in $L^2$, and the full sequence therefore converges strongly in $L^2$.  Since $\mathbb T_L$ has finite measure, the convergence also holds in $L^1$, and \cref{lem: 2.1} upgrades the $L^1$ convergence to every stated $W^{\alpha,r}$.

After passing to a subsequence, either $t_n\to+\infty$ or $t_n\to-\infty$.  Hence $h$ belongs to the corresponding compact invariant limit set.  The formulas \eqref{eq: 1.31}--\eqref{eq: 1.32} show that $Dh$ is purely atomic, while invariance implies that the complete orbit through $h$ remains in this compact set.
\end{proof}

\begin{proof}[Proof of \cref{cor: 1.7}]
Set $B:=\|g_0\|_\infty$. For the finite decomposition, the attraction in \cref{thm: 1.3} and the closedness of $\mathcal P_N(B)$ from \cref{lem: 2.2} imply
\begin{equation}
\Omega_+(g_0)\subset\mathcal P_{N_+}(B),
\qquad
\Omega_-(g_0)\subset\mathcal P_{N_-}(B).
\label{eq: 3.71}
\end{equation}

Suppose that $g_0$ is a Morse function with $n$ maxima and $n$ minima. Its critical points alternate on the circle.  Between a minimum and the next maximum one has $g_{0,\theta}>0$, while between that maximum and the next minimum one has $g_{0,\theta}<0$.  Consequently,
\begin{equation}
K=0,
\qquad
M_+=M_-=n,
\qquad
N_+=N_-=3n.
\label{eq: 3.72}
\end{equation}
For each fixed $n$, persistence of nondegenerate critical points under small $C^2$ perturbations shows that this subclass is open.  The density statement for the union of all Morse subclasses is the standard Morse genericity theorem \cite{Hirsch}.

The gradient conclusion and \eqref{eq: 1.36} now follow from \cref{thm: 1.1,thm: 1.3} and \eqref{eq: 3.72}.  Finally, let $t_k\to+\infty$ or $t_k\to-\infty$ and suppose that $g(t_k)\rightharpoonup h$ weakly in $L^2$.  By uniform $BV$ compactness, every subsequence has a further subsequence converging strongly in $L^1$.  The common $L^\infty$ bound upgrades each such convergence to $L^2$, and weak convergence identifies the limit as $h$.  Hence the full sequence converges strongly to $h$ in both $L^1$ and $L^2$. Thus $h$ belongs to the corresponding limit set, and \eqref{eq: 3.71}--\eqref{eq: 3.72} imply
$$
h\in\mathcal P_{3n}(B).
$$
By \cref{lem: 2.2}, $\mathcal P_{3n}(B)$ is compact and invariant, so the complete orbit through $h$ remains in this set.
\end{proof}

\section{Solutions with finitely many jumps}
\label{sec: 4}

This section is devoted to the proof of Theorem \ref{thm: 1.8}. To this end, we first make some preparations in Subsection 4.1.
\subsection{Equations for the jump points}

Denote $\langle f\rangle:=L^{-1}\int_{\mathbb T_L}f\dd\theta$ for $f\in L^1(\mathbb T_L)$, and let $K_L$ be the Green kernel in \eqref{eq: 2.1}.  Let $\mathcal R$ be the mean-zero periodic primitive of $2K_L-1/(2L)$.  The formula for $K_L$ is the $m=3$ specialization of the kernel in \cite{EMS}. The computation below also verifies the normalization and distributional identity used in the jump equations.

For $0\leq r\leq L$, define
\begin{equation}
 \mathcal R(r):=
 \frac{\sin(2r-L)+\sin L}{4\sin L}-\frac{r}{2L},
 \label{eq: 4.1}
\end{equation}
and extend $\mathcal R$ $L$-periodically.  Since
$$
 \mathcal R(0)=\mathcal R(L)=0,
$$
the periodic extension is continuous.  Moreover, for $0\leq r\leq L$,
$$
\begin{aligned}
 \mathcal R(L-r)
 &=\frac{\sin(L-2r)+\sin L}{4\sin L}
   -\frac{L-r}{2L}\\
 &=\frac{-\sin(2r-L)+\sin L}{4\sin L}
   -\frac12+\frac r{2L}\\
 &=-\frac{\sin(2r-L)+\sin L}{4\sin L}
   +\frac r{2L}
 =-\mathcal R(r).
\end{aligned}
$$
Thus $\mathcal R$ is odd on the periodic circle and
\begin{equation}
 \int_0^L\mathcal R(r)\dd r=0.
 \label{eq: 4.2}
\end{equation}

On the open interval $(0,L)$, a direct computation yields
$$
\begin{aligned}
 \mathcal R'(r)
 =\frac{\cos(2r-L)}{2\sin L}-\frac1{2L},\quad
 \mathcal R''(r)
 =-\frac{\sin(2r-L)}{\sin L},\quad
 \mathcal R'''(r)
 =-\frac{2\cos(2r-L)}{\sin L}.
\end{aligned}
$$
Consequently, one has
\begin{equation}
 \mathcal R'''+4\mathcal R'=-\frac2L
 \qquad\text{on }(0,L).
 \label{eq: 4.3}
\end{equation}
The one-sided second derivatives at the periodic cut are
$$
 \mathcal R''(0+)=1,
 \qquad
 \mathcal R''(L-)=-1.
$$
Hence the periodic distribution $\mathcal R'''$ has an atom of mass $2$ at $0$.  Combining this jump with \eqref{eq: 4.3},
\begin{equation}
 (\partial_\theta^3+4\partial_\theta)\mathcal R
 =2\left(\delta_0-\frac1L\right).
 \label{eq: 4.4}
\end{equation}
Equivalently, $K_L$ satisfies
$$
 \mathcal R'=2K_L-\frac1{2L}.
$$
It follows from \eqref{eq: 4.4} and the Green-kernel identity that both $A\mathcal R'$ and $A(2K_L-1/(2L))$ equal $2\delta_0-2/L$.  Since $A$ is invertible, the claimed identity follows.  The normalization \eqref{eq: 4.2} fixes the additive constant in $\mathcal R$.

Let $g$ be a nonconstant step profile.  Its minimal representation has $N\geq2$ and may be written as
\begin{equation}
 \begin{gathered}
 Dg=\sum_{i=0}^{N-1}\Delta_i\delta_{a_i},
 \qquad
 \Delta_i:=g(a_i+)-g(a_i-)\neq0,\\
 \sum_{i=0}^{N-1}\Delta_i=0,
 \qquad
 0\leq a_0<L,\qquad
 a_0<a_1<\cdots<a_{N-1}<a_0+L.
 \end{gathered}
 \label{eq: 4.5}
\end{equation}
The last identity holds since a distributional derivative on a circle has zero total mass.  Averaging $G_{\theta\theta}+4G=g$ yields
$$
 \langle G\rangle=\frac14\langle g\rangle,
 \qquad
 \langle u\rangle=2\langle G\rangle=\frac12\langle g\rangle.
$$
The identities \eqref{eq: 4.2} and \eqref{eq: 4.4}, together with $\sum_i\Delta_i=0$, determine the unique velocity
\begin{equation}
 u(\theta)=\frac{\langle g\rangle}{2}
 +\sum_{j=0}^{N-1}\Delta_j\mathcal R(\theta-a_j).
 \label{eq: 4.6}
\end{equation}
By applying $\partial_\theta^3+4\partial_\theta$ to the right-hand side, we obtain
$$
\begin{aligned}
 \sum_j2\Delta_j\left(\delta_{a_j}-\frac1L\right)
 &=2\sum_j\Delta_j\delta_{a_j}
   -\frac2L\sum_j\Delta_j
 =2Dg,
\end{aligned}
$$
which is the second equation in \eqref{eq: 1.6}.  The prescribed mean $\langle u\rangle=\frac12\langle g\rangle$ fixes the remaining constant: the Fourier frequencies on $\mathbb T_L$ are $3k$, so the constant mode is the entire kernel of $\partial_\theta^3+4\partial_\theta$.

Let $a_i(t)=\widehat\chi(t,a_i(0))$ in an ordered lift.  By \eqref{eq: 2.15}, these remain the distinct jumps of the minimal representation for every finite time.  Moreover, \eqref{eq: 2.35} and \eqref{eq: 2.5} imply $u\in C(\mathbb R;C^1)$, so every $a_i$ is $C^1$ and $\dot a_i(t)=u(t,a_i(t))$ pointwise.  Since $\mathcal R(0)=0$, the exact jump ODE from \eqref{eq: 4.6} is
\begin{equation}
 \dot a_i=\frac{\langle g\rangle}{2}
 +\sum_{j=0}^{N-1}\Delta_j\mathcal R(a_i-a_j).
 \label{eq: 4.7}
\end{equation}
Mean conservation also holds directly for $BV$ data.  From the distributional equation $g_t=-uDg$ and integration by parts against the $C^1$ function $u$, we obtain, for almost every $t$,
\begin{align}
 \frac{\dd}{\dd t}\int_{\mathbb T_L}g\dd\theta
 &=-\int_{\mathbb T_L}u\dd Dg
 =\int u_\theta g\dd\theta
 \notag\\
 &=2\int G_\theta(G_{\theta\theta}+4G)\dd\theta
 \notag
 =\int\partial_\theta(G_\theta^2+4G^2)\dd\theta=0.
 \label{eq: 4.8}
\end{align}
Here $g\in L^\infty$ and $G_{\theta\theta}=g-4G\in L^\infty$, so every product belongs to $L^1$, while the last integral is the integral of a periodic weak derivative.  The mean term $\langle g\rangle/2$ in \eqref{eq: 4.7} produces only a common rotation and does not affect the interval lengths.

If $N=2$, then $\Delta_1=-\Delta_0$.  Let $x=a_1-a_0\in(0,L)$.  From \eqref{eq: 4.7} and the oddness of $\mathcal R$, we have
$$
\begin{aligned}
 \dot a_1
 &=\frac{\langle g\rangle}{2}+\Delta_0\mathcal R(x),\\
 \dot a_0
 &=\frac{\langle g\rangle}{2}+\Delta_1\mathcal R(-x)
 =\frac{\langle g\rangle}{2}+\Delta_0\mathcal R(x).
\end{aligned}
$$
Therefore
\begin{equation}
 \dot x=\dot a_1-\dot a_0=0.
 \label{eq: 4.9}
\end{equation}
By \eqref{eq: 1.8} and the mean-conservation computation above, the jump size $\Delta_0$ and the mean $\langle g\rangle$ are constant in time, and \eqref{eq: 4.9} shows that $x$ is also constant.  Hence the common velocity
$$
 c:=\frac{\langle g\rangle}{2}+\Delta_0\mathcal R(x)
$$
is constant, so we have
$$
 a_i(t)=a_i(0)+ct,
 \qquad
 g(t,\theta)=g_0(\theta-ct),\qquad i=0,1.
$$
Thus every two-jump profile is a travelling wave.

\subsection{Solutions with three jumps}

Let $\gamma_0,\gamma_1,\gamma_2$ be the constant values on the three consecutive arcs.  With the convention that $\gamma_i$ is the value immediately to the right of $a_i$, the jumps are
\begin{equation}
 \Delta_0=\gamma_0-\gamma_2,
 \qquad
 \Delta_1=\gamma_1-\gamma_0,
 \qquad
 \Delta_2=\gamma_2-\gamma_1.
 \label{eq: 4.10}
\end{equation}
Set the three interval lengths
\begin{equation}
 x=a_1-a_0,
 \qquad
 y=a_2-a_1,
 \qquad
 z=L-x-y.
 \label{eq: 4.11}
\end{equation}

\begin{prop}\label{prop: 4.1}
On the open simplex
\begin{equation}
 \Sigma:=\{(x,y,z):x,y,z>0,\ x+y+z=L\},
 \label{eq: 4.12}
\end{equation}
the equations for $x,y,z$ are
\begin{equation}
 \begin{aligned}
  \dot x&=\Delta_2F(x,y,z),
  &\dot y&=\Delta_0F(x,y,z),\\
  \dot z&=\Delta_1F(x,y,z),
  &F(x,y,z)&:=\frac{\sin x\sin y\sin z}{\sin L}>0.
 \end{aligned}
 \label{eq: 4.13}
\end{equation}
\end{prop}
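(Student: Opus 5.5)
The plan is to read \eqref{eq: 4.13} off the exact jump ODE \eqref{eq: 4.7} with $N=3$, using only the oddness $\mathcal R(-r)=\mathcal R(L-r)=-\mathcal R(r)$, periodicity, $\mathcal R(0)=0$, and $\sum_i\Delta_i=0$. Work in an ordered lift $a_0<a_1<a_2<a_0+L$, so that $a_1-a_0=x$, $a_2-a_1=y$, $a_0+L-a_2=z$. The mean term $\langle g\rangle/2$ cancels in every difference of velocities.

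\emph{Step 1: reduce $\dot x$ to a symmetric sum.} Write $\dot x=\dot a_1-\dot a_0=\sum_j\Delta_j[\mathcal R(a_1-a_j)-\mathcal R(a_0-a_j)]$. The two velocities are
$$
 \dot a_1-\tfrac{\langle g\rangle}{2}=\Delta_0\mathcal R(x)-\Delta_2\mathcal R(y),
 \qquad
 \dot a_0-\tfrac{\langle g\rangle}{2}=-\Delta_1\mathcal R(x)+\Delta_2\mathcal R(z).
$$
The second uses $\mathcal R(-(x+y))=\mathcal R(L-x-y)=\mathcal R(z)$. Subtracting and using $\Delta_0+\Delta_1=-\Delta_2$ gives
$$
 \dot x=-\Delta_2\bigl(\mathcal R(x)+\mathcal R(y)+\mathcal R(z)\bigr).
$$
Cyclically relabelling the jumps ($a_0\to a_1\to a_2$, hence $x\to y\to z$ and $\Delta_2\to\Delta_0\to\Delta_1$) gives the same bracket for $\dot y$ and $\dot z$, with prefactors $-\Delta_0$ and $-\Delta_1$. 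It therefore suffices to prove the identity
$$
 \mathcal R(x)+\mathcal R(y)+\mathcal R(z)=-\frac{\sin x\sin y\sin z}{\sin L},
 \qquad (x,y,z)\in\Sigma.
$$

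\emph{Step 2: the trigonometric identity.} This is the only real computation. Since $x,y,z\in(0,L)$, formula \eqref{eq: 4.1} applies directly. The linear parts contribute $-(x+y+z)/(2L)=-1/2$. For the sine parts, set $\alpha=2x-L$, $\beta=2y-L$, $\gamma=2z-L$, so that $\alpha+\beta+\gamma=-L$. Then use the classical identity
$$
 \sin\alpha+\sin\beta+\sin\gamma-\sin(\alpha+\beta+\gamma)
 =4\sin\tfrac{\alpha+\beta}{2}\sin\tfrac{\beta+\gamma}{2}\sin\tfrac{\gamma+\alpha}{2}.
$$
Here $(\alpha+\beta)/2=-z$, $(\beta+\gamma)/2=-x$, and $(\gamma+\alpha)/2=-y$. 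This yields $\sin\alpha+\sin\beta+\sin\gamma=-\sin L-4\sin x\sin y\sin z$. Adding the three copies of $\sin L$ from \eqref{eq: 4.1} and dividing by $4\sin L$ gives $\tfrac12-\sin x\sin y\sin z/\sin L$. Adding the linear contribution $-1/2$ proves the identity. I expect the main risk to be sign and labelling bookkeeping, both in the lift (which difference equals $z$ versus $-z$ modulo $L$) and in the sum-to-product step. I would double-check it on the symmetric point $x=y=z=L/3$.

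\emph{Step 3: positivity.} Since $L=2\pi/3<\pi$, we have $\sin L>0$, and on $\Sigma$ each of $x,y,z$ lies in $(0,L)\subset(0,\pi)$. Hence $F>0$. As a consistency check, the three right-hand sides sum to $(\Delta_0+\Delta_1+\Delta_2)F=0$, matching $x+y+z=L$.
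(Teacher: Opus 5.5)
Your proof is correct and follows the paper's route. You read $\dot x$ off the jump ODE \eqref{eq: 4.7}, cancel the mean term, reduce to a single trigonometric identity, and obtain $\dot y,\dot z$ by cyclic relabelling, whereas the paper gets $\dot z$ from $\dot z=-\dot x-\dot y$. The only difference is cosmetic: you use periodicity and oddness to write the bracket in the symmetric form $-(\mathcal R(x)+\mathcal R(y)+\mathcal R(z))$ instead of the paper's $\mathcal R(x+y)-\mathcal R(x)-\mathcal R(y)$, which makes the relabelling transparent and lets you cite the classical identity for $\sin\alpha+\sin\beta+\sin\gamma-\sin(\alpha+\beta+\gamma)$.
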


\begin{proof}
From \eqref{eq: 4.7},
$$
\begin{aligned}
 \dot a_1
 &=\frac{\langle g\rangle}{2}
   +\Delta_0\mathcal R(x)-\Delta_2\mathcal R(y),\\
 \dot a_0
 &=\frac{\langle g\rangle}{2}
   -\Delta_1\mathcal R(x)-\Delta_2\mathcal R(x+y).
\end{aligned}
$$
Subtracting and using $\Delta_0+\Delta_1=-\Delta_2$, we obtain
\begin{equation}
 \dot x=\Delta_2
 [\mathcal R(x+y)-\mathcal R(x)-\mathcal R(y)].
 \label{eq: 4.14}
\end{equation}
In the bracket, the linear terms in \eqref{eq: 4.1} cancel.  The remaining numerator is
$$
\begin{aligned}
 &\sin(2x+2y-L)-\sin(2x-L)-\sin(2y-L)-\sin L\\
 &=\bigl[\sin(2x+2y-L)-\sin(2x-L)\bigr]
   -\bigl[\sin(2y-L)+\sin L\bigr]\\
 &=2\sin y\cos(2x+y-L)-2\sin y\cos(y-L)\\
 &=2\sin y\bigl[\cos(2x+y-L)-\cos(y-L)\bigr]\\
 &=-4\sin y\sin(x+y-L)\sin x\\
 &=4\sin x\sin y\sin(L-x-y).
\end{aligned}
$$
Substitution into \eqref{eq: 4.1} yields
\begin{equation}
 \mathcal R(x+y)-\mathcal R(x)-\mathcal R(y)
 =\frac{\sin x\sin y\sin(L-x-y)}{\sin L},
 \label{eq: 4.15}
\end{equation}
which proves the first equation in \eqref{eq: 4.13}.  Cyclically relabeling the three jumps yields $\dot y=\Delta_0F(x,y,z)$.  Finally,
$$
\begin{aligned}
 \dot z&=-\dot x-\dot y
 =-(\Delta_2+\Delta_0)F(x,y,z)
 =\Delta_1F(x,y,z),
\end{aligned}
$$
which proves the third equation.
\end{proof}

\begin{proof}[Proof of \cref{thm: 1.8}]
A nonconstant periodic step profile cannot have exactly one nonzero jump, since its distributional derivative has zero total mass.  The constant case is immediate, and the two-jump case follows from \eqref{eq: 4.9}.  It remains to consider a minimal three-jump profile.

Set
$$
 (x_0,y_0,z_0):=(x(0),y(0),z(0)).
$$
Introduce a new time parameter by
\begin{equation}
 \frac{\dd\sigma}{\dd t}=F(x(t),y(t),z(t)),
 \qquad \sigma(0)=0.
 \label{eq: 4.16}
\end{equation}
\begingroup
\sloppy
Since $F(x,y,z)>0$ in $\Sigma$, $\sigma$ is strictly increasing. By Proposition~\ref{prop: 4.1} and \eqref{eq: 4.16}, the reparametrized system has the solution
\par
\endgroup
\begin{equation}
 (x(\sigma),y(\sigma),z(\sigma))
 =(x_0,y_0,z_0)+\sigma(\Delta_2,\Delta_0,\Delta_1).
 \label{eq: 4.17}
\end{equation}
Thus $(x,y,z)$ traverses a line segment in $\Sigma$ as $t$ increases.

In a minimal three-jump representation, all three jumps $\Delta_i$ are nonzero.  Define the maximal interval
\begin{equation}
 I:=(\sigma_-,\sigma_+)
 :=\{\sigma\in\mathbb R:
 x_0+\sigma\Delta_2>0,
 \ y_0+\sigma\Delta_0>0,
 \ z_0+\sigma\Delta_1>0\}.
 \label{eq: 4.18}
\end{equation}
Since $\Delta_0+\Delta_1+\Delta_2=0$ and every $\Delta_i$ is nonzero, $(\Delta_2,\Delta_0,\Delta_1)$ has both positive and negative components. Hence $I$ is a bounded open interval containing $0$, and its endpoints lie on $\partial\Sigma$.  Physical time is recovered from
\begin{equation}
 t(\sigma)
 =\int_0^\sigma
 \frac{\dd\rho}{
 F(x_0+\rho\Delta_2,y_0+\rho\Delta_0,z_0+\rho\Delta_1)}.
 \label{eq: 4.19}
\end{equation}

Extend $x,y,z$ continuously to $\overline I$, and let $\sigma_*$ be either endpoint of $I$.  Suppose first that the line meets the interior of an edge. After a cyclic relabeling, we may assume that $x(\sigma_*)=0$.  Then $x(\sigma)=|\Delta_2||\sigma-\sigma_*|$ on the interior side of the endpoint, while $y(\sigma_*)$ and $z(\sigma_*)$ are positive.  It follows that
$$
 F(x(\sigma),y(\sigma),z(\sigma))
 \sim\frac{|\Delta_2|\sin y(\sigma_*)\sin z(\sigma_*)}{\sin L}
 |\sigma-\sigma_*|.
$$
If the line meets a vertex, we may assume after a cyclic relabeling that $x(\sigma_*)=y(\sigma_*)=0$ and $z(\sigma_*)=L$.  Since $x(\sigma)=|\Delta_2||\sigma-\sigma_*|$, $y(\sigma)=|\Delta_0||\sigma-\sigma_*|$, and $\sin z(\sigma)\to\sin L$ on the interior side, we have
$$
 F(x(\sigma),y(\sigma),z(\sigma))
 \sim |\Delta_2\Delta_0||\sigma-\sigma_*|^2.
$$
In both cases, there are $C,\delta>0$ such that $F(x(\sigma),y(\sigma),z(\sigma))\leq C|\sigma-\sigma_*|$ whenever $\sigma\in I$ and $0<|\sigma-\sigma_*|<\delta$.  Setting $\xi=|\sigma-\sigma_*|$, the corresponding part of \eqref{eq: 4.19} is bounded below by
$$
 \int_\varepsilon^\delta
 \frac{\dd\xi}{C\xi}
 \longrightarrow+\infty.
$$
Thus the integral in \eqref{eq: 4.19} diverges at both endpoints, and $t:I\to\mathbb R$ is an increasing bijection,
$$
 \sigma(t)\longrightarrow\sigma_-\quad(t\to-\infty),
 \qquad
 \sigma(t)\longrightarrow\sigma_+\quad(t\to+\infty).
$$
To verify convergence in the quotient $L^s$ metric, rotate each profile so that its first jump is at $0$, and denote the rotated profile by $g_\sigma^\sharp$.  If $g_{\sigma_*}^\sharp$ is the endpoint profile obtained by merging coincident jumps, then preservation of the three constant values implies, for $1\leq s<\infty$,
$$
 \|g_\sigma^\sharp-g_{\sigma_*}^\sharp\|_{L^s}^s
 \leq(2B)^s\Bigl(
 |x(\sigma)-x(\sigma_*)|
 +|x(\sigma)+y(\sigma)-x(\sigma_*)-y(\sigma_*)|
 \Bigr)\longrightarrow0.
$$
At an edge, two adjacent jump points coalesce and the corresponding jump sizes add, producing a two-jump profile. At a vertex, all jump points coincide and their total jump is zero, leaving a constant.  The strict monotonicity of $\sigma(t)$ also excludes periodic three-jump orbits.

For the final assertion, let $g$ denote the solution through $g_0$, set $B_0:=\|g_0\|_\infty$, and let $\bar g$ be an arbitrary complete solution obtained as a limit of time translates of $g$ along $t_n\to+\infty$ or $t_n\to-\infty$. Under $K=0$ and $M_+=M_-=1$, we have $N_+=N_-=3$.  The convergence in \eqref{eq: 2.32}, the relevant convergence in \eqref{eq: 1.24}--\eqref{eq: 1.25}, and the closedness of $\mathcal P_3(B_0)$ from \cref{lem: 2.2} imply that
$$
 \dist_{L^s}\bigl(\bar g(0),\mathcal P_3(B_0)\bigr)
 \leq \|\bar g(0)-g(t_n)\|_{L^s}
 +\dist_{L^s}\bigl(g(t_n),\mathcal P_3(B_0)\bigr)
 \longrightarrow0.
$$
Thus $\bar g(0)\in\mathcal P_3(B_0)$, and hence
$$
 \bar g(\tau)=S_\tau\bar g(0)
 \in\mathcal P_3(B_0),
 \qquad \tau\in\mathbb R.
$$
The orbit is therefore covered by the constant, two-jump, and genuine three-jump alternatives established above.
\end{proof}

\section*{Declarations}
\noindent\textbf{Acknowledgement}
D. Cao and J. Fan were supported by the National Key R\&D Program of China (2023YFA1010001) and the NNSF of China (Grant No.~12371212). G. Qin was supported by the National Key R\&D Program of China (Grant 2025YFA1018400) and the NNSF of China (Grant 12471190). The authors are grateful to Professor Ryan Murray for helpful discussions and comments that improved the presentation of this paper. OpenAI models were used for language editing and manuscript preparation. The mathematical ideas and calculations were developed by the authors, who take full responsibility for the manuscript.

\bigskip

\noindent\textbf{Author contributions.}
All authors contributed equally.

\bigskip

\noindent\textbf{Conflict of interest.}
On behalf of all authors, the corresponding author states that there is no conflict of interest.

\bigskip

\noindent\textbf{Data availability.}
Data availability is not applicable to this article, as no datasets were generated or analysed during the current study.

\end{document}